\documentclass[11pt, oneside]{article}   	
\usepackage{geometry}                		
\geometry{letterpaper}                   		
\usepackage{graphicx}				
								
\usepackage[all]{xy}
\CompileMatrices

\usepackage{amssymb}
\usepackage{amsmath}
\usepackage{stmaryrd}
\usepackage{amsthm}
\usepackage{tikz-cd}
\usepackage{extarrows}
\usepackage[mathscr]{euscript}
\usepackage{mathrsfs} 
\usepackage{verbatim}
\usepackage[OT2,T1]{fontenc}
\DeclareSymbolFont{cyrletters}{OT2}{wncyr}{m}{n}
\DeclareMathSymbol{\Sha}{\mathalpha}{cyrletters}{"58}
\DeclareMathSymbol{\Che}{\mathalpha}{cyrletters}{"51}

\usepackage{calligra}
\usepackage{mathrsfs}
\newcommand{\calHom}{\mathscr{H}\mathit{om}}
\newcommand{\calMor}{\mathscr{M}\mathit{or}}
\newcommand{\calEnd}{\mathscr{E}\mathit{nd}}

\newcommand{\Ga}{{\mathbf{G}}_{\rm{a}}}
\newcommand{\Gm}{{\mathbf{G}}_{\rm{m}}}

\DeclareMathOperator{\perf}{perf}

\DeclareMathOperator{\Pic}{Pic}
\DeclareMathOperator{\Jac}{Jac}

\DeclareMathOperator{\Hom}{Hom}
\DeclareMathOperator{\End}{End}
\DeclareMathOperator{\Mor}{Mor}

\DeclareMathOperator{\R}{R}

\DeclareMathOperator{\im}{im}

\newcommand*{\Z}{\ensuremath{\mathbf{Z}}}                        
\newcommand*{\A}{\ensuremath{\mathbf{A}}}                        
\renewcommand*{\P}{\ensuremath{\mathbf{P}}}                        
\newcommand*{\calO}{\mathcal{O}}                                  
\newcommand*{\address}{Einstein Institute of Mathematics, The Hebrew University of Jerusalem, Edmond J. Safra Campus, 91904, Jerusalem, Israel}
\newcommand*{\email}{zev.rosengarten@mail.huji.ac.il}

\usepackage{rotating}

\usepackage{bm}

\numberwithin{equation}{section}

\newtheorem{theorem}{Theorem}[section]

\newtheorem{lemma}[theorem]{Lemma}
\newtheorem{proposition}[theorem]{Proposition}
\newtheorem{corollary}[theorem]{Corollary}

\theoremstyle{definition}
  \newtheorem{definition}[theorem]{Definition}
  
  \theoremstyle{remark}
  \newtheorem{example}[theorem]{Example}

\theoremstyle{remark}
  \newtheorem{remark}[theorem]{Remark}

\usepackage[OT2,T1]{fontenc}

\tikzset{commutative diagrams/.cd,
mysymbol/.style={start anchor=center,end anchor=center,draw=none}
}

\usepackage{stmaryrd}
\usepackage{hyperref}

\title{\textbf{MODULI SPACES OF MORPHISMS INTO SOLVABLE ALGEBRAIC GROUPS}}
\author{Zev Rosengarten \thanks{While completing this work, the author was supported by a Zuckerman Postdoctoral Scholarship. \newline
MSC 2010: 14L10, 14L15, 14L17, 20G15, . \newline
Keywords: Algebraic Groups, Moduli Spaces. \newline
}}

\date{}
\begin{document}
\maketitle

\begin{abstract}
We construct (in significant generality) moduli spaces representing (on the category of geometrically reduced schemes over the base field) the functor of morphisms from a scheme into a solvable algebraic group.
\end{abstract}

\tableofcontents{}

\section{Introduction}

Moduli spaces form a fundamental tool and object of study in modern and classical algebraic geometry. One important example is the moduli space of morphisms between a pair of schemes. Such moduli spaces often exist when the source scheme is projective; see, for example, \cite[\S5.6, Th.\,5.23]{fgaexplained}. In general, however, for morphisms between affine schemes one cannot expect such representability results. The purpose of the present paper is to nevertheless construct such schemes of morphisms in significant generality when the target is a solvable algebraic group over a field.

To set the stage, let us recall that a smooth connected unipotent group $U$ is said to be {\em $k$-split} (or just split when $k$ is clear from the context) if it admits a $k$-group scheme filtration {\em over $k$}
\[
1 = U_0 \trianglelefteq U_1 \trianglelefteq \dots \trianglelefteq U_n = U
\]
with $U_{i+1}/U_i \simeq \Ga$ for $0 \leq i < n$. As opposed to this, there is the notion of a wound unipotent group introduced by Jacques Tits \cite{tits}: a smooth connected unipotent $k$-group $U$ is said to be {\em $k$-wound} (or just wound when $k$ is clear from the context) if any $k$-morphism $\A^1_k \rightarrow U$ from the affine line to $U$ is the constant morphism to a $k$-point of $U$ \cite[Def.\,B.2.1]{cgp}. This is (non-obviously) equivalent to requiring that there be no non-constant $k$-homomorphism $\Ga \rightarrow U$. Indeed, this property follows from woundness because $\Ga \simeq \A^1$ as $k$-schemes, and conversely, it also implies woundness by \cite[Prop.\,B.3.2]{cgp}. Woundness is also equivalent to requiring that $U$ not contain a $k$-subgroup scheme $k$-isomorphic to $\Ga$, as once again follows from \cite[Prop.\,B.3.2]{cgp}.

Woundness, which may be viewed as the analogue for unipotent groups of anisotropicity for tori (while splitness for unipotent groups is the analogue of splitness for tori), nevertheless behaves in the opposite manner to anisotropicity with regard to field extensions, as we now explain. A $k$-torus becomes split over some finite separable extension (equivalently, every torus over a separably closed field is split), while a smooth connected unipotent group becomes split over a finite {\em purely inseparable} extension (equivalently, every smooth connected unipotent group over a perfect field is split) \cite[Cor.\,B.2.7]{cgp}. Similarly, for a finite purely inseparable extension $k'/k$, a $k$-torus is anisotropic over $k$ if and only if it is so over $k'$, while for a (not necessarily algebraic) separable extension $k''/k$, a smooth connected unipotent $k$-group is wound over $k$ if and only if it is so over $k''$ \cite[Prop.\,B.3.2]{cgp}. Note in particular that the only wound unipotent group over a perfect field is the trivial group, so  woundness is only an interesting notion over imperfect fields (and in particular is uninteresting in characteristic $0$).

Furthermore, woundness enjoys certain permanence properties. It is clearly inherited by smooth connected subgroups, and it is also inherited by extensions: an extension of wound unipotent groups is still wound unipotent, as follows, for example, from the formulation in terms of non-existence of nonzero $k$-homomorphisms from $\Ga$. It is {\em not}, however, inherited by quotients. In fact, every smooth connected unipotent $k$-group $U$ admits an isogeny onto a split group as follows. The group $U_{k^{1/p^n}}$ is split for some $n \geq 0$ (because $U$ splits over a perfect closure of $k$). Let $U^{(p^n)}$ denote the $n$-fold Frobenius twist of $U$ over $k$. We have the following commutative diagram:
\[
\begin{tikzcd}
U^{(p^n)} \arrow{d} \arrow{r}{\sim} \arrow[dr, phantom, "\square"] & U_{k^{1/p^n}} \arrow[dr, phantom, "\square"] \arrow{d} \arrow{r} & U \arrow{d} \\
{\rm{Spec}}(k) \arrow{r}{\sim} & {\rm{Spec}}(k^{1/p^n}) \arrow{r} & {\rm{Spec}}(k) 
\end{tikzcd}
\]
The vertical arrows are the structure maps defining the $k$-scheme structures on $U$ and $U^{(p^n)}$ and the $k^{1/p^n}$-structure on $U_{k^{1/p^n}}$, the first arrow in the bottom row is that induced by the isomorphism $k^{1/p^n} \xrightarrow{\sim} k$ which sends $x$ to $x^{p^n}$, and the second arrow is induced by the inclusion $k \hookrightarrow k^{1/p^n}$. The second square is Cartesian by definition, as is the large outer square. Therefore, so too is the first square. This diagram shows that the $k^{1/p^n}$-splitness of $U_{k^{1/p^n}}$ implies the $k$-splitness of $U^{(p^n)}$. Therefore, the $n$-fold Frobenius map $U \rightarrow U^{(p^n)}$ defines a $k$-isogeny from $U$ onto a split unipotent group.

As we explained previously, woundness is only an interesting notion over imperfect fields. We now give examples which show that, over every imperfect field, the collection of wound unipotent groups is indeed interesting (and in particular, nontrivial).

\begin{example}(\cite[Ex.\,B.1.1]{cgp})
\label{exofwoundgp}
Let $k$ be an imperfect field of characteristic $p$, let $a \in k- k^p$, and consider the subgroup $W_a$ of $\Ga^2 = \Ga \times \Ga$ defined by the following equation:
\[
X + X^p + aY^p = 0.
\]
One may check that the maps $(X, Y) \rightarrow X + a^{1/p}Y$, $T \mapsto (-T^p, a^{-1/p}(T + T^p))$ define mutually inverse isomorphisms between $W_a$ and $\Ga$ over the purely inseparable extension $k(a^{1/p})$. In particular, $W_a$ is smooth connected unipotent. We claim that it is wound. One way to see this is to note that the projective closure $\overline{W}_a$ of $W_a$, defined in $\P^2_k$ by the equation
\[
XZ^{p-1} + X^p + aY^p = 0,
\]
is regular, hence is the regular completion of the affine curve $W_a$. If $W_a$ were not wound, then it would contain, hence be $k$-isomorphic to, $\Ga$. But the regular completion of $\Ga$ is $\P^1_k$, which has rational point at infinity, while the unique point of $\overline{W}_a \backslash W_a$ only becomes rational over $k(a^{1/p})$.
\end{example}

\begin{example}
Here we give an example -- due to Ofer Gabber \cite[Ex.\,2.10]{conradsolvable} -- over any imperfect field of a {\em non-commutative} wound unipotent group. Once again, let $k$ be an imperfect field and let $a \in k - k^p$. Consider the $k$-group $V_a$ defined by the following equation:
\[
X^{p^2} - X + aY^{p^2} = 0.
\]
One may check that the $k$-group $V_a$ becomes isomorphic to $\Ga$ over the purely inseparable extension $k(a^{1/p^2})$. One may also check that the projective closure of $V_a$ is regular with unique point at infinity which becomes rational over $k(a^{1/p^2})$ (exercise!). Therefore, $V_a$ is wound unipotent.

Consider the following nonzero alternating bi-additive map $h: V_a \times V_a \rightarrow W_a$, where $W_a$ is the wound unipotent group from Example \ref{exofwoundgp}:
\[
h\left( (x, y), (x', y')\right) := (xx'^p-x^px', xy'^p-x'y^p).
\]
We let $U_a := W_a \times V_a$ as $k$-schemes, with group law
\[
(w, v)\cdot(w', v') := (w + w' + h(v, v'), v + v').
\]
This defines a group law with identity $(0, 0)$ and inverse $(w, v)^{-1} = (-w, -v)$, and projection onto $V_a$ is a surjective group homomorphism with kernel identified with $W_a$ via the map $w \mapsto (w, 0)$. Further, if $p > 2$ then $U_a$ is non-commutative. It is also wound unipotent because $V_a$ and $W_a$ are. Gabber also gives examples when $p = 2$, but the construction is somewhat more complicated. We refer the reader to \cite[Ex.\,2.10]{conradsolvable}.
\end{example}

This paper is mainly concerned with the existence of moduli spaces representing the functor $\calMor(X, U)$ which sends a $k$-scheme $T$ to the group of $T$-morphisms $X_T \rightarrow U_T$. As is discussed in Appendix \ref{explanationofhypsection}, in order for there to be any hope of representing this functor, we must first of all restrict the underlying category upon which the functor $\calMor(X, U)$ is defined, from the category of all $k$-schemes to that of all geometrically reduced $k$-schemes, and in addition we must impose certain restrictions upon $X$ and $U$: that $X$ be geometrically reduced, and that $U$ not contain a copy of $\Ga$ -- that is, $U$ must be wound. Our main theorem in fact deals with groups somewhat more general than wound unipotent ones, but the wound unipotent case is the main difficulty and will occupy most of our effort in this paper.

\begin{remark}
Before stating the main result, we make a definition in order to eliminate smoothness and connectedness assumptions on the groups we deal with. For a group sheaf $\mathscr{F}$ on the fppf site of some scheme, we define the $n$th derived subsheaf $\mathscr{D}^n\mathscr{F}$ inductively by the formula $\mathscr{D}^0\mathscr{F} := \mathscr{F}$, and for $n \geq 0$, $\mathscr{D}^{n+1}\mathscr{F} := [\mathscr{D}^n\mathscr{F}, \mathscr{D}^n\mathscr{F}]$, where, for subsheaves $\mathscr{G}, \mathscr{G}' \subset \mathscr{F}$, the sheaf $[\mathscr{G}, \mathscr{G}']$ is the one generated by $[\mathscr{G}(R), \mathscr{G}(R')]$ for all $R$ in the site. We say that $\mathscr{F}$ is solvable if $\mathscr{D}^n\mathscr{F} = 1$ for some $n \geq 0$. When $\mathscr{F} = G$ for a smooth connected $k$-group scheme $G$, then the sheaves $\mathscr{D}^nG$ are represented by smooth connected $k$-subgroups schemes of $G$.
\end{remark}

The main result of this paper is the following:

\begin{theorem}
\label{maintheorem}
For a field $k$, a geometrically reduced $k$-scheme $X$ of finite type, and a solvable $k$-group scheme $G$ of finite type not containing a $k$-subgroup scheme $k$-isomorphic to $\Ga$, let $$\calMor(X, G)\colon \{\mbox{$k$-schemes}\} \rightarrow \{\mbox{groups}\}$$ denote the functor defined by the formula $T \mapsto \Mor_T(X_T, G_T)$. Then there is a unique subfunctor $\calMor(X, G)^+ \subset \calMor(X, G)$ with the following two properties:
\begin{itemize}
\item[(i)] The inclusion $\calMor(X, G)^+(T) \subset \calMor(X, G)(T)$ is an equality for all geometrically reduced $k$-schemes $T$.
\item[(ii)] The functor $\calMor(X, G)^+$ is represented by a smooth $k$-group scheme.
\end{itemize}
Furthermore, letting $\calMor(X, G)^+$ also denote the scheme in (ii), then the \'etale component group of $\calMor(X, G)^+$ has finitely-generated group of $k_s$-points. If $G = U$ is wound unipotent, then $\calMor(X, U)^+$ is a smooth, finite type, unipotent $k$-group scheme with wound identity component.
\end{theorem}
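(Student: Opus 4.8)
The uniqueness is the soft part. Any smooth $k$-scheme is geometrically reduced, so if $(S_1, F_1)$ and $(S_2, F_2)$ are two subfunctors satisfying (i) and (ii), then for each $i$ the representing scheme $S_i$ is itself geometrically reduced, whence $F_i(S_j) = \calMor(X,G)(S_j) = F_j(S_j)$ for all $i,j$. Tracking the universal elements through these identifications produces a canonical pair of mutually inverse morphisms $S_1 \rightleftarrows S_2$ compatible with the inclusions into $\calMor(X,G)$, so $F_1 = F_2$ and only existence remains. For existence I would run a dévissage on the derived length of $G$, using the filtration by the derived subsheaves $\mathscr{D}^n G$, whose successive quotients are commutative. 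For a normal extension $1 \to G' \to G \to G'' \to 1$ the morphism functor $\calMor(X,G) \to \calMor(X,G'')$ is surjective fppf-locally on $X$ with fibres that are pseudo-torsors under $\calMor(X,G')$, so representability by a smooth group scheme, and the asserted control of the component group, should propagate from the commutative building blocks once one checks that the relevant image and torsor remain representable after passing to the ``$+$'' functors. I expect this extension step to be essentially formal; matching the paper's own assessment, the real content lies in the commutative, and ultimately wound unipotent, case.

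For commutative $G$ not containing $\Ga$ I would split off the maximal subgroup of multiplicative type and treat the pieces separately. Morphisms into a torus are governed by units, $\calMor(X,\Gm)(T) = \Gamma(X_T,\calO_{X_T}^\times)$, and for geometrically reduced $X$ of finite type Rosenlicht's theorem bounds $\calO(X_T)^\times$ modulo $\calO(T)^\times$ by a finitely generated abelian group, locally constant in $T$. This realizes $\calMor(X,\Gm)^+$ as an extension of a finitely generated \'etale group by a torus, which simultaneously accounts for the smooth (and, in general, non--finite-type) representability and for the finite generation of the $k_s$-points of the \'etale component group; finite multiplicative-type and finite \'etale factors contribute only finite component groups, and an extension of finitely generated groups by finitely generated groups is finitely generated. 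The decisive remaining case is that of a wound unipotent group $U$.

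For wound unipotent $U$ the naive functor is far from being representable by a finite-type scheme -- already $\calMor(X,\Ga)$ is the non-representable functor $T \mapsto \calO(X)\otimes_k\calO(T)$ -- so the task is to extract from $\calMor(X,U)$ a finite-type smooth group scheme capturing exactly the geometrically reduced points. I would first realize $\calMor(X,U)$ as an affine, a priori infinite-type, $k$-group scheme by choosing a closed embedding of $U$ into some $\Ga^n$ cut out by additive polynomials and intersecting the resulting conditions inside $\calMor(X,\Ga^n)$. The heart of the matter is a finiteness bound: woundness must force the geometrically reduced solutions of these additive equations to span only a finite-dimensional space of functions on $X$. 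I expect to prove this by a dévissage of $U$ -- via its descending central series and Frobenius kernels, following the structure theory of \cite{cgp} -- down to commutative $p$-torsion wound groups, where the $p$-polynomial relations (of the shape exhibited by $W_a$ and $V_a$) can be analyzed directly on the geometrically reduced ring $\calO(X)$. Granting this bound, I would pass to the maximal geometrically reduced, equivalently smooth, closed subgroup scheme -- a smoothening -- to define $\calMor(X,U)^+$; by construction it is a smooth finite-type $k$-group scheme satisfying (i) and (ii), and it is unipotent because it embeds into the unipotent group functor $\calMor(X,\mathbb{U}_n)$ of upper-triangular unipotent matrices. This finiteness-plus-smoothness bound in the wound case is exactly the main obstacle; everything else is dévissage.

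Finally I would check that the identity component of $\calMor(X,U)^+$ is wound. If not, it would contain a copy of $\Ga$, giving a nontrivial $k$-homomorphism $\Ga \to \calMor(X,U)$, i.e. a $k$-morphism $\Phi\colon \A^1_k \times_k X \to U$ with $\Phi(0,\cdot) = e$ such that $s \mapsto \Phi(s,x)$ is a homomorphism $\Ga \to U$ over each residue field. Evaluating at the points $x \in X(k_s)$, which are dense in $X_{k_s}$ because $X$ is geometrically reduced (hence generically smooth), and using that $U_{k_s}$ remains wound since $k_s/k$ is separable, each such homomorphism $\Ga_{k_s} \to U_{k_s}$ is trivial; thus $\Phi = e$ on a dense set, so $\Phi = e$ and the copy of $\Ga$ was trivial, a contradiction. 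This confirms that for wound unipotent $U$ the scheme $\calMor(X,U)^+$ is smooth, of finite type, unipotent, with wound identity component, completing the proof.
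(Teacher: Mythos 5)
Your skeleton gets several things right — the Yoneda uniqueness argument, the smoothening step (passing to the maximal geometrically reduced/smooth closed subgroup scheme), and the woundness-of-the-identity-component argument via density of $X(k_s)$ are all essentially the paper's — but there are two genuine gaps. The first and most serious is in the wound unipotent case. You correctly isolate the key finiteness statement (woundness must force the geometrically reduced solutions of the defining additive equations to span a finite-dimensional space of functions on $X$), but you then dismiss it: ``can be analyzed directly on the geometrically reduced ring $\calO(X)$.'' That statement \emph{is} the main theorem of the paper (Proposition \ref{boundedness}), and no direct algebraic analysis is offered or known. The paper's proof requires the Albanese property of generalized Jacobians from geometric class field theory (Theorem \ref{albanese}), the finite generation of $\Hom(G,\Ga)$ as an $\End(\Ga)$-module (Proposition \ref{homfg}), a leading-term/pole-order analysis at the boundary of the regular compactification, and a Bertini curve-fibration argument (Lemma \ref{curvefibrations}) to reduce from arbitrary dimension to curves. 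Moreover, your planned d\'evissage of $U$ down to the commutative $p$-torsion case cannot run on the ``kernel is finite-dimensional'' statement alone (the case $V=0$): when one lifts a bounded family through an extension $1 \to U' \to U \to U'' \to 1$ (via a pushout along $U' \hookrightarrow \Ga^n$ and a torsor splitting over the affine $U''$), one needs that the preimage under $F_X$ of an \emph{arbitrary} finite-dimensional space $V$ of functions is bounded; this stronger statement is exactly what Proposition \ref{boundedness} provides and what the induction in Proposition \ref{mor(x,U)bounded} consumes. Your proposal never addresses it.

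The second gap is the extension step of your derived-length d\'evissage, which you call ``essentially formal'' but which fails as stated. For $1 \to \mathscr{D}G \to G \to G^{\ab} \to 1$ the map $\calMor(X,G) \to \calMor(X,G^{\ab})$ is \emph{not} surjective, even fppf-locally on $T$: the obstruction to lifting a $T$-map $X_T \to G^{\ab}_T$ is a torsor class over $X_T$ under $\mathscr{D}G$, which does not die under localization on $T$. So you must identify the image subsheaf and show that it, and the pseudo-torsor over it, are representable; but the source is not yet known to be a scheme, and a subsheaf of a \emph{positive-dimensional} smooth group scheme on the geometrically reduced \'etale site need not be representable. The paper's d\'evissage is chosen precisely to avoid this: by Chevalley plus Frobenius twisting (Proposition \ref{extsemiabalmunip}), $G$ is an extension of a semiabelian variety $S$ by an almost unipotent group, and pointed maps into $S$ form an \'etale scheme with finitely generated group of $k_s$-points (Theorem \ref{pointedmorsemiabelianvar}, via Rosenlicht); Lemma \ref{codomainconstant} then shows an extension of (a subsheaf of) an \'etale group by a smooth affine group scheme on this site is representable. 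The \'etaleness (rigidity) of the quotient moduli is what makes both the image problem and the descent effective, and it is unavailable for your quotient $G^{\ab}$, whose moduli has positive-dimensional wound unipotent part. Relatedly, your inventory of commutative building blocks (tori, finite multiplicative type, finite \'etale, wound unipotent) omits abelian varieties, which are solvable and do occur; they are handled by the same Rosenlicht-based rigidity, not by splitting off multiplicative type.
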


The uniqueness of $\calMor(X, G)^+$ follows from (i) and (ii), together with Yoneda's Lemma applied to the category of geometrically reduced $k$-schemes. For an explanation of the hypotheses in Theorem \ref{maintheorem}, see Appendix \ref{explanationofhypsection}. Nontrivial examples of the groups $\calMor(X, U)^+$ described in Theorem \ref{maintheorem} -- or rather of the subgroup of homomorphisms when $X$ is itself an algebraic group -- will be given in Example \ref{Homschemeexample} and Theorem \ref{modulifromcurvesequalsfromjacobians}.

\begin{remark}
\label{mainlyinterestingforperfectfields}
When $k$ is perfect, Chevalley's Theorem says that any smooth connected $k$-group $G$ may be written as an extension
\[
1 \longrightarrow L \longrightarrow G \longrightarrow A \longrightarrow 1
\]
of an abelian variety by a smooth connected affine $k$-group. If $G$ is solvable, then -- because $k$ is perfect -- we have an isomorphism $L \simeq T \ltimes U$ for some $k$-torus $T$ and some split unipotent $k$-group $U$. In particular, if $G$ does not contain a copy of $\Ga$, then $L = T$ is a torus, so $G$ is a semiabelian variety. We therefore see that when $k$ is perfect, Theorem \ref{maintheorem} only applies to semiabelian varieties. As we shall see, however, the most interesting and difficult case of Theorem \ref{maintheorem} is when $G = U$ is wound unipotent, so the main case of interest in Theorem \ref{maintheorem} is when $k$ is imperfect. 
\end{remark}

\begin{remark}
One could consider the restriction of the functor $\calMor(X, G)$ to the category of geometrically reduced $k$-schemes and just assert that this functor is represented by a finite type (necessarily smooth) group scheme. However, the stronger assertion of Theorem \ref{maintheorem}, which describes $\calMor(X, G)^+$ as a subfunctor of $\calMor(X, G)$, is useful because of the deficiencies of the category of geometrically reduced $k$-schemes, as we now explain.

There are various natural assertions which one would wish to be true for the group schemes $\calMor(X, G)^+$. For example, given an inclusion $G \hookrightarrow G'$ of group schemes satisfying the hypotheses of Theorem \ref{maintheorem}, one would hope that the natural map $\phi\colon \calMor(X, G)^+ \rightarrow \calMor(X, G')^+$ induced by the inclusion of the corresponding functors is an inclusion of $k$-group schemes. This does not follow, however, just from the assertion that the map of functors {\em on the category of geometrically reduced $k$-schemes} is an inclusion, because the kernel of a map of smooth group schemes need not be smooth. All we could conclude is that $\ker(\phi)$ is a totally non-smooth group scheme -- that is, that this kernel admits no nonconstant map from a geometrically reduced $k$-scheme (or equivalently, that it has trivial group of $k_s$-points).

But because we know that the functors $\calMor(X, G)^+$ and $\calMor(X, G')^+$ are subfunctors of the functors $\calMor(X, G)$ and $\calMor(X, G')$ on the more robust category of {\em all} $k$-schemes, the commutative diagram
\[
\begin{tikzcd}
\calMor(X, G)^+ \arrow{r}{\phi} \arrow[d, phantom, "\bigcap"] & \calMor(X, G')^+ \arrow[d, phantom, "\bigcap"] \\
\calMor(X, G) \arrow[r, hookrightarrow] & \calMor(X, G')
\end{tikzcd}
\]
shows that $\phi$ is an inclusion of functors on the category of {\em all} $k$-schemes, hence the corresponding map of group schemes is a $k$-group scheme inclusion. There are various assertions of a similar nature that are not at all clear if one merely regards $\calMor(X, G)^+$ as a functor on the category of geometrically reduced $k$-schemes, but which become obvious once one regards $\calMor(X, G)^+$ as a subfunctor of the functor $\calMor(X, G)$ defined on the category of {\em all} $k$-schemes.
\end{remark}

The main work of this paper will lie in proving Theorem \ref{maintheorem} when $G = U$ is wound unipotent. As we explain in Appendix \ref{explanationofhypsection}, the space of morphisms from $X$ into $\Ga$, for example, is usually not representable without imposing some boundedness condition on the morphisms under consideration. The key point in proving Theorem \ref{maintheorem} will therefore be to show that the space of $k_s$-morphisms from $X$ into $U$ is automatically bounded in a suitable sense. (This will turn out to suffice, essentially because a geometrically reduced $k$-scheme of finite type has Zariski dense set of $k_s$-points.) In order to explain the key result that makes everything work, we must first recall the notion of a $p$-polynomial and some attendant facts related to these objects.

A $p$-polynomial over a ring $R$ is a polynomial $F \in R[X_1, \dots, X_n]$ of the form $$F(X_1, \dots, X_n) = \sum_{i \in I \subset \{1, \dots, n\}} \sum_{j = 0}^{N_i} r_{i,\,j}X_i^{p^j}$$ for some subset $I \subset \{1, \dots, n\}$, some $N_i \geq 0$, and some $r_{i,\,N_i} \neq 0$. The $p$-polynomial $\sum_{i\in I} r_{i,\,N_i}X_i^{p^{N_i}}$ is called the {\em principal part} of $F$. The relevance of $p$-polynomials to the study of unipotent groups is that, over a field $k$ of characteristic $p$, {\em every} smooth commutative $p$-torsion $k$-group $U$ is isomorphic to the vanishing locus inside $\Ga^n$ ($n := {\rm{dim}}(U) + 1$) of some smooth $p$-polynomial $F \in k[X_1, \dots, X_n]$ \cite[Prop.\,B.1.13]{cgp}. (Smooth means that the hypersurface defined by $F$ is smooth, or equivalently, that $F$ has nonvanishing linear part.)

Woundness of such $U$ may also be detected at the level of $p$-polynomials. Indeed, a smooth connected commutative $p$-torsion (and therefore unipotent) $k$-group is $k$-wound if and only if it is isomorphic to the vanishing locus of some $p$-polynomial whose principal part has no nontrivial zeroes over $k$ (that is, no zeroes other than $(0, \dots, 0) \in k^n$) \cite[Lem.\,B.1.13, B.1.7]{cgp}. (Not every $p$-polynomial defining a wound group $U$ will have leading part with no nontrivial zeroes, just some defining polynomial will.) Using this fact, the key boundedness result we will show that will allow us to prove Theorem \ref{maintheorem} when $G = U$ is wound unipotent is the following (Proposition \ref{boundedness}): 

\begin{proposition}
\label{boundednessppolynomials}
For any geometrically reduced $k$-scheme $X$ of finite type, and any $p$-polynomial $F \in k[X_1, \dots, X_n]$ whose principal part has no nontrivial zeroes over $k$, consider the map $F_X\colon {\rm{H}}^0(X_{k_s}, \calO_{X_{k_s}})^n \rightarrow {\rm{H}}^0(X_{k_s}, \calO_{X_{k_s}})$ given by $(s_1, \dots, s_n) \mapsto F(s_1, \dots, s_n)$. Then for any finite-dimensional $k$-vector subspace $V \subset {\rm{H}}^0(X_{k_s}, \calO_{X_{k_s}})$, there is a finite-dimensional $k$-vector subspace $W \subset {\rm{H}}^0(X_{k_s}, \calO_{X_{k_s}})$ such that $F_X^{-1}(V) \subset W^n$.
\end{proposition}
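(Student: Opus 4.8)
The plan is to reduce the statement to a bound on a single numerical invariant — a notion of degree — of the sections $s_i$, and then to establish that bound by a global linear-algebra argument that extracts each $s_i$ as a ``coordinate'' of $F(s_1,\dots,s_n)$. First I would fix a filtration of $A := {\rm H}^0(X_{k_s},\calO_{X_{k_s}})$ by finite-dimensional $k$-subspaces $A_{\le 0}\subset A_{\le 1}\subset\cdots$ with $\bigcup_d A_{\le d}=A$, arising from a projective compactification of $X$ (or a finite map to affine space); write $\deg(s)$ for the least $d$ with $s\in A_{\le d}$. Since a finite-dimensional $V$ is contained in some $A_{\le D}$ and, conversely, each $A_{\le d}$ is finite-dimensional, the Proposition is equivalent to the assertion that $F(s)\in A_{\le D}$ forces $\max_i \deg(s_i)$ to be bounded in terms of $D$ alone. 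Note that the no-nontrivial-zero hypothesis already forces every variable to occur (an unused coordinate axis would furnish a nontrivial zero of the principal part), so there are no free, unconstrained $s_i$ to worry about.

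The heart of the matter is the case where the principal part is homogeneous, $P=\sum_{i=1}^n c_i X_i^{p^N}$ with a single exponent. Here one writes $P=L^{p^N}$, where $L=\sum_i c_i^{1/p^N}X_i$ is a linear form over $k^{1/p^N}$, and the no-nontrivial-zero hypothesis is precisely the statement that the coefficients $c_i^{1/p^N}$ are linearly independent over $k$. Then $F(s)\in V$ forces $L(s)=\sum_i c_i^{1/p^N}s_i$ to lie in $\{y : y^{p^N}\in V\}$, a finite-dimensional $k$-space because $y\mapsto y^{p^N}$ is additive and injective on the reduced ring $A\otimes_k k^{1/p^N}$ and is semilinear over the isomorphism $k^{1/p^N}\xrightarrow{\sim}k$. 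Setting $M:=\sum_i k\,c_i^{1/p^N}\subset k^{1/p^N}$, linear independence gives a splitting $A\otimes_k M=\bigoplus_i A\cdot c_i^{1/p^N}$, under which $L(s)$ has coordinates exactly $(s_1,\dots,s_n)$; intersecting the finite-dimensional space above with $A\otimes_k M$ and projecting to each factor exhibits each $s_i$ inside a fixed finite-dimensional $W$. I regard this linear-algebra extraction, rather than any pole-order estimate, as the correct engine: it never needs to know that leading terms fail to cancel.

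For the general (inhomogeneous) principal part I would reduce to the homogeneous engine by peeling off Frobenius levels. Grouping the terms of $F$ by their exponent, and using that a sum of $p$-th powers is a $p$-th power, gives $F(s)=\ell_0(s)+\widetilde F(s)^{\,p}$, where $\ell_0(s)=\sum_i r_{i,0}s_i$ is $k$-linear and $\widetilde F$ is a $p$-polynomial over $k^{1/p}$ of strictly smaller top level; one then proceeds by induction on the top level $N$, using the degree filtration to keep the auxiliary sections $\widetilde F(s)$ and $\ell_0(s)$ under simultaneous control and, at the bottom, recovering a linear situation handled by the coordinate extraction above. Organizing the descent so that the anisotropy hypothesis is preserved — it lives over $k$, whereas each step introduces coefficients from the purely inseparable extension $k^{1/p}$ — is the bookkeeping one must get right.

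The main obstacle is inseparability at infinity. It is tempting to prove the degree bound by showing that the top-degree parts of the $c_i s_i^{p^{N_i}}$ cannot cancel; but this is \emph{false}. The leading coefficients of sections in $A$ lie in the residue fields at the points (or divisors) at infinity, and over $k_s$ these residue fields are purely inseparable — exactly the fields over which the anisotropy of the principal part is lost, this being the failure of woundness under inseparable extension. Indeed already for $U=W_a$ one finds high-degree sections whose leading terms cancel. What saves the argument is that, although the leading coefficients are inseparable, the sections themselves generate a field $\mathrm{Frac}(A)$ that is separable over $k_s$; the global coordinate extraction sees this separability while a pole-order estimate does not. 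To make the descent robust I would, when convenient, first replace $X$ by $X\times\A^1$ — harmless for the Proposition, since a finite-dimensional $W\subset {\rm H}^0((X\times\A^1)_{k_s},\calO)$ meets $A^n$ in a finite-dimensional space — and combine the separable-invariance of the no-nontrivial-zero property (equivalently, of woundness) with the separability of $\mathrm{Frac}(A)/k_s$ to rule out any genuinely unbounded family of solutions. Threading the inseparability of the leading coefficients through the separable structure of the sections is the step I expect to require the most care.
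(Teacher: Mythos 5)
Your diagnosis of the central obstacle is accurate (leading terms of sections really can cancel, because the residue fields at the boundary are purely inseparable over $k_s$ and the principal part acquires zeroes there), and your homogeneous-case engine is a genuinely different idea from the paper's. But the proposal has two gaps: one repairable, one fatal.

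The repairable one: your parenthetical claim that $W' := \{y \in A\otimes_k k^{1/p^N} : y^{p^N}\in V\}$ is ``a finite-dimensional $k$-space'' is false in general. Semilinearity over the isomorphism $k^{1/p^N}\xrightarrow{\sim} k$ plus reducedness bounds only $\dim_{k^{1/p^N}} W'$ by $\dim_k V$; $W'$ is a $k^{1/p^N}$-subspace, so as soon as $W'\neq 0$ and $[k^{1/p^N}:k]=\infty$ (which the Proposition permits), $W'$ is infinite-dimensional over $k$. Since a $k$-subspace of a finite-dimensional $k^{1/p^N}$-space can be infinite-dimensional over $k$, ``intersecting with $A\otimes_k M$ and projecting'' does not by itself confine the $s_i$ to a finite-dimensional space. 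This can be fixed: take a maximal $k^{1/p^N}$-linearly independent subset $y_1,\dots,y_m$ of $Z := W'\cap(A\otimes_k M)$ (so $m\le\dim_k V$), let $B\subset A$ be the $k$-span of the $M$-coordinates of the $y_j$; then $Z\subseteq (B\otimes_k k^{1/p^N})\cap(A\otimes_k M)=B\otimes_k M$, so every solution has coordinates in the finite-dimensional space $B$. With this extra step your homogeneous case does go through by pure linear algebra.

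The fatal one: the reduction of the general case to the homogeneous case is circular, and this general case is the entire point. From $F(s)=\ell_0(s)+\widetilde{F}(s)^p\in V$ you can bound $\widetilde{F}(s)$ only after bounding $\ell_0(s)$ --- but $\ell_0(s)$ is a linear combination of the very sections $s_i$ you are trying to bound; the degree filtration only yields $\widetilde{F}(s)^p\in A_{\le\max(D,d)}$ with $d=\max_i\deg(s_i)$, so every bound your engine produces depends on $d$ itself. Moreover the anisotropy hypothesis cannot survive the peeling: at the bottom of your induction the ``principal part'' is a linear form, and a nonzero linear form in $n\ge 2$ variables always has nontrivial zeroes, so there is no base case to land on. Note also that the case your engine handles excludes precisely the polynomials the Proposition exists for: a smooth positive-dimensional group $\{F=0\}$ requires $F$ to have nonzero linear part, and when the principal part is anisotropic (and $n\ge 2$) that linear part cannot lie in the principal part, so $F$ is never equal to its principal part in the intended applications. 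This is exactly where the paper injects nontrivial global input: Theorem \ref{albanese}, Lemma \ref{thickendivisor} and Proposition \ref{homfg} are used (in Proposition \ref{boundednessforcurves}) to show that every solution component is a $p$-polynomial in finitely many \emph{fixed} sections, and only after that rigidification does a leading-term argument become legitimate --- the leading coefficients are then $p^b$-th powers of fixed elements of the purely inseparable residue field, hence lie in $k$ for large $b$, and anisotropy over $k$ forbids cancellation; Lemma \ref{curvefibrations} then reduces higher dimension to curves. So your blanket rejection of pole-order estimates overshoots: the naive estimate fails for exactly the reason you give, but the paper's estimate is run only after the class-field-theory step, and your proposal contains no substitute for that step. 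Without an idea playing that role, the induction sketched in your third paragraph cannot be completed.
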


To see how Proposition \ref{boundednessppolynomials} may be used to prove the boundedness results we require in order to prove Theorem \ref{maintheorem}, consider the case $V = 0$. Then what we obtain is that the space of all morphisms from $X_{k_s}$ into the unipotent $k$-group $U_{k_s}$ which is the vanishing locus of $F$ has the property that the induced maps $X_{k_s} \rightarrow \mathbf{G}_{a,\,k_s}$ all are defined by global sections of $X$ lying in a finite-dimensional vector space. From this, it is not hard to show that $\calMor(X, U)^+$ may be obtained as the maximal smooth $k$-subgroup scheme of a closed subscheme of some vector group built from this finite-dimensional vector space. Thus the case $V = 0$ of Proposition \ref{boundednessppolynomials} already allows us to prove Theorem \ref{maintheorem} when $G = U$ is wound, commutative, and $p$-torsion. The more general case, dealing with arbitrary finite-dimensional vector spaces $V$ of global sections on $X$, is required in order to deal with arbitrary wound unipotent groups. See the proof of Proposition \ref{mor(x,U)bounded}.

Very roughly, the proof of Proposition \ref{boundednessppolynomials} proceeds as follows. Passing to a dense open subscheme allows us to assume that $X$ is smooth, and that $X$ admits a projective normal compactification $\overline{X}$. We then try to show that a bound on the order of the pole of a section $s \in {\rm{H}}^0(X, \calO_X)$ at some divisor $D \subset \overline{X}$ contained in $\overline{X} \backslash X$ implies a corresponding bound for the orders of the poles at $D$ of sections $s_1, \dots, s_n \in {\rm{H}}^0(X, \calO_X)$ such that $F(s_1, \dots, s_n) = s$.

We first treat the case of curves in \S \ref{curvessection}, where we may take $\overline{X}$ to be the regular compactification of $X$ (which is not generally smooth when $k$ is imperfect!). We then use geometric class field theory in order to prove the statement about orders of poles for the smooth curve $X$, by relating maps from $X$ into $\Ga$ to maps from suitable algebraic groups into $\Ga$; see Theorem \ref{albanese}. To deduce the general case from that of curves, we construct for any divisor $D$ as above a suitable curve fibration $f\colon X' \rightarrow \P^{n-1}$ ($n := {\rm{dim}}(X)$) for a projective scheme $X'$ that admits a birational morphism $\phi\colon X' \rightarrow X$ that is an isomorphism above the complement of a finite subscheme $Z$ of $X$. This fibration is chosen so that $D' := \overline{\phi^{-1}(D - Z)}$, the strict transform of $D$, is a horizontal divisor over $\P^{n-1}$; that is, it intersects the generic fiber of $f$; see Lemma \ref{curvefibrations}. Passing to the generic fiber then allows us to deduce the required boundedness result for orders of poles on $D$ from the case of curves (since the generic fiber is a curve over the function field of $\P^{n-1}$, and one may show that the principal part of $F$ admits no nontrivial zeroes over this function field, because it is separable over $k$).

The structure of this paper is as follows. In \S \ref{curvessection}, we prove the key boundedness result required to prove Theorem \ref{maintheorem} -- namely, Proposition \ref{boundednessppolynomials}. In \S \ref{boundedfamiliessection} we introduce the important notion of a bounded family of morphisms into a smooth connected unipotent group, and show that the space of all $k_s$-morphisms from a finite type geometrically reduced $k$-scheme into a wound unipotent group is bounded. In \S \ref{modulispacesunipsection}, we construct the moduli spaces described in Theorem \ref{maintheorem} when $G$ is wound unipotent. In \S \ref{constructiongmoduligeneralsection}, we prove Theorem \ref{maintheorem}. Finally, in \S \ref{moduliofhomssection}, we use the moduli spaces of morphisms into algebraic groups in order to construct moduli of homomorphisms between algebraic groups, which we then use to show that one of the main results of geometric class field theory remains true after base change by a geometrically reduced scheme; see Corollary \ref{albanesebase}.

\section{Preimages of spaces of functions under $p$-polynomials}
\label{curvessection}

The purpose of this section is to prove a certain ``boundedness'' result for spaces of functions on geometrically reduced schemes of finite type (Proposition \ref{boundedness}). This result will be the key to proving Theorem \ref{maintheorem} when $G$ is wound unipotent. The proof proceeds by first treating boundedness for curves, and then using a curve fibration lemma (Lemma \ref{curvefibrations}) to treat finite type geometrically reduced $k$-schemes of arbitrary dimension. The argument for curves depends upon geometric class field theory, and in particular the construction of suitable ``Albanese-type'' varieties for maps from smooth curves into commutative groups. Namely, a pointed map from a smooth curve $X$ into a commutative group factors uniquely through a suitable generalized Jacobian of $X$; see Theorem \ref{albanese}. We will use this to understand maps from $X$ into $\Ga$ -- that is, global sections of $X$.

We begin by recalling some facts from geometric class field theory that we shall require. Let $\overline{X}$ be a proper curve over a field $k$ (in our applications, $\overline{X}$ will be regular, though not necessarily smooth), and let $D \subset \overline{X}$ be a finite subscheme. Consider the functor $\{\mbox{$k$-schemes}\} \rightarrow \{\mbox{groups}\}$ which sends a $k$-scheme $T$ to the group of pairs $(\mathscr{L}, \phi)$, where $\mathscr{L} \in \Pic(\overline{X} \times T)$ is a line bundle of relative degree $0$ over $T$, and $\phi\colon \calO_{D_T} \xrightarrow{\sim} \mathscr{L}|_{D_T}$ is a trivialization of $\mathscr{L}$ along $D$. We denote by $\Jac_D(\overline{X})$ the fppf sheafification of this functor. 

\begin{remark}
Although we will never use this, we remark that sheafification is unnecessary if the map $\Gamma(\overline{X}, \calO_{\overline{X}}) \rightarrow \Gamma(D, \calO_D)$ is injective. This injectivity holds, for instance, if $\overline{X}$ is geometrically reduced and geometrically connected (and therefore $\Gamma(\overline{X}, \calO_{\overline{X}}) = k$) and $D \neq \emptyset$. To see why sheafification is unnecessary in this case, we first note that the restriction map on global sections $\Gamma(\overline{X}_T, \calO_{\overline{X}_T}) \rightarrow \Gamma(D_T, \calO_{D_T})$ is injective for any $k$-scheme $T$. Indeed, letting $f\colon \overline{X} \rightarrow {\rm{Spec}}(k)$, $g\colon D \rightarrow {\rm{Spec}}(k)$, and $h\colon T \rightarrow {\rm{Spec}}(k)$ denote the structure maps, we have that $(f_T)_*\calO_{\overline{X}_T} = h^*f_*\calO_{\overline{X}}$ and $(g_T)_*\calO_{D_T} = h^*g_*\calO_D$ because quasi-coherent cohomology commutes with flat base change. Once again using the $k$-flatness of $T$, we therefore see that the injectivity of $(f_T)_*\calO_{\overline{X}_T} \rightarrow (g_T)_*\calO_{D_T}$ (and therefore of the induced map on global sections) follows from that of the map $f_*\calO_{\overline{X}} \rightarrow g_*\calO_D$, and this last injectivity holds by hypothesis.

Now we claim that pairs $(\mathscr{L}, \phi)$ as in the definition of $\Jac_D(\overline{X})$ are rigid -- that is, they admit no nontrivial automorphisms (for the obvious notion of morphism of such a pair, i.e., a morphism of the corresponding line bundles that is compatible with the trivializations). Indeed, an automorphism of $\mathscr{L} \in \Pic(\overline{X}_T)$ is given by a unit $u \in \Gamma(\overline{X}_T, \calO_{\overline{X}_T})^{\times}$. The condition that it be compatible with the trivialization $\phi\colon \calO_{D_T} \xrightarrow{\sim} \mathscr{L}|_{D_T}$ is that $u|_{D_t} = 1$. The injectivity of global sections verified above then implies that $u = 1$ -- that is, the automorphism is the identity.

Finally, to see that sheafification is unnecessary -- i.e., that the presheaf of pairs $(\mathscr{L}, \phi)$ is already an fppf sheaf -- we note that for any fppf covering $\{T_i\}_{i \in I}$ of a $k$-scheme $T$, together with pairs $(\mathscr{L}_i, \phi_i)$ on each $T_i$ and isomorphisms of pairs on double intersections $T_i \times_T T_j$, these isomorphisms automatically satisfy the cocycle condition on triple intersections due to the rigidity proven above. Therefore, they define a descent datum on the $\mathscr{L}_i$, and the $\phi_i$ also descend to give a unique pair $(\mathscr{L}, \phi)$ of the desired sort on $T$.
\end{remark}

The natural map $\Jac_D(\overline{X}) \rightarrow \Jac(\overline{X})$ into the Jacobian of $\overline{X}$ which just forgets the trivialization $\phi$ is surjective as a map of fppf sheaves, since any line bundle -- locally on $T$ -- may be trivialized along the finite scheme $D$. Its kernel consists of pairs $(\calO_{\overline{X}_T}, u)$ with $u \in \Gamma(D_T, \calO_T)^{\times}$, up to isomorphism. Two such pairs $u_1$ and $u_2$ are isomorphic precisely when there is an automorphism of $\calO_{\overline{X}_T}$ mapping $u_1$ to $u_2$, or equivalently, when $u_1/u_2$ extends to an element of $\Gamma(\overline{X}_T, \calO_{\overline{X}_T})^{\times}$. Letting $A := \Gamma(\overline{X}, \calO_{\overline{X}})$, therefore, we obtain a canonical exact sequence
\[
0 \longrightarrow \R_{D/k}(\Gm)/i(\R_{A/k}(\Gm)) \longrightarrow \Jac_D(\overline{X}) \longrightarrow \Jac(\overline{X}) \longrightarrow 0,
\]
where the groups on the left are Weil restrictions of scalars, and $i \colon \R_{A/k}(\Gm) \rightarrow \R_{D/k}(\Gm)$ is the  $k$-homomorphism corresponding to the map $A \rightarrow \Gamma(D, \calO_D)$. In particular, $\Jac_D(\overline{X})$ is smooth. (Note that $i$ is not in general injective.)

If $x \in \overline{X}^{\rm{sm}}(k)$ is a $k$-point lying in the smooth locus of $\overline{X}$, then associated to $x$ one obtains in the usual manner a $k$-morphism $\overline{X}^{\rm{sm}} \rightarrow \Jac(\overline{X})$ by sending a point $y$ to the line bundle associated to the divisor $[y] - [x]$. Similarly, if $x \notin D$, then one obtains a map $i_x\colon \overline{X}^{\rm{sm}}\backslash D \rightarrow \Jac_D(\overline{X})$ via the map sending $y$ to the pair $(\calO([y]-[x]), \phi)$, where $\phi$ is the canonical trivialization along $D$ of the line bundle associated to the divisor $[y] - [x]$ which is disjoint from $D$. The main result from geometric class field theory that we require says that these maps have an Albanese type property with respect to maps from smooth curves into commutative algebraic groups.

\begin{theorem}[{\rm{\cite[\S 10.3, Thm.\,2]{neronmodels}}}]
\label{albanese}
Let $X$ be a smooth curve over a field $k$, with regular compactification $\overline{X}$, and boundary $D := \overline{X}\backslash X$, and let $x \in X(k)$. Then for any finite type commutative $k$-group scheme $G$, and any morphism of pointed $k$-schemes $f\colon (X, x) \rightarrow (G, 0)$, there exist a divisor $D' \subset \overline{X}$ with support $D$ and a unique $($for a given $D'$$)$ $k$-group scheme homomorphism $\psi\colon \Jac_{D'}(\overline{X}) \rightarrow G$ such that the following diagram commutes:
\[
\begin{tikzcd}
X \arrow{r}{i_x} \arrow{dr}[swap]{f} & \Jac_{D'}(\overline{X}) \arrow[d, dashrightarrow, "\exists! \psi"] \\
& G
\end{tikzcd}
\]
\end{theorem}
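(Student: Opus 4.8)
This is a form of the universal (Albanese) property of generalized Jacobians, and the plan is to establish it by the Rosenlicht--Serre method of local symbols and moduli, working over the function field $K := k(\overline{X})$. First I would reduce to the case that $G$ is smooth and connected. Since $X$ is connected (we may restrict to the component of $x$) and carries the $k$-point $x$, it is geometrically connected; as $f(x) = 0$, the image $f(X)$ is an irreducible subvariety through the origin, so $f(X) \subseteq G^0$ and we may take $G = G^0$. Because $X$ is geometrically reduced, $f$ then factors through the smallest subgroup scheme of $G$ containing $\im(f)$, which is smooth and connected, so I reduce to $G$ smooth connected commutative. (Over imperfect $k$ this dévissage must be carried out with care, following the relative treatment in the cited reference.)

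Next I would attach to $f$ the generic point $f_\eta \in G(K)$ and, following Rosenlicht, a local symbol $(f, g)_P \in G(\overline{k})$ for each closed point $P$ of $\overline{X}$ and each $g \in K^\times$. This symbol is additive in $g$ and in $f$, obeys the reciprocity law $\sum_P (f, g)_P = 0$, and vanishes whenever $f$ is regular at $P$; in particular it is supported on $D$, since $f$ is a morphism on all of $X = \overline{X}\backslash D$. Constructing this symbol and checking its axioms is the technical core: it proceeds by dévissage of $G$ through copies of $\Ga$, $\Gm$, and an abelian variety, where the symbol is given respectively by residues, tame symbols, and the classical local symbol on the abelian part. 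The decisive finiteness input (Rosenlicht's theorem) is then that there exists an effective divisor $D'$ with support $D$ such that $f$ admits $D'$ as a modulus, i.e. $(f, g)_P = 0$ whenever $\ord_P(g - 1) \geq \ord_P(D')$ for all $P$.

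Granting a modulus $D'$, I would produce $\psi$ as follows. A geometric point of $\Jac_{D'}(\overline{X})$ is represented by a degree-$0$ divisor class disjoint from $D$ together with a trivialization along $D'$, and $\Jac_{D'}(\overline{X})$ is, by the exact sequence exhibiting it as an extension of $\Jac(\overline{X})$ by $\R_{D'/k}(\Gm)/i(\R_{A/k}(\Gm))$, precisely the quotient of such divisors by principal divisors of functions congruent to $1$ modulo $D'$. Extending $f$ additively to divisors via $\sum_i n_i [y_i] \mapsto \sum_i n_i f(y_i)$, the modulus condition together with reciprocity is exactly what guarantees that this assignment kills the relevant principal divisors, so it descends to a homomorphism functorially in the test ring; using fppf descent and the explicit presentation above, this promotes to a $k$-group homomorphism $\psi\colon \Jac_{D'}(\overline{X}) \to G$ with $\psi \circ i_x = f$.

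For uniqueness (for the given $D'$), I would invoke the standard fact that $i_x(X)$ generates $\Jac_{D'}(\overline{X})$ as an algebraic group, so any two homomorphisms agreeing on $i_x(X)$ coincide. The main obstacle is squarely the construction of the local symbol and the existence of a modulus: this is where the structure theory of commutative algebraic groups and the finiteness underlying geometric class field theory genuinely enter, as opposed to the formal descent in the factorization step. I would therefore either develop this via the dévissage sketched above or, as the paper does, defer to the treatment in the cited reference.
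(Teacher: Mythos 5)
The paper supplies essentially no proof of Theorem \ref{albanese}: it quotes the result from \cite[\S 10.3, Thm.\,2]{neronmodels}, and its only added content is the removal of the smoothness hypothesis on $G$, which it does by observing that any $k$-morphism from the smooth scheme $X$ factors through the maximal smooth $k$-subgroup scheme $G^{\rm{sm}} \subset G$ \cite[Lem.\,C.4.1, Rem.\,C.4.2]{cgp}. Your proposal is correct in outline but differs in both respects. Where the paper argues, you argue differently: you replace $G$ by the subgroup scheme generated by $f(X)$, which is indeed smooth and connected since $X$ is geometrically integral (smooth, connected, with a $k$-point) and $f(x)=0$; this is a valid alternative reduction and yields connectedness as well, which the paper's reduction obtains separately. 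Where the paper defers, you reconstruct: your sketch of local symbols, reciprocity, the existence of a modulus $D'$ supported on $D$, the factorization, and uniqueness via generation of $\Jac_{D'}(\overline{X})$ by $i_x(X)$ is exactly the Rosenlicht--Serre argument carried out in the cited reference, so this is a faithful picture of where the real work lies. Two cautions if you intend to carry it out rather than defer. First, the promotion of the divisor-level assignment $\sum_i n_i[y_i] \mapsto \sum_i n_i f(y_i)$ to a $k$-group scheme homomorphism is not achieved by ``functoriality in the test ring plus fppf descent''; a group homomorphism on geometric points does not algebraize for free. The standard route sums $f$ on a suitable symmetric power of $X$, which maps birationally onto $\Jac_{D'}(\overline{X})$, and then extends the resulting generically additive rational map to a homomorphism by Weil's extension theorem for rational maps into algebraic groups; that step, together with the construction of the local symbol and the modulus theorem, is the genuine content. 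Second, you invoke generation of $\Jac_{D'}(\overline{X})$ by $i_x(X)$ as a known fact; note that the paper itself (proof of Theorem \ref{modulifromcurvesequalsfromjacobians}) derives that generation statement \emph{from} the uniqueness assertion of Theorem \ref{albanese}, so to avoid circularity you must prove generation directly (as is done in the references), not import it from this paper.
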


The statement in \cite[\S 10.3, Thm.\,2]{neronmodels} includes the assumption that $G$ is smooth, but this is unnecessary because the smoothness of $X$ ensures that any $k$-morphism $X \rightarrow G$ lands inside the maximal smooth $k$-subgroup scheme of $G$ \cite[Lem.\,C.4.1, Rem.\,C.4.2]{cgp}, and similarly for maps $\Jac_{D'}(\overline{X}) \rightarrow G$, hence the theorem immediately reduces to the smooth case. 

We will use Theorem \ref{albanese} when $G = \Ga$ in order to prove the key boundedness result for curves (Proposition \ref{boundednessforcurves}). We first require a result explaining how the groups $\Jac_D(\overline{X})$ vary as we thicken the divisor $D$. Note that, by restricting the trivialization, we obtain for any pair of divisors $D \subset D'$ on $\overline{X}$ a natural surjective $k$-homomorphism $\Jac_{D'}(\overline{X}) \twoheadrightarrow \Jac_D(\overline{X})$.

\begin{lemma}
\label{thickendivisor}
Let $\overline{X}$ be a proper curve over a field $k$, and let $D \subset D'$ be Weil divisors on $\overline{X}$ with the same support. Then the kernel of the natural surjective map $\Jac_{D'}(\overline{X}) \twoheadrightarrow \Jac_D(\overline{X})$ is split unipotent.
\end{lemma}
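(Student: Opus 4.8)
Write $A := \Gamma(\overline{X}, \calO_{\overline{X}})$, a finite-dimensional $k$-algebra since $\overline{X}$ is proper, and let $i\colon \R_{A/k}(\Gm) \to \R_{D/k}(\Gm)$ and $i'\colon \R_{A/k}(\Gm) \to \R_{D'/k}(\Gm)$ be the maps of the excerpt. The exact sequences recalled above present $\Jac_D(\overline{X})$ and $\Jac_{D'}(\overline{X})$ as extensions of the common group $\Jac(\overline{X})$ by the affine parts $\R_{D/k}(\Gm)/i(\R_{A/k}(\Gm))$ and $\R_{D'/k}(\Gm)/i'(\R_{A/k}(\Gm))$, compatibly with the restriction map $\Jac_{D'}(\overline{X}) \twoheadrightarrow \Jac_D(\overline{X})$ and the identity on $\Jac(\overline{X})$. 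The plan is first to apply the snake lemma in the category of commutative fppf sheaves to this map of short exact sequences. Since the induced map on the quotients $\Jac(\overline{X})$ is the identity, this identifies the kernel $K$ of $\Jac_{D'}(\overline{X}) \twoheadrightarrow \Jac_D(\overline{X})$ with the kernel of the induced map $\bar{r}$ on affine parts, and a direct computation then gives $K \cong U/(U \cap i'(\R_{A/k}(\Gm)))$, where $U := \ker(\R_{D'/k}(\Gm) \twoheadrightarrow \R_{D/k}(\Gm))$ is the congruence subgroup. The crucial structural consequence is that $K$ is a quotient of $U$.

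The key point is then that $U$ is split unipotent. Indeed, $U$ is the functor $R \mapsto 1 + (J \otimes_k R)$, where $J := \ker(\calO_{D'} \twoheadrightarrow \calO_D)$; because $D$ and $D'$ have the same support, $\calO_{D'}$ is Artinian and $J$ is nilpotent, say $J^N = 0$. The congruence filtration $U \supseteq 1 + J^2 \supseteq \dots \supseteq 1 + J^N = 1$ then has successive quotients $(1 + J^m)/(1 + J^{m+1}) \cong (J^m/J^{m+1}, +)$, the isomorphism $1 + x \mapsto x \bmod J^{m+1}$ being a homomorphism because $J^{2m} \subseteq J^{m+1}$. Each such quotient is a finite-dimensional $k$-vector space, i.e. a vector group, and hence admits a filtration with $\Ga$-quotients; concatenating these exhibits $U$ itself as split unipotent.

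Finally I would invoke (and, since it is not recorded above, briefly justify) the fact that a quotient of a split unipotent group is split unipotent. Given a filtration $1 = U_0 \trianglelefteq U_1 \trianglelefteq \dots \trianglelefteq U_m = U$ with $U_{j+1}/U_j \cong \Ga$ and a closed subgroup $N \trianglelefteq U$, the images $\overline{U}_j$ of the $U_j$ in $U/N$ form a filtration whose successive quotients $\overline{U}_{j+1}/\overline{U}_j$ are quotients of $U_{j+1}/U_j \cong \Ga$. As every proper subgroup scheme of $\Ga$ is finite, and $\Ga$ modulo a finite subgroup scheme is again $\Ga$, each such quotient is isomorphic to $\Ga$ or is trivial, so $U/N$ is split unipotent. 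Applying this with $N = U \cap i'(\R_{A/k}(\Gm))$ shows that $K$ is split unipotent, as desired.

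The main subtlety to be careful about is the very first step: a priori $K$ is only a smooth connected commutative unipotent group, which over an imperfect field could perfectly well be wound, so the point is to produce it as a quotient of the manifestly split congruence group $U$ rather than arguing from its intrinsic structure. Once the snake lemma delivers the surjection $U \twoheadrightarrow K$, the remaining steps are routine, the only mild input being the permanence of split unipotence under quotients.
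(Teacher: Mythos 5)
Your proof is correct and is essentially the paper's argument: both identify the kernel as a quotient of the congruence-unit group $\ker(\R_{D'/k}(\Gm) \to \R_{D/k}(\Gm))$ and exhibit that group as built from vector groups via $1+x \mapsto x$. The differences are only organizational --- the paper reduces to one-point thickenings $D' = D + x$ and works with $\mathfrak{m}_x^n/\mathfrak{m}_x^{n+1}$ where you filter $1+J$ by powers of $J$ all at once, and you spell out the snake-lemma identification of the kernel and the permanence of split unipotence under quotients, both of which the paper uses implicitly.
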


\begin{proof}
It suffices to treat the case when $D' = D + x$ for some closed point $x \in {\rm{Supp}}(D)$. Let $\mathfrak{m}_x \subset \calO_{\overline{X}, x}$ denote the maximal ideal of the local ring at $x$, and suppose that $x$ appears in $D$ with multiplicity $n > 0$. In this case, the kernel in question admits a natural surjective map from the functor sending a $k$-scheme $T$ to $1 + \mathfrak{m}_x^n\Gamma(T, \calO_T) \pmod{\mathfrak{m}_x^{n+1}}$. Indeed, the map is the one sending $u$ to the pair $(\calO_{\overline{X}}, u)$. (The reason that the kernel is only a quotient of this group in general is that we have to quotient by the global units on $\overline{X}$ that are $1 \pmod{\mathfrak{m}_x^n}$, since two pairs which differ by such a global unit will be isomorphic.) It therefore suffices to check that this functor is represented by a split unipotent $k$-group. The map $f \mapsto 1 + f$ is an isomorphism from $(\mathfrak{m}_x^n/\mathfrak{m}_x^{n+1}, +)$ to $(1 + \mathfrak{m}_x^n \pmod{\mathfrak{m}_x^{n+1}}, \cdot)$, and similarly for $\mathfrak{m}_x^n\Gamma(T, \calO_T)$, etcetera. Hence we see that the functor in question is represented by the vector group associated to the finite-dimensional $k$-vector space $\mathfrak{m}_x^n/\mathfrak{m}_x^{n+1}$.
\end{proof}

We will also require the following result.

\begin{proposition}
\label{homfg}
For a smooth connected group scheme $G$ over a field $k$, $\Hom(G, \Ga)$ is a finitely-generated $\End(\Ga)$-module.
\end{proposition}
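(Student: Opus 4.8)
The plan is to reduce to the case where $G$ is a smooth connected commutative unipotent group and then exploit the Frobenius isogeny onto a split group constructed in the discussion preceding Example \ref{exofwoundgp}. Throughout write $R := \End(\Ga)$. I may assume $\mathrm{char}(k) = p > 0$, the characteristic-$0$ case being immediate since there $R = k$ and every smooth connected unipotent group is a vector group. The essential ring-theoretic input I would establish first is that $R = k\{F\}$, the twisted polynomial ring in the Frobenius $F$ with $Fa = a^pF$, is \emph{left Noetherian}: replacing $f$ by $f - (a_d/b_e^{p^{d-e}})\,F^{d-e}g$, where $a_d,b_e$ are leading coefficients and $d \geq e$ the degrees, strictly lowers the degree, giving a left division algorithm and hence showing that every left ideal is principal. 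Consequently, over $R$ a submodule of a finitely generated module is finitely generated, and an extension of finitely generated modules is finitely generated; these two facts, together with the left-exactness of $\Hom(-,\Ga)$ on commutative groups, are the only homological tools the argument needs.

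Next I would carry out a sequence of reductions. Since $\Ga$ is abelian, every homomorphism $G \to \Ga$ factors through the abelianization $G/\mathscr{D}G$, which is again smooth and connected, so I may assume $G$ is commutative. Since $\Ga$ is affine, every such homomorphism factors through the affinization $G^{\mathrm{aff}} = \Spec\Gamma(G,\calO_G)$, a smooth connected commutative affine $k$-group, so I may assume $G$ is affine. Finally, the (unique) maximal torus $T \subset G$ is defined over $k$, and $\Hom(T,\Ga) = 0$ since there are no nonzero homomorphisms $\Gm \to \Ga$; thus the exact sequence $0 \to \Hom(G/T,\Ga) \to \Hom(G,\Ga) \to \Hom(T,\Ga) = 0$ reduces us to the unipotent group $U := G/T$.

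The key step is then to invoke the Frobenius isogeny $\pi\colon U \to U^{(p^n)} =: U_s$, whose target $U_s$ is \emph{split} and whose kernel $N = \ker(\pi)$ is a finite (infinitesimal) group scheme. Applying $\Hom(-,\Ga)$ to $0 \to N \to U \xrightarrow{\pi} U_s \to 0$ yields an exact sequence
\[
0 \longrightarrow \Hom(U_s,\Ga) \xrightarrow{\ \pi^*\ } \Hom(U,\Ga) \longrightarrow \Hom(N,\Ga).
\]
Here $\Hom(N,\Ga)$ is the space of primitive elements of the finite-dimensional Hopf algebra $\calO(N)$, hence finite-dimensional over $k$ and a fortiori finitely generated over $R$; and $\Hom(U_s,\Ga)$ is finitely generated over $R$ because $U_s$ is split: choosing a filtration $1 = U_0 \subset \dots \subset U_m = U_s$ with $U_{i+1}/U_i \cong \Ga$ and inducting via $0 \to \Hom(\Ga,\Ga) \to \Hom(U_s,\Ga) \to \Hom(U_{m-1},\Ga)$ with base case $\Hom(\Ga,\Ga) = R$, each stage is finitely generated by the submodule/extension stability over the Noetherian ring $R$. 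Thus $\Hom(U,\Ga)$ is an extension of two finitely generated $R$-modules, hence finitely generated, which completes the proof.

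I expect the main conceptual point to be the reduction of the wound (or general non-split) case to the split case via the Frobenius isogeny: this replaces a direct and delicate analysis of additive functions on wound groups — which would naturally drag in $\Ext^1(\Ga,\Ga)$ — by the trivial split computation together with a harmless finite-group-scheme correction. The remaining risk lies only in the bookkeeping of the reductions to the commutative affine unipotent case and in confirming left-exactness and the Noetherian property of $R$, all of which are routine once set up.
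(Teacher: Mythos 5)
Your proof is correct, but it takes a genuinely different route from the paper's, above all in the core unipotent case. After the common reduction to commutative $G$, the paper filters $G$ by semiabelian varieties and smooth connected unipotent groups (Chevalley's theorem, or [CGP, Th.\,A.3.9] in characteristic $p$), kills the semiabelian part, reduces the unipotent case to $p$-torsion groups by inducting along $0 \to [p]U \to U \to U/[p]U \to 0$, embeds a $p$-torsion $U$ into $\Ga^n$ by [CGP, Prop.\,B.1.13], and then cites [DG, Ch.\,IV, \S 3, Cor.\,6.7] for the finite generation. You instead reach the unipotent case via the affinization $\Spec\Gamma(G,\calO_G)$ --- which requires the theorem on anti-affine groups to know that this is a smooth connected affine algebraic group and that the canonical map is a homomorphism --- followed by the quotient by the maximal $k$-torus, which requires Grothendieck's theorem that maximal tori exist over the ground field (or the multiplicative-part theory of commutative affine groups over imperfect $k$); this is somewhat heavier structure-theoretic input than the paper's single appeal to [CGP, Th.\,A.3.9]. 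The real divergence, and the real payoff, is in the unipotent case itself: you use the Frobenius isogeny $U \to U^{(p^n)}$ onto a split group (the very device described in the paper's introduction), treat the split target by induction along a $\Ga$-filtration, and absorb the infinitesimal kernel $N$ as a finite-dimensional correction via the exact sequence $0 \to \Hom(U^{(p^n)},\Ga) \to \Hom(U,\Ga) \to \Hom(N,\Ga)$. This makes your argument essentially self-contained where the paper's is not: you prove the left Noetherianity of $\End(\Ga) = k\{F\}$ directly by the left division algorithm --- and your handedness is the correct one, since with $Fa = a^pF$ left division needs only $p$-th powers (never $p$-th roots) of leading coefficients and hence works over imperfect $k$ --- whereas the paper outsources precisely this module-theoretic content to Demazure--Gabriel. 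In short: the paper's proof is shorter on the page; yours trades two standard structure-theory citations for a transparent, self-contained treatment of the heart of the matter. Both arguments are sound.
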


One can relax the conditions on $G$ in the above result, but it will suffice for our purposes.

\begin{proof}
We have $\Hom(G, \Ga) = \Hom(G/\mathscr{D}G, \Ga)$, where $\mathscr{D}G$ is the derived group of $G$. We may therefore assume that $G$ is commutative. Then $G$ admits a filtration by semiabelian varieties and smooth connected unipotent groups: over perfect fields this follows from Chevalley's Theorem, while over imperfect fields (or any field of characteristic $p$), it follows from \cite[Th.\,A.3.9]{cgp}. We therefore reduce to these cases. In the semiabelian variety case, $\Hom(G, \Ga) = 0$, so assume that $G = U$ is smooth connected commutative unipotent. If ${\rm{char}}(k) = 0$, then it follows that $U \simeq \Ga^n$ \cite[Prop.\,14.32]{milne}, and we are reduced to the trivial case $U = \Ga$. If ${\rm{char}}(k) = p > 0$, then $U$ is killed by some $p^n$. Then applying the exact sequence
\[
0 \longrightarrow [p]U \longrightarrow U \longrightarrow U/[p]U \longrightarrow 0
\]
and induction, we may assume that $U$ is $p$-torsion. In this case, $U$ is a $k$-subgroup scheme of some $\Ga^n$ (\cite[Prop.\,B.1.13]{cgp} as well as the remark immediately following to handle finite $k$). The result in this case is therefore a special case of \cite[Ch.\,IV, \S3, Cor.\,6.7]{demazuregabriel}.
\end{proof}

Let us recall a definition.

\begin{definition}
\label{principalpart}
For a $p$-polynomial $F(X_1, \dots, X_n) = \sum_{i \in I \subset \{1, \dots, n\}} \sum_{j = 0}^{N_i} c_{i,\,j}X_i^{p^j} \in k[X_1, \dots, X_n]$ with $N_i \geq 0$ and $c_{i, N_i} \neq 0$ for all $i$, we call $$\sum_{i \in I} c_{i,\,N_i}X_i^{p^{N_i}} \in k[X_1, \dots, X_n]$$ the {\em principal part} of $F$. We say that the principal part of $F$ has no nontrivial zeroes over $k$ if its only zero in $k^n$ is $\mathbf{0} \in k^n$.
\end{definition}

We are now prepared to prove the crucial boundedness result required to prove Theorem \ref{maintheorem} when $X$ is a curve.

\begin{proposition}
\label{boundednessforcurves}$($Boundedness for curves$)$
Let $X$ be a geometrically reduced everywhere $1$-dimensional finite type scheme over a field $k$ of characteristic $p > 0$, $F \in k[X_1, \dots, X_n]$ a $p$-polynomial whose principal part has no nontrivial zeroes over $k$, and let $F_X\colon {\rm{H}}^0(X, \calO_X)^n \rightarrow {\rm{H}}^0(X, \calO_X)$ be the map $(s_1, \dots, s_n) \mapsto F(s_1, \dots, s_n)$. Then for any finite-dimensional $k$-vector space $V \subset {\rm{H}}^0(X, \calO_X)$, there is a finite-dimensional $k$-vector space $W \subset {\rm{H}}^0(X, \calO_X)$ such that $F_X^{-1}(V) \subset W^n$.
\end{proposition}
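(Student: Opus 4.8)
The plan is to use the Albanese-type factorization of Theorem \ref{albanese} to convert a bound on the poles of $F(s_1,\dots,s_n)$ into a bound on the poles of the individual $s_i$, the crucial input being that the hypothesis on the principal part of $F$ forces $\ker(F\colon\Ga^n\to\Ga)$ to contain no copy of $\Ga$.

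First I would carry out a sequence of reductions. Since $X$ is geometrically reduced its smooth locus is dense, and the restriction ${\rm H}^0(X,\calO_X)\hookrightarrow{\rm H}^0(U,\calO_U)$ to a dense open $U$ is injective, so it suffices to treat $X$ smooth (a finite-dimensional $W_U\subset{\rm H}^0(U,\calO_U)$ pulls back to a finite-dimensional $W\subset{\rm H}^0(X,\calO_X)$). Working one connected component at a time and replacing $k$ first by the finite separable extension ${\rm H}^0(X,\calO_X)$ and then by a further finite separable extension splitting a closed point, I may assume $X$ is smooth, geometrically connected, and has a rational point $x\in X(k)$. All of these base changes are separable, and since the property that the principal part of $F$ has no nontrivial zero is insensitive to separable extension (it is equivalent to woundness of the associated group), the hypothesis on $F$ is preserved, while finite-dimensionality over the extension implies finite-dimensionality over $k$. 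If $X$ is proper then ${\rm H}^0(X,\calO_X)=k$ and there is nothing to prove, so I assume $X$ affine, with regular compactification $\overline X$ and nonempty boundary $D=\overline X\setminus X$.

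Now a global section $s\in{\rm H}^0(X,\calO_X)$ is the same as a morphism $X\to\Ga$, and $s-s(x)$ is pointed; by Theorem \ref{albanese} it factors as $s-s(x)=\psi\circ i$ for a homomorphism $\psi\colon\Jac_{D'}(\overline X)\to\Ga$ and some modulus $D'$ supported on $D$, the size of $D'$ controlling the pole orders of $s$ along $D$. Thus boundedness of $F_X^{-1}(V)$ amounts to producing a single modulus through which all the $\psi_i$ attached to a solution factor. Given $(s_1,\dots,s_n)$ with $F(s_1,\dots,s_n)=v\in V$, choose a common $D'$ through which all the $s_i$ factor, giving homomorphisms $\psi_i\colon\Jac_{D'}(\overline X)\to\Ga$. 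Because every $p$-polynomial is additive, $F$ defines a homomorphism $\Ga^n\to\Ga$, and $v-v(x)=F(s_1,\dots,s_n)-F(s_1(x),\dots,s_n(x))=\bigl(F\circ(\psi_1,\dots,\psi_n)\bigr)\circ i$. Since $V$ is finite-dimensional, there is a fixed modulus $D_0$ (depending only on $V$) such that $v-v(x)$ factors through $\Jac_{D_0}(\overline X)$ for every $v\in V$; by the uniqueness clause of Theorem \ref{albanese}, the homomorphism $\Psi:=F\circ(\psi_1,\dots,\psi_n)\colon\Jac_{D'}(\overline X)\to\Ga$ therefore factors through the transition quotient $q\colon\Jac_{D'}(\overline X)\to\Jac_{D_0}(\overline X)$ (after enlarging $D'$ so that $D'\ge D_0$).

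The crux is to descend this factorization from $\Psi$ to the tuple $(\psi_1,\dots,\psi_n)$ itself. Let $U:=\ker q$, which is split unipotent by iterating Lemma \ref{thickendivisor}. Since $\Psi$ kills $U$, the homomorphism $(\psi_1,\dots,\psi_n)\colon\Jac_{D'}(\overline X)\to\Ga^n$ restricts on $U$ to a homomorphism into $K:=\ker(F\colon\Ga^n\to\Ga)$. I claim $\Hom(U,K)=0$: because $U$ is split, a d\'evissage along a composition series with $\Ga$-quotients reduces this to $\Hom(\Ga,K)=0$, the nonexistence of a nonconstant homomorphism $\Ga\to K$. Such a homomorphism would be a tuple $(\phi_1,\dots,\phi_n)$ of additive polynomials over $k$ with $\sum_i f_i(\phi_i)=0$, where $F=\sum_{i=1}^n f_i(X_i)$; comparing top-degree terms yields a relation $\sum_{i\in S}c_{i,N_i}d_i^{p^{N_i}}=0$ among the nonzero leading coefficients $d_i\in k$ of the $\phi_i$, which is precisely a nontrivial zero of the principal part of $F$ over $k$ -- a contradiction. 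Hence $(\psi_1,\dots,\psi_n)$ kills $U$ and so factors through $\Jac_{D_0}(\overline X)$; each $s_i$ thus has modulus bounded by $D_0$, so lies in the fixed finite-dimensional space $W:={\rm H}^0(\overline X,\calO_{\overline X}(D_0))$, giving $F_X^{-1}(V)\subset W^n$. The main obstacle is exactly the claim $\Hom(\Ga,K)=0$ together with the recognition that one must route the argument through it: a naive comparison of leading Laurent coefficients of the $s_i$ at a point $P\in D$ fails, since the principal part of $F$ may well acquire nontrivial zeroes over the (possibly inseparable) residue field $\kappa(P)$; passing through homomorphisms of algebraic groups is what keeps all the relevant leading coefficients in $k$, where the hypothesis on $F$ has force.
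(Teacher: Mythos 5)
Your argument tracks the paper's own proof closely through its first stage: the reduction to a pointed smooth affine curve, the factorization through generalized Jacobians, the uniform modulus $D_0$ extracted from the finite-dimensionality of $V$ via the uniqueness clause of Theorem \ref{albanese}, and the key fact that $\ker(F\colon \Ga^n \to \Ga)$ receives no nonzero homomorphism from a split unipotent group (your direct proof of $\Hom(\Ga,K)=0$ by comparing top-degree terms, plus d\'evissage, is correct and matches the paper's citation of \cite[Lem.\,B.1.7]{cgp}). The gap is in your final sentence: from ``each $\psi_i$ factors through $\Jac_{D_0}(\overline{X})$'' you conclude $s_i \in W := {\rm{H}}^0(\overline{X}, \calO_{\overline{X}}(D_0))$. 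That inference is false in characteristic $p$. Factoring through $\Jac_{D_0}(\overline{X})$ bounds the \emph{modulus} of $s_i$, not its pole orders: the set of pointed sections arising as $\psi \circ i_x$ with $\psi \in \Hom(\Jac_{D_0}(\overline{X}), \Ga)$ is a module over $\End(\Ga)$, hence closed under $s \mapsto s^p = \mathrm{Frob}\circ\psi\circ i_x$, and so has unbounded pole orders along $D$. Concretely, take $X = \A^1 \subset \P^1$ and $D_0 = 2[\infty]$: then $\Jac_{D_0}(\P^1) \simeq \Ga$ with $i_x(t) = t-x$, and every function $(t-x)^{p^j} = \mathrm{Frob}^j\circ i_x$ factors through this single $\Jac_{D_0}$, while ${\rm{H}}^0(\P^1, \calO(2[\infty]))$ contains only functions with pole order at most $2$ at infinity.

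This is exactly why the paper's proof does not stop where yours does. Having established the uniform modulus, the paper invokes Proposition \ref{homfg} --- $\Hom(\Jac_{MD}(\overline{X}), \Ga)$ is \emph{finitely generated as an $\End(\Ga)$-module} --- to write each coordinate $f_i$ of a solution as a $p$-polynomial in finitely many \emph{fixed} sections $s_1, \dots, s_r$, and then carries out a leading-coefficient analysis at each point $y$ of $D$ to rule out large poles. Your closing remark, that routing through homomorphisms of algebraic groups ``keeps all the relevant leading coefficients in $k$'' and thereby sidesteps the failure of the hypothesis on $F$ over the inseparable residue field $\kappa(y)$, identifies the right difficulty but does not resolve it; the paper confronts it directly: since $k = k_s$ and $\kappa(y)/k$ is finite purely inseparable, all sufficiently high $p$-power Frobenius twists of the finitely many leading coefficients of $s_1, \dots, s_r$ lie in $k$, so if some $f_i$ had a pole of very large order at $y$, the leading term of $F(f_1, \dots, f_n)$ would be a value of the principal part at a nonzero vector of $k^n$ --- nonzero by hypothesis --- forcing $F(f_1, \dots, f_n)$ to have a pole too large to lie in $V$. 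Without Proposition \ref{homfg} and this second stage, a bounded modulus does not yield a bounded vector space, and the proposition does not follow.
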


\begin{proof}
We first note that we are free to replace $X$ with a dense open subscheme $Y$. Indeed, if we let $V|_Y$ denote $\{v|_Y \mid v \in V\}$, then assuming that the proposition holds for $Y$, we find that $\{(f_1, \dots, f_n) \in {\rm{H}}^0(Y, \calO_Y)^n \mid F(f_1, \dots, f_n) \in V|_Y\}$ spans a finite-dimensional $k$-vector space. Since the map ${\rm{H}}^0(X, \calO_X) \rightarrow {\rm{H}}^0(Y, \calO_Y)$ is injective, it follows that $\{(f_1, \dots, f_n) \in {\rm{H}}^0(Y, \calO_Y)^n \mid F(f_1, \dots, f_n) \in V\}$ is also finite-dimensional, as required. We may therefore assume that $X$ is smooth and quasi-projective (affine even). We are also free to pass from $k$ to $k_s$, since this preserves the hypothesis that the principal part of $F$ has no nontrivial zeroes \cite[Lem.\,B.1.8]{cgp}. In particular, we may assume that $X(k) \neq \emptyset$. Choose a point $x \in X(k)$.

We claim that it also suffices to prove the proposition with ${\rm{H}}^0(X, \calO_X)$ replaced by ${\rm{H}}^0(X, \calO(-x))$, the space of functions vanishing at $x$. Indeed, we are free to enlarge $V$, so we may assume that $V$ contains the constant functions. If the proposition holds for ${\rm{H}}^0(X, \calO(-x))$, then there is a finite-dimensional $k$-vector space $W \subset {\rm{H}}^0(X, \calO(-x))$ such that
\[
\{(f_1, \dots, f_n) \in {\rm{H}}^0(X, \calO(-x))^n \mid F(f_1, \dots, f_n) \in V \cap {\rm{H}}^0(X, \calO(-x))\} \subset W^n.
\]
We then claim that
\[
\{(f_1, \dots, f_n) \in {\rm{H}}^0(X, \calO_X)^n \mid F(f_1, \dots, f_n) \in V\} \subset (W + k)^n.
\]
Indeed, suppose that $F(f_1, \dots, f_n) \in V$. Let $c := F(f_1(x), \dots, f_n(x)) \in k \subset V$. Then $f_i-f_i(x) \in {\rm{H}}^0(X, \calO(-x))$, and $$F(f_1-f_1(x), \dots, f_n - f_n(x)) = F(f_1, \dots, f_n) - F(f_1(x), \dots, f_n(x)) \in V \cap {\rm{H}}^0(X, \calO(-x)),$$ so each $f_i-f_i(x) \in W$, hence $f_i \in W + k$. Summarizing, $X$ is a smooth quasi-projective curve, $x \in X(k)$, and we are free to prove the proposition with ${\rm{H}}^0(X, \calO_X)$ replaced by ${\rm{H}}^0(X, \calO(-x))$.

Let $\overline{X}$ be the regular compactification of $X$, and let $D := \overline{X}\backslash X$ with its reduced structure. An element of ${\rm{H}}^0(X, \calO(-x))$ is the same as a morphism of pointed $k$-schemes $f\colon (X, x) \rightarrow (\Ga, 0)$. By Theorem \ref{albanese}, for every such morphism there exist a positive integer $N$ and a unique (once one has chosen $N$) $k$-group homomorphism $\psi\colon \Jac_{ND}(\overline{X}) \rightarrow \Ga$ such that the following diagram commutes:
\begin{equation}
\label{boundednessforcurvespfeqn1}
\begin{tikzcd}
X \arrow{r}{i_{x,\,N}} \arrow{dr}[swap]{f} & \Jac_{ND}(\overline{X}) \arrow[d, dashrightarrow, "\exists! \psi"] \\
& \Ga
\end{tikzcd}
\end{equation}
We claim that there is a positive integer $M > 0$ with the following property: for every integer $M' \geq M$, if $\psi\colon \Jac_{M'D}(\overline{X}) \rightarrow \Ga$ is a homomorphism that does not factor through the canonical surjection $\Jac_{M'D}(\overline{X}) \twoheadrightarrow \Jac_{MD}(\overline{X})$, then $\psi \circ i_{x,\,M'} \notin V$. Indeed, thanks to the uniqueness of the homomorphism furnished by Theorem \ref{albanese}, it suffices to show that there is an $M$ such that, in the diagram (\ref{boundednessforcurvespfeqn1}), we may take $N = M$ for every $f \in V$. In fact, since $V$ is finite-dimensional, we may choose $M$ that works for every element of some chosen basis for $V$. Since a $k$-linear combination of homomorphisms $\Jac_{MD}(\overline{X}) \rightarrow \Ga$ is still such a homomorphism, it follows that this same $M$ works for all $f \in V$, as desired.

Now suppose that $f_1, \dots, f_n\colon (X, x) \rightarrow (\Ga, 0)$ are such that $F(f_1, \dots, f_n) \in V$. Then we claim that, for each $f_i$, there is a commutative diagram
\begin{equation}
\label{boundednessforcurvespfeqn2}
\begin{tikzcd}
X \arrow{r}{i_{x,\,M}} \arrow{dr}[swap]{f_i} & \Jac_{MD}(\overline{X}) \arrow[d, dashrightarrow, "\exists! \psi_i"] \\
& \Ga
\end{tikzcd}
\end{equation}
with $M$ the positive integer of the preceding paragraph. Indeed, Theorem \ref{albanese} implies that there is such a diagram, but with $M$ perhaps replaced by a larger integer $N > M$. Let $U := \ker(\Jac_{ND}(\overline{X}) \rightarrow \Jac_{MD}(\overline{X}))$. By Lemma \ref{thickendivisor}, $U$ is split unipotent. If, for at least one $f_i$, the map $\psi_i$ does not factor through $\Jac_{MD}(\overline{X})$, then the restricted homomorphism $\psi_i|_U \colon U \rightarrow \Ga$ is nonzero. By \cite[Lem.\,B.1.7]{cgp}, the fact that the principal part of $F$ has no nontrivial zeroes over $k$ implies that $\ker(F\colon \Ga^n \rightarrow \Ga)$ admits no nonzero $k$-homomorphism from a split unipotent group. Therefore, the non-vanishing of some some $\psi_i|U$ would imply that $F(\psi_1|_U, \dots, \psi_n|_U) \neq 0$, hence that $F(\psi_1, \dots, \psi_n) \colon \Jac_{ND}(\overline{X}) \rightarrow \Ga$ does not factor through $\Jac_{MD}(\overline{X})$. But $F(f_1, \dots, f_n) \in V$, so this contradicts our choice of $M$. This proves the claim that whenever $F(f_1, \dots, f_n) \in V$, necessarily there is a diagram (\ref{boundednessforcurvespfeqn2}) for a fixed (i.e., depending only on $V$) positive integer $M$. 

By Proposition \ref{homfg} applied to $G := \Jac_{MD}(\overline{X})$, we see that there are finitely many sections $s_1, \dots, s_r \in {\rm{H}}^0(X, \calO(-x))$ such that, for every $n$-tuple $f_1, \dots, f_n \in {\rm{H}}^0(X, \calO(-x))$ satisfying $F(f_1, \dots, f_n) \in V$, one has that each $f_i$ is a $p$-polynomial in $s_1, \dots, s_r$. Let $y_1, \dots, y_m$ be the closed points of $D$. We will show that the poles of the $f_i$ at each $y_j$ are of uniformly bounded order (i.e., depending only on $V$ and not on the $f_i$). First, this holds for elements of $V$ because $V$ is finite-dimensional. Now let $y$ be one of the $y_j$, and let $k' := k(y)$ be the residue field of $\overline{X}$ at $y$, which is a finite purely inseparable extension of the separably closed field $k$. Choose a uniformizer $T$ of the local ring $\calO_{\overline{X}, y}$ of $\overline{X}$ at $y$. For an element $g \in {\rm{Frac}}(\calO_{\overline{X}, y})$ lying in the fraction field of the local ring at $y$, we say that $g$ has {\em leading term} $\alpha T^r$ if $T^{-r}g \in \calO_{\overline{X}, y}$ and if $\alpha \in k'$ is its image under the reduction map (which is a $k$-algebra map) $\calO_{\overline{X}, y} \rightarrow k'$. (Note that a leading term can be $0$, so in particular any element has many different leading terms corresponding to those $r$ which are $\leq {\rm{ord}}_y(g)$. A nonzero element has a unique leading term with $r = {\rm{ord}}_y(g)$.) The leading terms admit the expected formal properties under addition and multiplication of elements.

Without loss of generality, suppose that $s_i$ is not regular at $y$ for $1 \leq i \leq e$, and that it is regular at $y$ for $e < i \leq r$. Let $n_i$ be the order of the pole of $s_i$ at $y$ ($1 \leq i \leq e$), so that $s_i$ has leading term $\alpha_iT^{-n_i}$ for some $\alpha_i \in (k')^{\times}$. Then there is some integer $b' \geq 0$ such that, for every $b \geq b'$ and every $1 \leq i \leq e$, $\alpha_i^{p^b} \in k^{\times}$, and therefore $s_i^{p^b}$ has leading term lying in $(k^{\times})T^{-n_ip^b}$. We see, therefore, that there is a positive integer $C$ such that, if a $p$-polynomial $f$ in the $s_i$ has pole of order $C' \geq C$ at $y$, then the leading term of $f$ lies in $(k^{\times})T^{-C'}$. It then follows that, if $f_1, \dots, f_n$ are $p$-polynomials in the $s_i$ (as they must be if they vanish at $x$ and satisfy $F(f_1, \dots, f_n) \in V$) at least one of which has a pole of order $\geq C'$ at $y$, then $F(f_1, \dots, f_n)$ has leading term $P(\beta_1, \dots, \beta_n)T^{-C''}$ for $P$ the principal part of $F$, $C'' := \max_i\{-{\rm{deg}}_{X_i}(F){\rm{ord}}_y(f_i)\} \geq C'$, and $\beta \in k$ not all $0$. Because the principal part of $F$ has no nontrivial zeroes over $k$, it follows that ${\rm{ord}}_y(F(f_1, \dots, f_n)) = -C'' \leq - C'$. That is, $F(f_1, \dots, f_n)$ also has a pole of order $\geq C'$. In particular, if $C'$ is chosen large enough, then $F(f_1, \dots, f_n) \notin V$. We therefore see that there is a uniform bound on the order of the poles at $y$ of functions $f_1, \dots, f_n \in {\rm{H}}^0(X, \calO(-x))$ such that $F(f_1, \dots, f_n) \in V$. Since this holds for all $y \in \overline{X}\backslash X$, we see that the $f_i$ are restricted to lie in a finite-dimensional $k$-vector space, as desired.
\end{proof}

Now we proceed to the proof of Proposition \ref{boundednessforcurves} (``boundedness for curves'') for geometrically reduced $k$-schemes of arbitrary dimension. The idea is to use suitable curve fibrations to reduce to the curve case. The key lemma is the following.

\begin{lemma}
\label{curvefibrations}
Let $k$ be an infinite field, $\overline{X}$ a projective, geometrically irreducible, generically smooth $k$-scheme of dimension $d > 0$, and $D \subset \overline{X}$ an irreducible divisor. Then there is a projective $k$-scheme $\overline{X'}$ equipped with $k$-morphisms $\phi\colon \overline{X'} \rightarrow \overline{X}$ and $f\colon \overline{X'} \rightarrow \P^{d-1}$ such that
\begin{itemize}
\item[(i)] there is a finite $k$-subscheme $Z \subset \overline{X}$ such that $\phi$ restricts to an isomorphism \newline $\phi^{-1}(\overline{X} \backslash Z) \xrightarrow{\sim} \overline{X}\backslash Z$;
\item[(ii)] the map $f$ is smooth on a dense open subscheme of $\overline{X'}$, and its generic fiber is a curve;
\item[(iii)] if $D \not \subset Z$, then letting $D' := \overline{\phi^{-1}(D\backslash (D \cap Z))}$ denote the strict transform of $D$, $D'$ is a ``horizontal'' divisor with respect to $f$; that is, $D'$ intersects the generic fiber of $f$.
\end{itemize}
\end{lemma}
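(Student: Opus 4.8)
The plan is to construct $f$ as the resolution of a generic linear projection, using the infinitude of $k$ to choose the center over $k$. The case $d = 1$ is degenerate and trivial: take $\overline{X'} = \overline{X}$, $\phi = \mathrm{id}$, $Z = \emptyset$, and $f$ the structure map to $\P^{0} = \Spec(k)$, whose generic fibre is $\overline{X}$ itself. So assume $d \geq 2$. Fix a projective embedding $\overline{X} \hookrightarrow \P^{N}$. For a linear subspace $L \subset \P^{N}$ of dimension $N - d$, write $\pi_{L}$ for the linear projection away from $L$, a rational map regular exactly on $\P^{N} \setminus L$ and with target $\P^{d-1}$. The restriction of $\pi_{L}$ to $\overline{X}$ is the rational map associated to the linear system of hyperplanes through $L$, whose base scheme on $\overline{X}$ is $Z := \overline{X} \cap L$. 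I would set $\overline{X'} := \mathrm{Bl}_{Z}(\overline{X})$, let $\phi$ be the (projective) blow-up morphism, and let $f \colon \overline{X'} \to \P^{d-1}$ be the morphism resolving $\pi_{L} \circ \phi$; blowing up the base scheme of a linear system renders it base-point free, so $f$ is indeed a morphism. Everything now reduces to choosing $L$ generically.

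I would phrase the needed constraints on $L$ as dense open conditions in the Grassmannian $G$ of $(N-d)$-planes in $\P^{N}$. Since $G$ is a $k$-rational variety and $k$ is infinite, a finite intersection of dense opens in $G$ — itself dense open and defined over $k$ — contains a $k$-point, and any such $L$ will do. The first condition is $\dim(\overline{X} \cap L) = 0$; this is generic (as $\dim \overline{X} + \dim L = N$), makes $Z$ finite, and makes $\phi$ an isomorphism over $\overline{X} \setminus Z$, which gives (i). Given $\dim Z = 0$, the generic fibre of $\pi_{L}|_{\overline{X}}$ is $\overline{X} \cap \langle p, L \rangle$ for the $(N-d+1)$-plane $\langle p, L \rangle$, of dimension $d + (N-d+1) - N = 1$; hence $\pi_{L}|_{\overline{X}}$ is dominant with one-dimensional generic fibre, giving the curve-fibre half of (ii). The last condition is $L \cap D = \emptyset$, which is generic because $\dim D + \dim L = (d-1) + (N-d) < N$. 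It forces $D \cap Z = \emptyset$, so the hypothesis $D \not\subset Z$ in (iii) holds automatically, the strict transform $D'$ is carried isomorphically onto $D$ by $\phi$, and $\pi_{L}|_{D} \colon D \to \P^{d-1}$ is an everywhere-defined morphism. For generic $L$ its fibres $D \cap \langle p, L \rangle$ are finite, so it is generically finite, hence dominant (both sides having dimension $d-1$); thus $D'$ meets the generic fibre of $f$, which is (iii).

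The main obstacle is the smoothness half of (ii): that $f$ is smooth on a dense open, equivalently that $\pi_{L}|_{\overline{X}}$ is a generically smooth morphism — the delicate point in characteristic $p$, where a fixed projection can well be inseparable. I would defuse it by reducing generic smoothness to a genericity statement in $G$ that is pure linear algebra, hence characteristic-insensitive. Passing to $\overline{k}$, pick a closed point $x_{0}$ in the smooth locus of $\overline{X}$, with embedded projective tangent space $T \subset \P^{N}$ of dimension $d$. The differential of $\pi_{L}|_{\overline{X}}$ at $x_{0}$ is surjective exactly when $L$ meets $T$ in the expected dimension and avoids $x_{0}$; since $\dim T + \dim L = N$, a generic $L$ meets $T$ in a single point, which one further arranges to be distinct from $x_{0}$, so the locus of $L$ good at $x_{0}$ is a nonempty, hence dense, open subset of $G_{\overline{k}}$. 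For any such $L$ the fibre of $f$ through $x_{0}$ is a smooth curve there, so $f$ is smooth at $x_{0}$ and therefore on a dense open. Consequently the locus $G^{\circ} \subset G$ of centers yielding a generically smooth projection contains a dense open; as smoothness of the generic fibre is a geometric condition, $G^{\circ}$ is defined over $k$, so it meets $G(k)$ by infinitude of $k$. Choosing $L$ to be a $k$-point in the intersection of $G^{\circ}$ with the three dense opens of the previous paragraph (nonempty since $k$ is infinite) produces $\overline{X'}, \phi, f, Z$ satisfying (i)--(iii).
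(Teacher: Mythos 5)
Your proposal is correct, and its skeleton is identical to the paper's: both resolve a generic linear projection $\overline{X} \dashrightarrow \P^{d-1}$ into a morphism by blowing up its finite base locus on $\overline{X}$, and both use rationality of the parameter space together with the infinitude of $k$ to choose the center rationally (the paper writes the projection as the map given by $d$ generic hyperplanes $H_0, \dots, H_{d-1}$; your $L$ is just $H_0 \cap \dots \cap H_{d-1}$). Where you genuinely diverge is in certifying the genericity conditions. For the smoothness half of (ii), the paper invokes Bertini's theorem: generically, $\overline{X} \cap (H_1 \cap \dots \cap H_{d-1})$ is a generically smooth curve met transversely along the smooth locus of $\overline{X}$, so the projection is smooth at the generic point of this special fibre, and upper semicontinuity of fibre dimension then yields the curve statement; horizontality of $D$ is obtained by asking that $D \cap (H_1 \cap \dots \cap H_{d-1})$ be finite and nonempty. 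You instead verify surjectivity of the differential at a single smooth $\overline{k}$-point by pure linear algebra ($\dim(T \cap L) = 0$ and $x_0 \notin L$), which is more elementary and deliberately sidesteps any appeal to Bertini in characteristic $p$; and your condition $L \cap D = \emptyset$ buys (iii) more cheaply, since projection from a center disjoint from the projective variety $D$ is a finite morphism, hence surjective onto $\P^{d-1}$ for dimension reasons. Both routes are sound. The one sentence you should expand is the descent of your locus $G^{\circ}$ to $k$: ``generic smoothness is a geometric condition'' is the right idea but not by itself an argument. To make it precise, either take $G^{\circ}$ to be the image in the Grassmannian of the smooth locus of the universal projection over $G$ --- this image is open because the universal family is flat over $G$, it is visibly defined over $k$, and the fibrewise criterion for smoothness combined with your tangent-space computation over $\overline{k}$ shows it is nonempty and consists of centers giving generically smooth projections --- or run your pointwise argument at a point $x_0 \in \overline{X}^{\rm{sm}}(k_s)$ (such points exist and are dense, since the smooth locus is a nonempty $k$-smooth scheme) and descend the resulting Galois-stable dense open subset from $k_s$ to $k$.
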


\begin{proof}
Choose an embedding $\overline{X} \subset \P^N$. For hyperplanes $H_0, \dots, H_{d-1}$, with $H_i$ defined by the equation $F_i := \sum_{j=0}^N c_{i,\,j}Z_j = 0$, consider the rational map $\pi \colon \P^N \dashrightarrow \P^{d-1}$ whose $i$th coordinate is $F_i$. Bertini's Theorem says that, in the space of all $H_0, \dots, H_{d-1}$, there is a nonempty open subscheme satisfying the following three conditions:
\begin{itemize}
\item[(i)] $\overline{X} \cap (H_0 \cap \dots \cap H_{d-1})$ is finite;
\item[(ii)] $\overline{X} \cap (H_1 \cap \dots \cap H_{d-1})$ is a generically smooth curve, and $H_1 \cap \dots \cap H_{d-1}$ intersects $\overline{X}$ transversely on the smooth locus of $\overline{X}$; and
\item[(iii)] $D \cap (H_1 \cap \dots \cap H_{d-1})$ is finite and nonempty.
\end{itemize}
Since the space of all $d$-tuples of hyperplanes is rational and $k$ is infinite, it follows that one may choose such hyperplanes $H_0, \dots, H_{d-1}$ defined over $k$.

Condition (i) says that $\pi|_{\overline{X}} \colon \overline{X} \dashrightarrow \P^{d-1}$ defines a morphism on the complement of a finite subscheme of $\overline{X}$. Condition (ii) implies that the fiber above $[1: 0: \dots : 0]$ is a generically smooth curve, and the fact that the intersection is generically transverse says that the map $\pi|_{\overline{X}}$ is actually smooth at the generic point of this fiber, hence $\pi|_{\overline{X}}$ is generically smooth. Furthermore, the fact that the fiber above $[1: 0: \dots : 0]$ is a curve, together with the upper semicontinuity of fiber dimension and the fact that $\overline{X}$ is of dimension $1$ greater than $\P^{d-1}$, implies that the generic fiber of $\pi|_{\overline{X}}$ is a curve. Finally, similar considerations with dimension, together with the fact that the fiber of $\pi|_D$ above $[1: 0: \dots : 0]$ is finite and nonempty, tell us that $D$ is a horizontal divisor with respect to the rational map $\pi|_{\overline{X}}$. To obtain the scheme $\overline{X'}$ with the required properties, simply blow up $\overline{X}$ along the finite subscheme on which $\pi|_{\overline{X}}$ fails to be defined in order to resolve the locus of indeterminacy of the rational map $\pi|_{\overline{X}}$ (see \cite[Ch.\,II, Example 7.17.3]{hartshorne}).
\end{proof}

We also require the following lemma.

\begin{lemma}
\label{nontrivialzeroessepble}
Let $k$ be a field of characteristic $p > 0$, and let $K/k$ be a separable extension field. Then for a $p$-polynomial $H \in k[X_1, \dots, X_n]$ of the form $H = \sum_{i=1}^n c_iX_i^{p^{n_i}}$, $H$ has no nontrivial zeroes over $k$ if and only if it has no nontrivial zeroes over $K$.
\end{lemma}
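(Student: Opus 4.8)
The plan is to prove the contrapositive of the nontrivial implication. Since $k \subseteq K$, any nontrivial zero of $H$ over $k$ is also one over $K$, so the implication ``no nontrivial zeroes over $K$ $\Rightarrow$ no nontrivial zeroes over $k$'' is immediate; the content is the reverse, and for this it suffices to show that a nontrivial zero of $H$ over $K$ forces one over $k$. By \cite[Lem.\,B.1.8]{cgp} (applied over $k$), the existence of a nontrivial zero over $k$ is equivalent to the existence of one over the separable closure $k_s$, so it is enough to produce a nontrivial zero of $H$ over $k_s$.

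First I would reduce to a finitely generated extension. A nontrivial zero $(a_1, \dots, a_n) \in K^n$ lies in the subextension $L := k(a_1, \dots, a_n)$, which is finitely generated over $k$. Moreover $L/k$ is separable: tensoring the inclusion $L \hookrightarrow K$ with the flat $k$-algebra $k^{1/p}$ keeps it injective, and a subring of the reduced ring $K \otimes_k k^{1/p}$ is reduced. A finitely generated separable extension is separably generated, so I may choose a separating transcendence basis $t_1, \dots, t_r$, giving a tower $k \subseteq F := k(t_1, \dots, t_r) \subseteq L$ with $F/k$ purely transcendental and $L/F$ finite separable.

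Next I would descend from $L$ to $F$. Since $L/F$ is finite separable we have $L \subseteq F_s$, so the given tuple is a nontrivial zero of $H$ over $F_s$; as the coefficients $c_i$ lie in $k \subseteq F$, applying \cite[Lem.\,B.1.8]{cgp} with base field $F$ produces a nontrivial zero of $H$ over $F$ itself, which I again denote $(a_1, \dots, a_n)$ (now with $a_i \in k(t_1, \dots, t_r)$). Writing the $a_i$ over a common denominator, $a_i = f_i/g$ with $g \neq 0$ and $f_i \in k[t_1, \dots, t_r]$ not all zero, and multiplying the relation $H(a_1, \dots, a_n) = 0$ by $g^{p^N}$ with $N := \max_i n_i$ yields the polynomial identity
\[
\sum_{i=1}^n c_i\, f_i^{\,p^{n_i}}\, g^{\,p^N - p^{n_i}} = 0 \quad\text{in } k[t_1, \dots, t_r],
\]
each exponent $p^N - p^{n_i}$ being a nonnegative integer.

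The only real obstacle is to specialize the $t_j$ so that the resulting zero remains \emph{nontrivial}. Fix an index $i_0$ with $f_{i_0} \neq 0$ and choose $\tau \in k_s^r$ avoiding the vanishing locus of the nonzero polynomial $g \cdot f_{i_0}$; such $\tau$ exists because $k_s$ is infinite (the separable closure of any field is infinite). Evaluating the identity above at $\tau$ and dividing by $g(\tau)^{p^N} \neq 0$ gives $H(b_1, \dots, b_n) = 0$ with $b_i := f_i(\tau)/g(\tau) \in k_s$ and $b_{i_0} \neq 0$, i.e.\ a nontrivial zero of $H$ over $k_s$. As reduced above, this is exactly what was needed, and the contrapositive, hence the lemma, follows.
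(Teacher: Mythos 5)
Your proof is correct, but it follows a genuinely different route from the paper's. The paper exploits separability in one step: it writes $K$ as a filtered direct limit of smooth $k$-algebras (this is exactly where separability of $K/k$ enters), spreads the given nontrivial zero out to a nontrivial zero $\mathbf{z} \in \Gamma(X, \calO_X)^n$ over some nonempty smooth $k$-scheme $X$, and specializes at a point of $X(k_s)$ where $\mathbf{z} \neq \mathbf{0}$ --- such a point exists because $X(k_s)$ is Zariski dense in the smooth $X$ --- after which a single application of \cite[Lem.\,B.1.8]{cgp} concludes. You instead unpack separability by hand: you cut down to the finitely generated subextension $L = k(a_1, \dots, a_n)$ (your verification that $L/k$ is separable via MacLane's criterion is correct), split $L/k$ by a separating transcendence basis into a purely transcendental part $F/k$ and a finite separable part $L/F$, handle the algebraic part by a second application of \cite[Lem.\,B.1.8]{cgp} over the base field $F$, and handle the transcendental part by clearing denominators and specializing at a point of $k_s^r$ avoiding a single hypersurface. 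What each approach buys: the paper's argument is shorter but invokes the structure theorem that separable extensions are filtered limits of smooth algebras, together with density of $k_s$-points in arbitrary nonempty smooth $k$-schemes; yours replaces both by MacLane's theorem plus the elementary fact that a nonzero polynomial over the infinite field $k_s$ has a nonvanishing point, at the cost of using \cite[Lem.\,B.1.8]{cgp} twice. Both proofs share the same skeleton --- specialize at separable points, then transfer between a field and its separable closure via \cite[Lem.\,B.1.8]{cgp} --- but your descent through the tower $k \subseteq F \subseteq L$ is a more elementary, self-contained implementation than the paper's spreading-out argument.
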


\begin{proof}
The if direction is trivial. For the only if direction, suppose that $F$ admits a nontrivial zero over $K$. Writing $K$ as a filtered direct limit of smooth $k$-algebras (possible because $K/k$ is separable), this zero spreads out to a nontrivial zero $\mathbf{z} \in \Gamma(X, \calO_X)^n$ over some nonempty smooth $k$-scheme $X$. Then for some $x \in X(k_s)$, $\mathbf{z}_x \neq \mathbf{0}$ (since $X(k_s)$ is Zariski dense in $X$). We thereby obtain a nontrivial zero of $H$ over $k_s$. That is, we have reduced the lemma to the case $K = k_s$, and this case follows from \cite[Lem.\,B.1.8]{cgp}.
\end{proof}

We are now prepared to prove the key boundedness result for higher-dimensional schemes.

\begin{proposition}$($``Boundedness''$)$
\label{boundedness}
Let $k$ be a field of characteristic $p > 0$, $F \in k[X_1, \dots, X_n]$ a $p$-polynomial whose principal part has no nontrivial zeroes over $k$, $X$ a geometrically reduced $k$-scheme of finite type, and $V \subset {\rm{H}}^0(X, \calO_X)$ a finite-dimensional $k$-vector space. Let $F_X\colon {\rm{H}}^0(X, \calO_X)^n \rightarrow {\rm{H}}^0(X, \calO_X)$ denote the map $(s_1, \dots, s_n) \mapsto F(s_1, \dots, s_n)$. Then there is a finite-dimensional $k$-vector space $W \subset {\rm{H}}^0(X, \calO_X)$ such that $F_X^{-1}(V) \subset W^n$.
\end{proposition}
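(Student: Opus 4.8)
The plan is to reduce the statement, one boundary divisor at a time, to the curve case already established in Proposition~\ref{boundednessforcurves}, by slicing $X$ into a family of curves over the function field of a projective space using the curve fibrations of Lemma~\ref{curvefibrations}. First I would perform the standard reductions, exactly as at the start of the proof of Proposition~\ref{boundednessforcurves}. Passing to a dense open subscheme $Y \subset X$ is harmless: the restriction map $H^0(X, \calO_X) \to H^0(Y, \calO_Y)$ is injective because $X$ is geometrically reduced, so if a finite-dimensional $W_Y$ works for $(Y, V|_Y)$ then $W := \{f \in H^0(X, \calO_X) : f|_Y \in W_Y\}$ is finite-dimensional (it injects into $W_Y$) and works for $(X, V)$. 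Base changing to $k_s$ is also harmless: this is a separable extension, so by Lemma~\ref{nontrivialzeroessepble} the principal part of $F$ retains the property of having no nontrivial zeroes, and a finite-dimensional solution space $W'$ over $k_s$ yields $W := W' \cap H^0(X, \calO_X)$ over $k$. After these reductions $k$ is separably closed, hence infinite; shrinking to the smooth locus and treating connected components separately (where $H^0$ is a product), I may assume $X$ is smooth and irreducible, and even geometrically irreducible, since a purely inseparable base change is a homeomorphism. Set $d := \dim X$; the cases $d \le 1$ are immediate, the case $d = 1$ being Proposition~\ref{boundednessforcurves} itself, so assume $d \ge 2$.

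Next I would fix a normal projective compactification $X \subset \overline{X}$ (projective, normal, geometrically irreducible, generically smooth), with irreducible codimension-one boundary components $D_1, \dots, D_r$ of $\overline{X} \setminus X$. Because $\overline{X}$ is normal it is regular in codimension one, so each $\ord_{D_i}$ is a discrete valuation, and any $s \in H^0(X, \calO_X)$ has poles only along the $D_i$. Hence it suffices to produce, for each $i$, a bound $m_i$ depending only on $V$ on the pole order $-\ord_{D_i}(s_j)$ of each coordinate of each tuple with $F(s_1, \dots, s_n) \in V$: the coordinates then all lie in the finite-dimensional space $H^0(\overline{X}, \calO(\sum_i m_i D_i)) \cap H^0(X, \calO_X)$, which is the desired $W$. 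To bound poles along $D := D_i$, I would apply Lemma~\ref{curvefibrations} to $(\overline{X}, D)$, obtaining $\phi\colon \overline{X'} \to \overline{X}$ (an isomorphism away from a finite set $Z$) and $f\colon \overline{X'} \to \P^{d-1}$ whose generic fiber $C$ is a curve over $K := k(\P^{d-1})$ and for which the strict transform $D'$ of $D$ is horizontal. Then $D'$ is irreducible, its generic point $\xi$ maps to the generic point of $\P^{d-1}$ and so is a closed point of $C$ lying in the nonempty finite set $D'_\eta := D' \cap C$.

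The core step is to transport the problem to $C$. Pulling back and restricting to the generic fiber sends a coordinate $s_j$ to $g_j := (\phi^* s_j)|_C \in K(C)$, regular on the geometrically reduced smooth curve $Y'' := (\text{smooth locus of } C) \cap \phi^{-1}(X)$ over $K$; moreover $F(g_1, \dots, g_n) = (\phi^* F(s_1, \dots, s_n))|_C$ lies in the finite-dimensional $K$-space $V_C$ spanned by the restrictions of a basis of $V$. Since $\phi(D') = D$ is disjoint from $X$, the points $D'_\eta$ lie outside $Y''$, i.e.\ in its boundary. As $K/k$ is separable (indeed purely transcendental), Lemma~\ref{nontrivialzeroessepble} shows the principal part of $F$ has no nontrivial zeroes over $K$, so Proposition~\ref{boundednessforcurves} applies over $K$ and confines the $g_j$ to a finite-dimensional $K$-space depending only on $V_C$. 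A finite-dimensional space of functions on a curve has uniformly bounded pole orders at each boundary point, so in particular $-\ord_\xi(g_j)$ is bounded by some $m_i$ independent of the tuple. Finally, because $\phi$ is a local isomorphism at the generic point of $D$ (which avoids $Z$ since $\dim D = d - 1 \ge 1$) and $\calO_{C, \xi} = \calO_{\overline{X'}, D'} = \calO_{\overline{X}, D}$, one has $\ord_{D_i}(s_j) = \ord_{D'}(\phi^* s_j) = \ord_\xi(g_j)$, so $m_i$ is the desired uniform bound. Running this over the finitely many $i$ and assembling the divisors completes the proof.

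The step I expect to be the main obstacle is precisely this transport to the generic fiber: verifying that $Y''$ meets the hypotheses of Proposition~\ref{boundednessforcurves} over $K$, that the points $D'_\eta$ genuinely sit in the boundary of $Y''$ (so the curve case bounds the poles there rather than constraining an interior point), and above all the identification of the order of vanishing along the divisor $D'$ of $\overline{X'}$ with the order of vanishing of the restricted function at the corresponding closed point of $C$, matched back through $\phi$ to $\ord_{D_i}$. The separability input of Lemma~\ref{nontrivialzeroessepble} is what lets the hypothesis on the principal part descend to the function field $K$, and is indispensable for applying the curve case fiberwise.
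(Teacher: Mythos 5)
Your proposal is correct and follows essentially the same route as the paper's own proof: the same initial reductions (pass to $k_s$, shrink to a smooth geometrically integral open, compactify normally), the same strategy of bounding pole orders divisor-by-divisor, the same use of Lemma~\ref{curvefibrations} to produce a fibration with horizontal strict transform, Lemma~\ref{nontrivialzeroessepble} to descend the principal-part hypothesis to $k(\P^{d-1})$, and Proposition~\ref{boundednessforcurves} applied over that function field, concluding via the identification of $\ord_{D}$ with the order at the corresponding closed point of the generic fiber. Your treatment of the local-ring identifications and of which open curve the fiberwise proposition is applied to is, if anything, slightly more explicit than the paper's.
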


\begin{proof}
We are free to extend scalars from $k$ to $k_s$, as this preserves the hypothesis that the principal part of $F$ has no nontrivial zeroes \cite[Lem.\,B.1.8]{cgp}. Furthermore, by the same argument as in the beginning of the proof of Proposition \ref{boundednessforcurves}, we may replace $X$ by a dense open subscheme. We are therefore free to assume that the irreducible and connected components of $X$ agree. We may replace $X$ by its connected components and thereby assume that $X$ is geometrically integral. When $X$ is $0$-dimensional, it is just ${\rm{Spec}}(k)$ and the assertion is trivial. When $X$ is $1$-dimensional, the proposition is Proposition \ref{boundednessforcurves}. We may therefore assume that $d := {\rm{dim}}(X) > 1$.

We may also (by shrinking $X$) assume that $X$ is smooth and quasi-projective, hence embeds as a dense open subscheme of a normal, generically smooth, geometrically irreducible, projective $k$-scheme $\overline{X}$. In order to show that $F_X^{-1}(V)$ lies in a finite-dimensional space, it suffices to show that, for each of the finitely many divisors $D$ of $\overline{X}$ lying in $\overline{X}\backslash X$, there is a uniform (i.e., depending only on $V$) bound on the orders of the poles along $D$ of functions $f_1, \dots, f_n \in {\rm{H}}^0(X, \calO_X)$ such that $F(f_1, \dots, f_n) \in V$ (because, if $\partial X$ is the union of the divisors lying in $\overline{X}\backslash X$, then ${\rm{H}}^0(\overline{X}, \calO_{\overline{X}}(N\cdot \partial X))$ is finite-dimensional for any integer $N$).

So let $D \subset \overline{X}$ be a divisor lying in the complement of $X$. Applying Lemma \ref{curvefibrations}, we obtain a projective $k$-scheme $\overline{X'}$ and $k$-morphisms $\phi\colon \overline{X'} \rightarrow \overline{X}$ and $f\colon \overline{X'} \rightarrow \P^{d-1}$ such that the following three conditions hold:
\begin{itemize}
\item[(i)] there is a finite $k$-subscheme $Z \subset \overline{X}$ such that $\phi$ restricts to an isomorphism \newline $\phi^{-1}(\overline{X} \backslash Z) \xrightarrow{\sim} \overline{X}\backslash Z$;
\item[(ii)] the map $f$ is smooth on a dense open subscheme of $\overline{X'}$, and its generic fiber is a curve;
\item[(iii)] letting $D' := \overline{\phi^{-1}(D\backslash (D \cap Z))}$ denote the strict transform of $D$, then $D'$ is a ``horizontal'' divisor with respect to $f$; that is, it intersects the generic fiber of $f$.
\end{itemize}
(Note that the condition $D \not \subset Z$ given in Lemma \ref{curvefibrations}(iii) is automatically satisfied here because ${\rm{dim}}(X) > 1 \Longrightarrow {\rm{dim}}(D) > 0$.) Let $\eta$ be the generic point of $\P^{d-1}$. Then $\overline{X'}_\eta$ is a projective, generically smooth curve over $k(\eta)$ by (ii). Furthermore, because $X$ is regular in codimension $1$, the same holds for $\overline{X'}\backslash \phi^{-1}(Z)$ by (i). Because $D' \not \subset \phi^{-1}(Z)$, therefore, it follows that $\overline{X'}$ is regular at the generic point of $D$, which lies in $\overline{X'}_\eta$ by (iii). Therefore, it makes sense to speak of the order of a function on $\overline{X'}_\eta$ along $D_\eta$, and this order agrees with the order of the corresponding rational function on $\overline{X}$ along $D$. Furthermore, by Lemma \ref{nontrivialzeroessepble}, the principal part of $F$ still has no nontrivial zeroes over $k(\eta)$. Applying Proposition \ref{boundednessforcurves} to the $k(\eta)$-span of the finite-dimensional $k$-vector space $V|_{\overline{X'}_\eta}$, therefore, we find that there is a uniform bound $N > 0$ such that, for any functions $f_1, \dots, f_n \in {\rm{H}}^0(X, \calO_X)$ satisfying $F(f_1, \dots, f_n) \in V$, the $f_i$ all have poles of order $\leq N$ along $D$, as desired. This completes the proof of the proposition.
\end{proof}

\section{Bounded families of morphisms}
\label{boundedfamiliessection}

In this section, we will introduce the notion of a bounded family of morphisms from a $k$-scheme $X$ into a smooth connected unipotent $k$-group $U$, and show that, when $X$ is geometrically reduced and $U$ is wound, the family of $k_s$-morphisms from $X$ into $U$ is bounded. This is the key result that will allow us to construct the moduli spaces $\calMor(X, U)^+$ in \S \ref{modulispacesunipsection}. The key to showing this boundedness result is Proposition \ref{boundedness}.

Given a smooth connected unipotent group $U$ over a field $k$, one has a $\overline{k}$-scheme isomorphism $\overline{\phi}\colon U_{\overline{k}} \xrightarrow{\sim} \A^d_{\overline{k}}$, where $d := {\rm{dim}}(U)$. Indeed, this holds for split unipotent groups over $k$ itself, and every smooth connected unipotent group becomes split over $\overline{k}$ -- in fact, over $k_{\perf}$ \cite[Thm.\,15.4(iii)]{borelalggroups}. Given a $k$-scheme $X$, we say that a family $\mathscr{F}$ of $T$-morphisms $X_T \rightarrow U_T$ ($T$ a $k$-scheme) is {\em $\overline{\phi}$-bounded} if there is a finite-dimensional $k$-vector space $V \subset {\rm{H}}^0(X, \calO_X)$ such that $\{ \overline{\phi} \circ f_{\overline{k}} \mid f \in \mathscr{F}\} \subset V^d \otimes_k {\rm{H}}^0(T_{\overline{k}}, \calO_{T_{\overline{k}}}) \subset {\rm{H}}^0(X, \calO_X) \otimes_k {\rm{H}}^0(T_{\overline{k}}, \calO_{T_{\overline{k}}}) \subset {\rm{H}}^0(X_{T_{\overline{k}}}, \calO_{X_{T_{\overline{k}}}})$ (the last containment is an equality if $T$ is quasi-compact).

\begin{lemma}
\label{boundednessindofisom}
Given $k$-schemes $X$ and $T$, a family $\mathscr{F}$ of $T$-morphisms $X_T \rightarrow U_T$, and two $\overline{k}$-scheme isomorphisms $\overline{\phi}, \overline{\psi}\colon U_{\overline{k}} \xrightarrow{\sim} \A^d_{\overline{k}}$, one has that $\mathscr{F}$ is $\overline{\phi}$-bounded if and only if it is $\overline{\psi}$-bounded.
\end{lemma}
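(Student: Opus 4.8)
The plan is to reduce everything to the observation that the transition map $\sigma := \overline{\psi} \circ \overline{\phi}^{-1}$ is a $\overline{k}$-scheme automorphism of $\A^d_{\overline{k}}$, hence is given by a $d$-tuple of polynomials $\sigma_1, \dots, \sigma_d \in \overline{k}[Y_1, \dots, Y_d]$, and then to track how the bounding vector space transforms under such a polynomial substitution. By the symmetry between $\overline{\phi}$ and $\overline{\psi}$, it suffices to prove one implication, say that $\overline{\phi}$-boundedness implies $\overline{\psi}$-boundedness.

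Suppose then that $\mathscr{F}$ is $\overline{\phi}$-bounded, witnessed by a finite-dimensional $V \subset {\rm{H}}^0(X, \calO_X)$, so that for every $f \in \mathscr{F}$ each of the $d$ coordinate functions of $\overline{\phi} \circ f_{\overline{k}}$ lies in $V \otimes_k {\rm{H}}^0(T_{\overline{k}}, \calO_{T_{\overline{k}}})$. For each $f$ we have the identity of morphisms $\overline{\psi} \circ f_{\overline{k}} = \sigma \circ (\overline{\phi} \circ f_{\overline{k}})$, so the $j$-th coordinate function of $\overline{\psi} \circ f_{\overline{k}}$ is obtained by substituting the coordinate functions of $\overline{\phi} \circ f_{\overline{k}}$ into the polynomial $\sigma_j$. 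Since the natural map ${\rm{H}}^0(X, \calO_X) \otimes_k {\rm{H}}^0(T_{\overline{k}}, \calO_{T_{\overline{k}}}) \rightarrow {\rm{H}}^0(X_{T_{\overline{k}}}, \calO_{X_{T_{\overline{k}}}})$ is a $k$-algebra homomorphism, this substitution may be performed in the source ring, where the coordinate functions already live. The task is thus to show that such polynomial expressions stay inside $W \otimes_k {\rm{H}}^0(T_{\overline{k}}, \calO_{T_{\overline{k}}})$ for a single finite-dimensional $W \subset {\rm{H}}^0(X, \calO_X)$ depending only on $V$ and $\sigma$.

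I would verify this by bookkeeping of the two ring operations. The product of two elements $\sum_a v_a \otimes t_a$ and $\sum_b w_b \otimes u_b$ of $V \otimes_k {\rm{H}}^0(T_{\overline{k}})$ equals $\sum_{a, b} (v_a w_b) \otimes (t_a u_b)$, which lies in $(V \cdot V) \otimes_k {\rm{H}}^0(T_{\overline{k}})$, where $V \cdot V$ denotes the image in ${\rm{H}}^0(X, \calO_X)$ of the multiplication map $V \otimes_k V \rightarrow {\rm{H}}^0(X, \calO_X)$. Iterating, any product of at most $D := \max_j \deg(\sigma_j)$ of the coordinate functions of $\overline{\phi} \circ f_{\overline{k}}$ lies in $V^{(D)} \otimes_k {\rm{H}}^0(T_{\overline{k}})$, where $V^{(D)} \subset {\rm{H}}^0(X, \calO_X)$ is the finite-dimensional $k$-span of all products of at most $D$ elements of $V$. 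The only point requiring a moment's care is that the coefficients of $\sigma_j$ lie a priori in $\overline{k}$ rather than $k$; but since ${\rm{H}}^0(T_{\overline{k}}, \calO_{T_{\overline{k}}})$ is a $\overline{k}$-algebra, a scalar $c \in \overline{k}$ times an element $v \otimes t$ equals $v \otimes (ct)$, again lying in $V^{(D)} \otimes_k {\rm{H}}^0(T_{\overline{k}})$. Taking $W := V^{(D)} + k\cdot 1$ (adjoining the constants to absorb the constant terms of the $\sigma_j$), which is finite-dimensional, we conclude that each coordinate function of $\overline{\psi} \circ f_{\overline{k}}$ lies in $W \otimes_k {\rm{H}}^0(T_{\overline{k}})$, uniformly in $f \in \mathscr{F}$. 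Hence $\mathscr{F}$ is $\overline{\psi}$-bounded, and exchanging the roles of $\overline{\phi}$ and $\overline{\psi}$ yields the reverse implication.

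There is no serious obstacle here: the content is merely that a change of scheme-theoretic coordinates on $\A^d$ is a \emph{polynomial} (not merely linear) map, so the notion of boundedness must survive passage through arbitrary polynomials. The main thing to get right is that the $\overline{k}$-coefficients introduced by $\sigma$ are harmless, precisely because they are swallowed by the $\overline{k}$-algebra ${\rm{H}}^0(T_{\overline{k}}, \calO_{T_{\overline{k}}})$.
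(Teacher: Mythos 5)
Your proof is correct and follows the same route as the paper's: both factor $\overline{\psi} = \sigma \circ \overline{\phi}$ through the transition automorphism $\sigma := \overline{\psi}\circ\overline{\phi}^{-1}$ of $\A^d_{\overline{k}}$ and observe that this polynomial map carries $V^d \otimes_k {\rm{H}}^0(T_{\overline{k}}, \calO_{T_{\overline{k}}})$ into $W^d \otimes_k {\rm{H}}^0(T_{\overline{k}}, \calO_{T_{\overline{k}}})$ for a single finite-dimensional $W \subset {\rm{H}}^0(X, \calO_X)$. The only difference is one of detail: the paper asserts this finite-dimensionality step in a single line, whereas you verify it explicitly via the spaces $V^{(D)}$ spanned by bounded products and the absorption of the $\overline{k}$-coefficients of $\sigma$ into the second tensor factor, which is exactly the bookkeeping the paper leaves implicit.
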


\begin{proof}
Suppose that $\mathscr{F}$ is $\overline{\phi}$-bounded, and we will show that it is $\overline{\psi}$-bounded. Let $\overline{\alpha} := \overline{\psi}\circ \overline{\phi}^{-1}$, a $\overline{k}$-automorphism of $\A^d_{\overline{k}}$. Then we have a commutative diagram
\begin{equation}
\label{boundednessindofisompfeqn1}
\begin{tikzcd}
U_{\overline{k}} \arrow{r}{\overline{\phi}} \arrow{dr}[swap]{\overline{\psi}} & \A^d_{\overline{k}} \arrow{d}{\overline{\alpha}} \\
& \A^d_{\overline{k}}
\end{tikzcd}
\end{equation}
Choose a finite-dimensional $k$-vector space $V \subset {\rm{H}}^0(X, \calO_X)$ such that $\overline{\phi} \circ \mathscr{F}_{\overline{k}} := \{ \overline{\phi} \circ f_{\overline{k}} \mid f \in \mathscr{F}\} \subset V^d \otimes_k {\rm{H}}^0(T_{\overline{k}}, \calO_{X_{T_{\overline{k}}}})$. Then, via $\overline{\alpha}$, $V^d$ is mapped into the $\Gamma(T_{\overline{k}}, \calO_{_{\overline{k}}})$-span of a finite-dimensional $k$-subspace of ${\rm{H}}^0(X, \calO_X)^d$. The commutative diagram (\ref{boundednessindofisompfeqn1}) then implies that $\mathscr{F}$ is $\overline{\psi}$-bounded.
\end{proof}

In light of Lemma \ref{boundednessindofisom}, we may make the following definition.

\begin{definition}
\label{boundedfamiliesofmorphismsdef}
Given schemes $X$ and $T$ over a field $k$, a smooth connected unipotent $k$-group $U$, and a family $\mathscr{F}$ of $T$-morphisms $X_T \rightarrow U_T$, we say that $\mathscr{F}$ is {\em bounded} if it is $\overline{\phi}$-bounded for some (equivalently, every) $\overline{k}$-scheme isomorphism $\overline{\phi}\colon U_{\overline{k}} \xrightarrow{\sim} \A^d_{\overline{k}}$.
\end{definition}

\begin{remark}
\label{boundedfork_scriterion}
The case that we shall be interested in is when $T = {\rm{Spec}}(k_s)$. Note that, in this case, an equivalent definition of boundedness is that $\{f_{\overline{k}} \mid f \in \mathscr{F}\} \subset \Mor_{\overline{k}}(X_{\overline{k}}, U_{\overline{k}})$ is bounded in the sense of being contained (via an isomorphism $\overline{\phi}$) in $V^d \otimes_{k} \overline{k}$ for some finite-dimensional $k$-vector space $V \subset {\rm{H}}^0(X, \calO_X)$. A similar remark applies whenever $T = {\rm{Spec}}(k')$ for some algebraic extension field $k'/k$.
\end{remark}

We now prove various permanence properties for boundedness of families of morphisms under certain maps between unipotent groups.

\begin{lemma}
\label{boundednesslargergroup}
Suppose given a $k$-scheme morphism $h\colon U_1 \rightarrow U_2$ between smooth connected unipotent $k$-group schemes, and $k$-schemes $X$ and $T$.
\begin{itemize}
\item[(i)] If $\mathscr{G}$ is a bounded family of $T$-morphisms $X_T \rightarrow (U_1)_T$, then $h\circ \mathscr{G} := \{h \circ g \mid g \in \mathscr{G}\}$ is a bounded family of morphisms into $(U_2)_T$.
\end{itemize}
Suppose given a short exact sequence
\[
1 \longrightarrow U' \xlongrightarrow{i} U \xlongrightarrow{\pi} U'' \longrightarrow 1
\]
of smooth connected unipotent $k$-groups.
\begin{itemize}
\item[(ii)] A family $\mathscr{F}$ of $T$-morphisms $X_T \rightarrow U'_T$ is bounded in $U'$ if and only if $i \circ \mathscr{F} := \{i \circ f \mid f \in \mathscr{F}\}$ is bounded in $U$.
\item[(iii)] Suppose that $U'$ is commutative and $p$-torsion. Then, given bounded families $\mathscr{F}' \subset \Mor_T(X_T, U'_T)$ and $\mathscr{F} \subset \Mor_T(X_T, U_T)$, the family $i(\mathscr{F}')\cdot \mathscr{F}$ is bounded.
\end{itemize}
\end{lemma}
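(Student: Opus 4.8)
The plan is to exploit the freedom granted by Lemma \ref{boundednessindofisom} to test boundedness against any convenient isomorphism $\overline{\phi}\colon U_{\overline{k}} \xrightarrow{\sim} \A^d_{\overline{k}}$, and to choose one adapted to the subgroup $U'$ so that the left-translation action of $i(U')$ becomes \emph{affine-linear} in coordinates. Since boundedness is a condition imposed after base change to $\overline{k}$, I would work throughout over the perfect field $\overline{k}$, where the structure of $U'$ simplifies decisively.

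The key structural input is that $U'_{\overline{k}}$, being a smooth connected commutative $p$-torsion unipotent group over the perfect field $\overline{k}$, is a vector group: $U'_{\overline{k}} \simeq \Ga^m$ with $m := \dim U'$, its group law being vector addition (by the structure theory of commutative unipotent groups over perfect fields, cf.\ \cite{cgp}). Moreover, the extension $1 \to U' \to U \to U'' \to 1$ splits as $\overline{k}$-schemes: $\pi_{\overline{k}}$ exhibits $U_{\overline{k}}$ as a $U'_{\overline{k}}$-torsor over $U''_{\overline{k}} \simeq \A^{d-m}_{\overline{k}}$, and such a $\Ga^m$-torsor over affine space is trivial since ${\rm{H}}^1(\A^{d-m}_{\overline{k}}, \calO) = 0$. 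Choosing a scheme section $s$ with $s(e) = e$ yields an isomorphism $U_{\overline{k}} \simeq U'_{\overline{k}} \times U''_{\overline{k}}$, $(u', u'') \mapsto i(u')\,s(u'')$, and hence an $\overline{\phi}$ whose first $m$ coordinates are additive coordinates on $U'$ and whose last $d-m$ are any coordinates on $U''$. In these coordinates one computes, for $u = i(v')\,s(u'')$, that $i(u')\cdot u = i(u')\,i(v')\,s(u'') = i(u' + v')\,s(u'')$; that is, left translation by $i(u')$ sends $(v', u'') \mapsto (u' + v', u'')$, fixing the $U''$-coordinates and acting by \emph{addition} on the $U'$-coordinates. (Only the normality of $U' = \ker\pi$ and the additivity of its group law enter here.)

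Granting this, the boundedness of $i(\mathscr{F}')\cdot \mathscr{F}$ is formal. By hypothesis $\mathscr{F}'$ is bounded in $U'$ and $\mathscr{F}$ is bounded in $U$, so there are finite-dimensional $V', V_1, V_2 \subset {\rm{H}}^0(X, \calO_X)$ with the coordinates of each $f' \in \mathscr{F}'$ lying in $(V')^m \otimes_k {\rm{H}}^0(T_{\overline{k}}, \calO_{T_{\overline{k}}})$, and the $U'$- and $U''$-coordinates of each $f \in \mathscr{F}$ lying in $V_1^m \otimes_k {\rm{H}}^0(T_{\overline{k}}, \calO_{T_{\overline{k}}})$ and $V_2^{d-m} \otimes_k {\rm{H}}^0(T_{\overline{k}}, \calO_{T_{\overline{k}}})$ respectively (in the adapted coordinates $i(f')$ has $U'$-coordinates those of $f'$ and vanishing $U''$-coordinates). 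The computation above shows that $i(f')\cdot f$ has $U''$-coordinates equal to those of $f$, hence still in $V_2^{d-m}\otimes_k(\cdots)$, and $U'$-coordinates the sum of those of $f'$ and of $f$, hence in $(V' + V_1)^m \otimes_k(\cdots)$; since $V' + V_1$ is again finite-dimensional, the family is bounded. The one point requiring care — and the reason the hypotheses on $U'$ cannot be dropped — is that the group law of $U'$ must introduce no cross-terms: were $U'_{\overline{k}}$ merely commutative but not $p$-torsion, say a Witt group, its addition would contribute carry terms such as $v'_i\,(f_{U'})_j$ whose $k$-span is not finite-dimensional, destroying boundedness. The main work is therefore the structural reduction of the second paragraph, after which the argument is purely bookkeeping.
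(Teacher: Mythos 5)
Your argument for part (iii) is correct and is essentially the paper's own: trivialize the $U'_{\overline{k}}$-torsor $\pi_{\overline{k}}\colon U_{\overline{k}} \to U''_{\overline{k}}$ over the affine base, use a \emph{group-theoretic} isomorphism $U'_{\overline{k}} \simeq \Ga^m$ (available because $U'$ is commutative and $p$-torsion and $\overline{k}$ is perfect), observe that left translation by $i(u')$ becomes addition in the first factor of the adapted coordinates, and invoke Lemma \ref{boundednessindofisom} to test boundedness in those coordinates. But as a proof of the lemma the proposal has two defects. The first is incompleteness: parts (i) and (ii) are never addressed. Part (i) concerns an arbitrary $k$-scheme morphism $h\colon U_1 \to U_2$, which in coordinates is a polynomial map $\A^{d_1}_{\overline{k}} \to \A^{d_2}_{\overline{k}}$; the needed observation (and the paper's entire proof of (i)) is that polynomial maps preserve boundedness, because the $k$-span of products of elements of finite-dimensional subspaces of ${\rm{H}}^0(X, \calO_X)$ is again finite-dimensional. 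Part (ii) is a biconditional for a \emph{general} smooth connected unipotent $U'$, where no vector-group structure exists; your adapted coordinates are built only in the commutative $p$-torsion case, so one must rerun the torsor-splitting with $U'_{\overline{k}}$ merely split unipotent (trivialize by inducting along a filtration with $\Ga$-quotients, using ${\rm{H}}^1(Y, \Ga) = 0$ for affine $Y$) and use \emph{scheme} coordinates on $U'$; the adapted coordinates then show both that $i \circ \mathscr{F}$ is bounded when $\mathscr{F}$ is and, by reading off the first factor, the converse.

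The second defect is a false side claim, namely that the hypotheses on $U'$ ``cannot be dropped'' because for a Witt group the carry terms $v'_i\,(f_{U'})_j$ would have infinite-dimensional $k$-span. They do not: if $V', V_1 \subset {\rm{H}}^0(X, \calO_X)$ are finite-dimensional, then the span of $\{vw \mid v \in V',\ w \in V_1\}$ has dimension at most $(\dim V')(\dim V_1)$, so any polynomial map --- in particular any group law written in arbitrary coordinates --- preserves boundedness. Your remark in fact contradicts part (i) of the very lemma you are proving, which asserts exactly this stability under arbitrary (hence polynomial, possibly highly non-linear) scheme morphisms $h$. Indeed, once (i) is in hand, (iii) holds with no hypotheses on $U'$ at all: by (i), $i \circ \mathscr{F}'$ is bounded in $U$; the concatenated family of morphisms into $U \times U$ is then bounded; and applying (i) to the multiplication morphism $U \times U \to U$ gives boundedness of $i(\mathscr{F}')\cdot \mathscr{F}$. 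The commutativity and $p$-torsion hypotheses are what make the linear-coordinates bookkeeping (yours and the paper's) available; they are not what makes the statement true.
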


\begin{proof}
(i) Choosing $\overline{k}$-scheme isomorphisms $(U_1)_{\overline{k}} \xrightarrow{\sim} \A^{d_1}_{\overline{k}}$ and $(U_2)_{\overline{k}} \xrightarrow{\sim} \A^{d_2}_{\overline{k}}$, $h_{\overline{k}}$ goes over to a $\overline{k}$-morphism $\A^{d_1}_{\overline{k}} \rightarrow \A^{d_2}_{\overline{k}}$. This morphism sends finite-dimensional $\overline{k}$-vector subspaces of $\Gamma(X_{\overline{k}}, \calO_{X_{\overline{k}}})^{d_1}$ into finite-dimensional $\overline{k}$-vector subspaces of $\Gamma(X_{\overline{k}}, \calO_{X_{\overline{k}}})^{d_2}$.

(ii) The left-multiplication action of $U'$ on $U$ makes $U$ into a $U'$-torsor over $U''$. Because ${\rm{H}}^1(Y, \Ga) = 0$ for any affine scheme $Y$, and because $U'_{\overline{k}}$ is split unipotent, there is an isomorphism $\alpha\colon U_{\overline{k}} \xrightarrow{\sim} U'_{\overline{k}} \times U''_{\overline{k}}$ of $U'_{\overline{k}}$-torsors over $U''_{\overline{k}}$. In particular, if $(u'_0, u''_0)$ corresponds to the identity of $U_{\overline{k}}$, then $i$ corresponds to the map $u' \mapsto (u'u'_0, u''_0)$. Therefore, choosing $\overline{k}$-scheme isomorphisms $\overline{\phi'}\colon U'_{\overline{k}} \xrightarrow{\sim} \A^{d'}_{\overline{k}}$ and $\overline{\phi''}\colon U''_{\overline{k}} \xrightarrow{\sim} \A^{d''}_{\overline{k}}$, then we see by choosing the isomorphism $(\overline{\phi'} \times \overline{\phi''})\circ \alpha \colon U_{\overline{k}} \xrightarrow{\sim} \A^{d'+d''}_{\overline{k}}$ that boundedness in $U'$ is equivalent to boundedness after passing to the larger group $U$.

(iii) We argue similarly as in the proof of (ii), but replace the $\overline{k}$-scheme isomorphism $\overline{\phi'}$ with a {\em $\overline{k}$-group scheme} isomorphism $\overline{\phi'}\colon U'_{\overline{k}} \xrightarrow{\sim} (\mathbf{G}_{a,\,\overline{k}})^{d'}$ \cite[Cor.\,B.2.7]{cgp}. Then the fact that $(\overline{\phi'} \times \overline{\phi''})\circ \alpha$ is an isomorphism of $U'_{\overline{k}}$-torsors over $U''_{\overline{k}}$ implies that, via this isomorphism, the left multiplication action of $U'$ on $U$ goes over to addition in the first variable. In particular, left multiplying by elements lying in (the base change to $T$) of elements in a finite-dimensional vector space of global sections on $X$ preserves the property of lying in (the base change to $T$) of a finite-dimensional vector space, which proves (iii).
\end{proof}

We are now prepared to prove the key boundedness result that will allow us to prove that $\calMor(X, U)^+$ is representable.

\begin{proposition}
\label{mor(x,U)bounded}
For a field $k$, a geometrically reduced $k$-scheme $X$ of finite type, and a wound unipotent $k$-group $U$, the family $\Mor_{k_s}(X_{k_s}, U_{k_s})$ is bounded.
\end{proposition}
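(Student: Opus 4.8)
The plan is to reduce the boundedness of $\Mor_{k_s}(X_{k_s}, U_{k_s})$ to the case of wound \emph{commutative $p$-torsion} groups, where Proposition \ref{boundedness} applies directly, and then to bootstrap back up to arbitrary wound unipotent $U$ using the permanence properties of Lemma \ref{boundednesslargergroup}. First I would dispose of the base field: since $U$ is wound over $k$ if and only if it is wound over $k_s$ (woundness is insensitive to separable extension by \cite[Prop.\,B.3.2]{cgp}), and since the statement concerns $k_s$-morphisms anyway, I may replace $k$ by $k_s$ and thereby assume $k$ is separably closed and $X(k)$ is Zariski dense.

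Next I would set up a dévissage on $U$. A wound unipotent group need not admit a composition series with wound commutative $p$-torsion graded pieces over $k$, but it does admit a filtration with smooth connected graded pieces that are manageable. The natural approach is to induct on $\dim U$ using a central (or at least normal) subgroup. Concretely, I would choose a nontrivial smooth connected central $k$-subgroup $U' \subseteq U$ that is commutative and $p$-torsion — such a subgroup exists inside any nontrivial smooth connected unipotent group (e.g.\ take the last nontrivial term of the lower central series, which is central, and then pass to its $p$-torsion), and any smooth connected subgroup of a wound group is again wound. Set $U'' := U/U'$, which is wound unipotent of smaller dimension (woundness of an extension forces woundness of the quotient \emph{here} only because $U''$ is a quotient of $U$ — but crucially $U''$ is still wound since a $\Ga$ inside $U''$ would lift, using splitness of $U'$ and vanishing of $\mathrm{H}^1(\Ga, U')$, to a $\Ga$ inside $U$, contradicting woundness). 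Given $f \in \Mor_{k_s}(X_{k_s}, U_{k_s})$, composing with $\pi\colon U \to U''$ gives $\pi \circ f$, which by induction lies in a bounded family. The torsor structure of $U$ over $U''$ (trivialized over $\overline{k}$ as in the proof of Lemma \ref{boundednesslargergroup}(ii)--(iii)) lets me lift a bounded section and write $f = i(g) \cdot \tilde{f}$ where $\tilde{f}$ ranges over a fixed bounded family and $g \in \Mor_{k_s}(X_{k_s}, U'_{k_s})$; by Lemma \ref{boundednesslargergroup}(iii) it then suffices to bound the family of such $g$, i.e.\ to prove the proposition for the wound commutative $p$-torsion group $U'$.

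For the base case $U = U'$ wound, commutative, and $p$-torsion, I invoke the structure theory recalled in the introduction: $U'$ is $k$-isomorphic to the vanishing locus inside $\Ga^m$ of a $p$-polynomial $F \in k[X_1, \dots, X_m]$ whose principal part has no nontrivial zeroes over $k$ \cite[Lem.\,B.1.13, B.1.7]{cgp}. A $k_s$-morphism $X_{k_s} \to U'_{k_s}$ is then precisely an $m$-tuple $(s_1, \dots, s_m) \in \mathrm{H}^0(X_{k_s}, \calO_{X_{k_s}})^m$ satisfying $F(s_1, \dots, s_m) = 0$, i.e.\ an element of $F_X^{-1}(0)$. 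Applying Proposition \ref{boundedness} with $V = 0$ produces a finite-dimensional $k$-subspace $W$ with $F_X^{-1}(0) \subset W^m$, which is exactly the assertion that $\Mor_{k_s}(X_{k_s}, U'_{k_s})$ is bounded (taking the isomorphism $U'_{\overline{k}} \xrightarrow{\sim} \A^d_{\overline{k}}$ coming from the graph coordinates on the hypersurface).

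I expect the main obstacle to be the inductive lifting step: I must verify that the family $\{\tilde{f}\}$ of chosen torsor-lifts of a bounded family in $U''$ can itself be taken bounded, and that the decomposition $f = i(g) \cdot \tilde f$ genuinely reduces the bound on $f$ to bounds on $g$ and $\tilde f$. This is where the hypothesis that $U'$ is commutative and $p$-torsion is essential — it is exactly the situation of Lemma \ref{boundednesslargergroup}(iii), which guarantees that the product $i(\mathscr{F}') \cdot \mathscr{F}$ of two bounded families is bounded (over $\overline{k}$ the left-translation action of $U'$ becomes additive translation in a coordinate subspace, so it preserves finite-dimensionality). Care is also needed to confirm that $U''$ remains wound and that one can always locate a \emph{central} commutative $p$-torsion smooth connected $U'$ to make the induction on $\dim U$ terminate; both follow from standard unipotent group theory, but they are the points I would check most carefully.
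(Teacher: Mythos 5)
Your base case (wound commutative $p$-torsion, handled by the $V=0$ case of Proposition \ref{boundedness}) and the overall inductive shape (d\'evissage through a central smooth connected $p$-torsion subgroup) agree with the paper, but two steps of your induction have genuine gaps, and the second one is the key idea of the whole proof. First, the woundness of $U'':=U/U'$: for an \emph{arbitrary} nontrivial smooth connected central $p$-torsion subgroup $U'\subset U$ this is false, and your justification cannot be repaired, because $U'$ -- being a smooth connected subgroup of the wound group $U$ -- is itself wound, \emph{not} split, and ${\rm{H}}^1(\A^1_k,U')$ need not vanish for wound $U'$. Concretely, over $k=\F_p(a,b)$ take $U=\{z^p-z=ax^p+by^p\}\subset\Ga^3$, which is wound, commutative, $p$-torsion and two-dimensional, and let $U'=\ker\bigl((x,y,z)\mapsto y\bigr)=\{z^p-z=ax^p\}$: this $U'$ is a nontrivial smooth connected central $p$-torsion subgroup, yet $U/U'\simeq\Ga$ is not wound (and indeed ${\rm{H}}^1(\A^1_k,U')=\coker\bigl((x,z)\mapsto ax^p+z-z^p \text{ on } k[t]\bigr)\neq 0$, so the torsor $U\to\Ga$ has no section, as woundness of $U$ forces). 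Since Proposition \ref{mor(x,U)bounded} is false for $\Ga$, the induction cannot run with such a $U'$. The repair is what the paper does: take $U'$ to be the cckp-kernel, i.e.\ the \emph{maximal} central smooth connected $p$-torsion subgroup, and invoke the theorem from \cite[App.\,B.3]{cgp} that the quotient of a wound group by its cckp-kernel is again wound; maximality is essential, and no splitting/${\rm{H}}^1$ argument of the kind you sketch can substitute for it.

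Second, the lifting step, which you yourself identify as the main obstacle, is not solved by anything in your proposal. You need a \emph{bounded} family of $k_s$-lifts $\{\tilde f\}$ of the bounded family $\pi\circ\Mor_{k_s}(X_{k_s},U_{k_s})$, but no mechanism you describe produces one. The $U'$-torsor $U\to U''$ need not be trivial over $k_s$, precisely because $U'$ is wound (see the example above), so there is no $k_s$-section $U''\to U$ to compose with; and the trivialization over $\overline{k}$ produces lifts defined only over $\overline{k}$, so the discrepancies $g$ with $f=i(g)\cdot\tilde f$ are merely $\overline{k}$-morphisms into $U'_{\overline{k}}\simeq\Ga^{d'}_{\overline{k}}$ -- a group which is not wound over $\overline{k}$ -- and neither your base case nor your inductive hypothesis bounds those. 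Choosing one $k_s$-lift per element of the image of $\pi_*$ by hand gives no control either: the set of all such lifts is exactly the family $\Mor_{k_s}(X_{k_s},U_{k_s})$ you are trying to bound, so that route is circular. Lemma \ref{boundednesslargergroup}(iii) only multiplies two families \emph{already known} to be bounded; it cannot create the bounded family of lifts. The paper's resolution -- the idea missing from your write-up -- is to push out $1\to U'\to U\to U''\to 1$ along $U'\hookrightarrow\Ga^n$, obtaining $j\colon U\hookrightarrow W$ with $W$ a $\Ga^n$-torsor over the affine scheme $U''$; this torsor \emph{is} trivial over $k$ since ${\rm{H}}^1$ of an affine scheme with coefficients in $\Ga$ vanishes, so $s\circ\pi\circ f$ is a bounded family of lifts landing in $W$ (not in $U$), and one then corrects these lifts back into $j(U)$ by elements of $i(\Ga^n(X_{k_s}))$ which can be taken bounded. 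That correction is exactly where Proposition \ref{boundedness} with \emph{arbitrary} finite-dimensional $V$ is needed: one bounds $F'\circ\mathscr{F}$ by some $V$ and applies Proposition \ref{boundedness} to $F_X^{-1}(V)$. Your proposal only ever invokes the $V=0$ case, which -- as the paper states explicitly in the introduction -- suffices for the commutative $p$-torsion case but not for the inductive step.
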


\begin{proof}
We first prove the result when $U$ is commutative and $p$-torsion. Then by \cite[Prop.\,B.1.13]{cgp}, $U \simeq \ker(F\colon \Ga^n \rightarrow \Ga)$ for some $p$-polynomial $F \in k[X_1, \dots, X_n]$. By \cite[Lem.\,B.1.7]{cgp}, we may choose $F$ so that its principal part admits no nontrivial zeroes over $k$. Let $i\colon U \hookrightarrow \Ga^n$ denote the corresponding inclusion. By Lemma \ref{boundednesslargergroup}(ii), it suffices to check that the family $\{i \circ f\mid f \in \Mor_{k_s}(X_{k_s}, U_{k_s})\}$ is bounded as a family of $k_s$-morphisms $X_{k_s} \rightarrow \mathbf{G}_{a,\,k_s}^n$. Using the obvious isomorphism of $\mathbf{G}_{a,\,k_s}^n$ with $\A^n_{k_s}$, and Remark \ref{boundedfork_scriterion}, this is in turn equivalent to the statement that there is a finite-dimensional $k$-vector space $W \subset {\rm{H}}^0(X, \calO_X)$ such that $F_X^{-1}(0) \subset W^n$, where $F_X\colon {\rm{H}}^0(X, \calO_X)^n \rightarrow {\rm{H}}^0(X, \calO_X)$ is the map $(s_1, \dots, s_n) \mapsto F(s_1, \dots, s_n)$. This is the special case $V = 0$ of Proposition \ref{boundedness}.

Now we prove the general case by dimension induction on $U$, the $0$-dimensional case being trivial. So suppose that $U \neq 1$. By \cite[Prop.\,B.3.2]{cgp} and the fact that the cckp-kernel of a nontrivial smooth connected unipotent $k$-group is nontrivial (see the comment after \cite[Def.\,B.3.1]{cgp}), there is an exact sequence
\begin{equation}
\label{mor(x,U)boundedpfeqn1}
1 \longrightarrow U' \longrightarrow U \longrightarrow U'' \longrightarrow 1
\end{equation}
of wound unipotent $k$-groups such that $U' \subset U$ is nontrivial, central, and $p$-torsion. In particular, as was shown in the preceding paragraph, the proposition holds for maps into $U'$, and it holds for maps into $U''$ by induction. We must deduce that it also holds for $k_s$-maps into $U$.

Arguing as at the beginning of this proof, $U'$ is the vanishing locus of a $p$-polynomial $F \in k[X_1, \dots, X_n]$ whose principal part has no nontrivial zeroes. We then push out the exact sequence (\ref{mor(x,U)boundedpfeqn1}) along the corresponding inclusion $U' \hookrightarrow \Ga^n$ to obtain a commutative diagram of exact sequences
\[
\begin{tikzcd}
1 \arrow{r} & U' \arrow{r} \arrow[d, hookrightarrow] & U \arrow[d, hookrightarrow, "j"] \arrow{r}{\pi} & U'' \arrow[d, equals] \arrow{r} & 1 \\
1 \arrow{r} & \Ga^n \arrow[d, twoheadrightarrow, "F"] \arrow{r}{i} & W \arrow{r}{q} \arrow[d, twoheadrightarrow, "F'"] & U'' \arrow{r} & 1 \\
& \Ga \arrow[r, equals] & \Ga &&
\end{tikzcd}
\]
where $W$ is defined so that the top left square is a pushout diagram. Via the left-multiplication action, $W$ is an fppf $\Ga^n$-torsor over $U''$. Because $U''$ is affine, ${\rm{H}}^1(U'', \Ga^n) = 0$, so there is a {\em scheme-theoretic} section (we do not claim that it is a homomorphism) $s\colon U'' \rightarrow W$. Because $\Mor_{k_s}(X_{k_s}, U''_{k_s})$ is bounded, the subset $\pi \circ \Mor_{k_s}(X_{k_s}, U_{k_s}) := \{\pi \circ g \mid g \in \Mor_{k_s}(X_{k_s}, U_{k_s})\}$ also is. The section $s$ gives an isomorphism $\alpha \colon W \xrightarrow{\sim} \Ga^n \times U''$ of $\Ga^n$-torsors over $U''$. By Lemma \ref{boundednesslargergroup}(i), the family $\mathscr{F} := s\circ \pi \circ \Mor_{k_s}(X_{k_s}, U_{k_s})$ of $k_s$-morphisms $X_{k_s} \rightarrow W_{k_s}$ is bounded. The family $\mathscr{F}$ has the further property that 
\begin{equation}
\label{mor(x,U)boundedpfeqn4}
q \circ \mathscr{F} = \pi \circ \Mor_{k_s}(X_{k_s}, U_{k_s}).
\end{equation}
It follows that $\mathscr{F} \subset i(\Ga^n(X_{k_s}))\cdot j(U_{k_s}(X_{k_s}))$, whence $F' \circ \mathscr{F} \subset F(\Ga^n(X_{k_s}))$. That is, 
\begin{equation}
\label{mor(x,U)boundedpfeqn3}
F'\circ \mathscr{F} \subset F_X({\rm{H}}^0(X_{k_s}, \calO_{X_{k_s}})^n),
\end{equation}
where $F_X \colon {\rm{H}}^0(X_{k_s}, \calO_{X_{k_s}})^n \rightarrow {\rm{H}}^0(X_{k_s}, \calO_{X_{k_s}})$ is the map $(s_1, \dots, s_n) \mapsto F(s_1, \dots, s_n)$.

On the other hand, by Lemma \ref{boundednesslargergroup}(i), the family $F' \circ \mathscr{F}$ of $k_s$-morphisms $X_{k_s} \rightarrow \mathbf{G}_{a,\,k_s}$ is bounded. This means that there is a finite-dimensional $k_s$-vector space $V \subset {\rm{H}}^0(X_{k_s}, \calO_{X_{k_s}})$ such that 
\[
F' \circ \mathscr{F} \subset V.
\]
By Proposition \ref{boundedness} (and using Lemma \ref{nontrivialzeroessepble} to ensure that the principal part of $F$ has no nontrivial zeroes over $k_s$), it follows that there is a finite-dimensional $k_s$-vector space $T \subset {\rm{H}}^0(X_{k_s}, \calO_{X_{k_s}})$ such that $F_X^{-1}(F' \circ \mathscr{F}) \subset T^n$. Combining this with (\ref{mor(x,U)boundedpfeqn3}), we deduce that $F' \circ \mathscr{F} \subset F(T^n)$. Therefore, every element of $\mathscr{F}$ may be modified by an element of $i(T^n)$ to lie in $j(U_{k_s}(X_{k_s}))$. By Lemma \ref{boundednesslargergroup}(i),(iii), after so modifying each element of $\mathscr{F}$, we thereby obtain a family $\mathscr{G} \subset U_{k_s}(X_{k_s})$ such that
$j\circ \mathscr{G}$ is bounded, hence $\mathscr{G}$ is bounded by Lemma \ref{boundednesslargergroup}(ii), and, thanks to (\ref{mor(x,U)boundedpfeqn4}), $\mathscr{G}$ has the property that 
\[
U_{k_s}(X_{k_s}) \subset U'_{k_s}(X_{k_s})\cdot \mathscr{G}.
\]
Since $U'_{k_s}(X_{k_s})$ is bounded, Lemma \ref{boundednesslargergroup}(iii) then implies that $U_{k_s}(X_{k_s})$ is bounded, as desired.
\end{proof}

\section{Constructing the moduli spaces: the unipotent case}
\label{modulispacesunipsection}

In this section we will construct the moduli spaces whose existence is asserted in Theorem \ref{maintheorem} in the case that $G = U$ is wound unipotent. Choose a $k'$-scheme isomorphism $\phi'\colon U_{k'} \xrightarrow{\sim} \A^d_{k'}$ for some finite extension field $k'/k$. Let $X$ be a $k$-scheme, and let $V \subset {\rm{H}}^0(X, \calO_X)$ be a finite-dimensional $k$-vector subspace. Consider the functor $\calMor_{{\phi'},\,V}(X, U)\colon \{\mbox{$k$-schemes}\} \rightarrow \{\mbox{sets}\}$ which sends a $k$-scheme $T$ to the set of all $T$-morphisms $f\colon X_T \rightarrow U_T$ such that ${\phi'}_T\circ f_{k'}\colon X_{T_{k'}} \rightarrow \A^d_{T_{k'}}$ lies in $({\rm{H}}^0(T_{k'}, \calO_{T_{k'}}) \otimes_k V)^d$. We first prove that $\calMor_{{\phi'},\,V}(X, U)$ is represented by a finite type $k$-scheme.

\begin{lemma}
\label{Mor_phi,V}
For any finite-dimensional $k$-vector subspace $V \subset {\rm{H}}^0(X, \calO_X)$, the subfunctor $\calMor_{\phi',\,V}(X, U) \subset \calMor(X, U)$ is represented by a $k$-scheme of finite type.
\end{lemma}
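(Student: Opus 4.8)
The plan is to realize $\calMor_{\phi',\,V}(X, U)$ as a closed subfunctor of a finite-type affine $k$-scheme obtained by Weil restriction, where the finite-type scheme is an affine space and the closed condition is fppf descent along $k'/k$.

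First I would represent the ``naive'' bounded functor over $k'$. For a $k'$-scheme $S$, a $k'$-morphism $X_S \to U_S$ is, via $\phi'$, the same as a morphism $X_S \to \A^d_S$, i.e. a $d$-tuple of global sections of $\calO_{X_S}$; the boundedness condition restricts this tuple to lie in the $k$-subspace $({\rm{H}}^0(S, \calO_S) \otimes_k V)^d = ({\rm{H}}^0(S, \calO_S) \otimes_{k'} V_{k'})^d$ of ${\rm{H}}^0(X_S, \calO_{X_S})^d$, the relevant containment being injective because a $k$-basis of $V$ remains ${\rm{H}}^0(S, \calO_S)$-independent in the tensor product, which injects into ${\rm{H}}^0(X_S, \calO_{X_S})$. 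Hence $S \mapsto \{\mbox{bounded } k'\mbox{-morphisms } X_S \to U_S\}$ is isomorphic to $S \mapsto ({\rm{H}}^0(S, \calO_S) \otimes_{k'} V_{k'})^d$, represented by the affine space $E := \A^{d\dim_k V}_{k'}$ (the $d$-fold power of the vector group attached to $V_{k'}$). Weil restricting along the finite extension $k'/k$, I obtain a finite-type affine $k$-scheme $\R := \R_{k'/k}(E) \cong \A^{d\dim_k V\cdot[k':k]}_k$, with $\R(T) = E(T_{k'})$ the set of bounded $k'$-morphisms $X_{T_{k'}} \to U_{T_{k'}}$.

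Next I would exhibit $\calMor_{\phi',\,V}(X, U)$ as a subfunctor of $\R$ via $f \mapsto f_{k'}$. This map is injective because $T_{k'} \to T$ is faithfully flat, so the fppf cover $X_{T_{k'}} \to X_T$ determines $f$ from $f_{k'}$ (with $U_T$ separated); and the boundedness condition on $f$ is exactly the condition $f_{k'} \in \R(T)$, so the image consists precisely of those $g \in \R(T)$ that descend to a $T$-morphism $X_T \to U_T$. Since the functor $T \mapsto \Mor_T(X_T, U_T)$ is an fppf sheaf, $g$ descends if and only if $\mathrm{pr}_1^* g = \mathrm{pr}_2^* g$ as morphisms over $T_A$, where $A := k' \otimes_k k'$ and $\mathrm{pr}_1, \mathrm{pr}_2\colon k' \to A$ are the two factor maps. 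The plan is then to phrase this descent condition as an equalizer of two genuine $k$-scheme morphisms out of $\R$.

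The main obstacle is that $\phi'$ is only defined over $k'$, so its two pullbacks $\phi'_{A,1} := \mathrm{pr}_1^*\phi'$ and $\phi'_{A,2} := \mathrm{pr}_2^*\phi'$ give different coordinate systems on $U_A \cong \A^d_A$, and there is no reason for $\mathrm{pr}_2^* g$ to be bounded with respect to $\phi'_{A,1}$. To fix this, let $\theta := \phi'_{A,1}\circ(\phi'_{A,2})^{-1}$, an $A$-automorphism of $\A^d_A$; since $\theta$ is a fixed polynomial map with coefficients in the finite $k$-algebra $A$ and $V$ is finite-dimensional, applying $\theta$ carries $({\rm{H}}^0(T_A, \calO_{T_A})\otimes_k V)^d$ into $({\rm{H}}^0(T_A, \calO_{T_A})\otimes_k V')^d$ for a single larger finite-dimensional $k$-subspace $V' \supset V$ of ${\rm{H}}^0(X, \calO_X)$ depending only on $\theta$ and $V$ (spanned by the bounded-degree products of a basis of $V$). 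Then both $\alpha_1(g) := \phi'_{A,1}\circ \mathrm{pr}_1^* g = \mathrm{pr}_1^*(\phi'\circ g)$ and $\alpha_2(g) := \phi'_{A,1}\circ \mathrm{pr}_2^* g = \theta\circ \mathrm{pr}_2^*(\phi'\circ g)$ define $k$-scheme morphisms $\alpha_1, \alpha_2\colon \R \to \R' := \R_{A/k}(\A^{d\dim_k V'}_A)$ into a finite-type affine, hence separated, $k$-scheme. Because $\phi'_{A,1}$ is an isomorphism, $\alpha_1(g) = \alpha_2(g)$ is equivalent to $\mathrm{pr}_1^* g = \mathrm{pr}_2^* g$, so $\calMor_{\phi',\,V}(X, U)$ is exactly the equalizer of $\alpha_1$ and $\alpha_2$. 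The equalizer of two morphisms into a separated scheme is a closed subscheme of the source, so $\calMor_{\phi',\,V}(X, U)$ is represented by a closed subscheme of the finite-type $k$-scheme $\R$, and is therefore itself of finite type.
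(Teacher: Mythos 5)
Your proof is correct, and its first two steps coincide with the paper's: both reduce, by descent along $k'/k$, to describing bounded $d$-tuples of sections over $T_{k'}$, and both represent these by the same finite type affine $k$-scheme (your $\R = \R_{k'/k}(E)$ is exactly the paper's vector group attached to the $k$-vector space $(k'\otimes_k V)^d$). The genuine divergence is in how the descent condition is shown to cut out a closed subscheme, which is the heart of the lemma. The paper writes the condition $(\pi_1^*\phi')^{-1}\circ(\pi_1^* g') = (\pi_2^*\phi')^{-1}\circ(\pi_2^* g')$ in coordinates, obtaining on each coordinate an equation $\sum_{r} F_r(\mathbf{t})G_r = 0$ with fixed sections $G_r \in {\rm{H}}^0(X,\calO_X)$ and fixed polynomials $F_r$ over $k'\otimes_k k'$, and then converts this into polynomial equations over $k$ using the fact that the ${\rm{H}}^0(S,\calO_S)$-linear relations among the $G_r$ are base-changed from the $k$-linear relations. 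You avoid coordinates entirely: you absorb the transition automorphism $\theta = \mathrm{pr}_1^*\phi' \circ (\mathrm{pr}_2^*\phi')^{-1}$ into an enlargement $V \subset V'$ of the bounding space, so that both pullbacks become morphisms $\alpha_1, \alpha_2\colon \R \rightarrow \R'$ into a single separated finite type $k$-scheme, and then quote the standard fact that the equalizer of two morphisms into a separated scheme is closed in the source. What your route buys is a cleaner, equation-free closedness argument; what it costs is the auxiliary target $\R'$ and the verification (which you should make explicit) that $\alpha_2$ is a well-defined natural transformation, hence by Yoneda a scheme morphism: this requires that $\theta\circ\mathrm{pr}_2^*(\phi'\circ g)$ have a \emph{unique} expression as a tuple in ${\rm{H}}^0(T_A,\calO_{T_A})\otimes_k V'$, i.e. the injectivity of ${\rm{H}}^0(T_A,\calO_{T_A})\otimes_k V' \rightarrow {\rm{H}}^0(X_{T_A},\calO_{X_{T_A}})$, which is the same flat base-change fact for global sections that powers the paper's relations argument -- so the two proofs ultimately run on the same fuel. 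One trivial correction: $V'$ must also contain the constants (the empty product), since $\theta$ need not preserve the origin.
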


\begin{proof}
Let $\pi_i\colon {\rm{Spec}}(k' \otimes_k k') \rightarrow {\rm{Spec}}(k')$, $i = 1, 2$, denote the projection morphisms. For a $k$-scheme $T$, descent theory implies that a $T$-morphism $h\colon X_T \rightarrow U_T$ is the same thing as a $T_{k'}$-morphism $h'\colon X_{T_{k'}} \rightarrow U_{T_{k'}}$ such that $\pi_1^*g' = \pi_2^*g'\colon X_{T_{k' \otimes_k k'}} \rightarrow U_{T_{k' \otimes_k k'}}$. Using the isomorphism $\phi'$, this is the same as a $T_{k'}$-morphism $g'\colon X_{T_{k'}} \rightarrow \A^d_{T_{k'}}$ such that 
\begin{equation}
\label{Mor_phi,Vpfeqn1}
(\pi_1^*\phi')^{-1}\circ (\pi_1^*g') = (\pi_2^*\phi')^{-1}\circ(\pi_2^*g') \colon X_{T_{k' \otimes_k k'}} \rightarrow \A^d_{T_{k' \otimes_k k'}}.
\end{equation}

A morphism $g' \colon X_{T_{k'}} \rightarrow \A^d_{T_{k'}}$ is just a $d$-tuple of global sections of ${\rm{H}}^0(X_{T_{k'}}, \calO_{X_{T_{k'}}})$. Locally on $T$, this last space equals $(k' \otimes_k {\rm{H}}^0(X, \calO_X)) \otimes_k {\rm{H}}^0(T, \calO_T)$, and the requirement that the morphism $g$ to which $g'$ descends lies in $\calMor_{\phi',\,V}(X, U)$ is exactly that each entry of this $d$-tuple lies in $(k' \otimes_k V) \otimes_k {\rm{H}}^0(T, \calO_T)$. The functor of such $d$-tuples is the vector group associated to the finite-dimensional $k$-vector space $k' \otimes_k V$. We must check that the condition (\ref{Mor_phi,Vpfeqn1}) defines a closed subscheme of this vector group.

In fact, choosing $k$-bases $\{e_i\}_{i \in I}$ and $\{f_j\}_{j \in J}$ for $k'$ and $V$, respectively, so that the coordinates on the vector group are $\mathbf{t} := (t_{i,\,j})_{i \in I, j \in J}$, the equation (\ref{Mor_phi,Vpfeqn1}) amounts (after composing both sides with $\pi_2^*\phi'$) on each of the $d$-coordinates of $\A^d_{k' \otimes k'}$ to an equation of the form
\begin{equation}
\label{Mor_phi,Vpfeqn2}
\sum_{r=1}^N F_r(\mathbf{t}) G_r = 0,
\end{equation}
for some {\em fixed} functions $G_r \in {\rm{H}}^0(X, \calO_X)$ and some {\em fixed} polynomials $F_r \in (k' \otimes_k k')[t_{i,\,j}\mid i \in I, j \in J]$. We must show that such an equation translates to some polynomial equations over $k$ in the $t_{i,\,j}$. 

First we note that, for any $k$-scheme $S$, the ${\rm{H}}^0(S, \calO_S)$-linear relations among the $G_r$ are obtained via tensor product from the $k$-relations among the $G_r$. Indeed, this follows from the fact that, locally on $S$, ${\rm{H}}^0(X_S, \calO_{X_s}) = {\rm{H}}^0(S, \calO_S) \otimes_k {\rm{H}}^0(X, \calO_X)$. Applying this with $S := T_{k' \otimes_k k'}$, we find that (\ref{Mor_phi,Vpfeqn2}) is equivalent to a collection of polynomial equations in the $t_{i,\,j}$ with coefficients in $k' \otimes_k k'$. Working coordinate by coordinate with respect to a $k$-basis for $k' \otimes_k k'$, we then find that such equations are in turn equivalent to a collection of polynomial equations over $k$ in the $t_{i,\,j}$. This completes the proof of the lemma.
\end{proof}

We will also require the following lemma relating the schemes $\calMor_{\phi',\,V}(X, U)$ as $V$ varies. Note that, for $V \subset W \subset {\rm{H}}^0(X, \calO_X)$ finite-dimensional, there is a natural inclusion $\calMor_{\phi',\,V}(X, U) \hookrightarrow \calMor_{\phi',\,W}(X, U)$.

\begin{lemma}
\label{MorphiVclosed}
For finite-dimensional $k$-vector subspaces $V \subset W \subset {\rm{H}}^0(X, \calO_X)$, the natural inclusion $\calMor_{\phi',\,V}(X, U) \hookrightarrow \calMor_{\phi',\,W}(X, U)$ is a closed embedding.
\end{lemma}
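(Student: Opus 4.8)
The plan is to realize both schemes as closed subschemes of ambient vector groups in a compatible way, so that the inclusion becomes a base change of a closed embedding. Recall from the proof of Lemma \ref{Mor_phi,V} that $\calMor_{\phi',\,V}(X, U)$ is constructed as a closed subscheme (cut out by the descent condition (\ref{Mor_phi,Vpfeqn1})) of the vector group $E_V$ associated to the finite-dimensional $k$-vector space $(k' \otimes_k V)^{\oplus d}$; here $E_V$ represents the functor $T \mapsto \big((k'\otimes_k V)\otimes_k {\rm{H}}^0(T, \calO_T)\big)^d$, which records the $d$-tuple of coordinates of $\phi'_T \circ f_{k'}$. Likewise $\calMor_{\phi',\,W}(X, U)$ is a closed subscheme of the vector group $E_W$ associated to $(k' \otimes_k W)^{\oplus d}$.

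First I would observe that the inclusion $V \hookrightarrow W$ induces a closed embedding $E_V \hookrightarrow E_W$ of vector groups. Indeed, since $k'/k$ is (finite, hence) free, $k' \otimes_k V \hookrightarrow k' \otimes_k W$ is a $k$-linear injection; choosing a $k$-linear complement exhibits $E_V$ as the vanishing locus inside $E_W$ of the coordinate functions corresponding to that complement, which is visibly a closed embedding.

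Next I would check that these embeddings are compatible with the natural inclusion $\calMor_{\phi',\,V}(X, U) \hookrightarrow \calMor_{\phi',\,W}(X, U)$, and more precisely that
\[
\calMor_{\phi',\,V}(X, U) = \calMor_{\phi',\,W}(X, U) \times_{E_W} E_V
\]
as subfunctors of $E_W$. This is immediate from the functorial descriptions: a $T$-morphism $f \in \calMor_{\phi',\,W}(X, U)(T)$ is exactly a morphism whose coordinate $d$-tuple lies in $\big((k'\otimes_k W)\otimes_k {\rm{H}}^0(T, \calO_T)\big)^d$, and such an $f$ lies in $\calMor_{\phi',\,V}(X, U)(T)$ if and only if that $d$-tuple already lies in the subspace $\big((k'\otimes_k V)\otimes_k {\rm{H}}^0(T, \calO_T)\big)^d$, i.e. if and only if the corresponding $T$-point of $E_W$ factors through the closed subscheme $E_V$ (this uses that $E_V \hookrightarrow E_W$ is a monomorphism, so that the fibre product is simply an intersection). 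Since all four functors are representable, Yoneda's Lemma turns this into an isomorphism of schemes identifying the natural inclusion with the projection $\calMor_{\phi',\,W}(X, U) \times_{E_W} E_V \to \calMor_{\phi',\,W}(X, U)$.

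Finally, because $E_V \hookrightarrow E_W$ is a closed embedding and closed embeddings are stable under base change, this projection is a closed embedding, which finishes the argument. There is no serious obstacle here; the only point requiring genuine care is the scheme-theoretic (as opposed to merely set-theoretic) identification in the displayed fibre-product formula, and this is precisely what the base-change formalism is designed to handle. (Alternatively, one could bypass the fibre product and argue by cancellation: the composite $\calMor_{\phi',\,V}(X, U) \hookrightarrow E_V \hookrightarrow E_W$ is a closed embedding which factors through the closed embedding $\calMor_{\phi',\,W}(X, U) \hookrightarrow E_W$, so, the latter being separated, the inclusion $\calMor_{\phi',\,V}(X, U) \hookrightarrow \calMor_{\phi',\,W}(X, U)$ is itself a closed embedding.)
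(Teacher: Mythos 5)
Your proof is correct, and it reaches the same underlying linear-algebra fact as the paper by a more structural route. The paper argues directly with the functorial criterion for closed immersions: given a $T$-point $g$ of $\calMor_{\phi',\,W}(X, U)$, it extends a $k$-basis of $V$ to one of $W$ and observes --- after extending scalars to $k'$ and descending --- that the locus in $T$ where $g$ lies in $\calMor_{\phi',\,V}(X, U)$ is the vanishing locus of the coefficients of the complementary basis vectors, hence closed. You instead use that coefficient-vanishing observation exactly once, to obtain the closed embedding $E_V \hookrightarrow E_W$ of ambient vector groups, and then deduce the lemma formally from the identity $\calMor_{\phi',\,V}(X, U) = \calMor_{\phi',\,W}(X, U) \times_{E_W} E_V$ together with stability of closed immersions under base change. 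What your route buys: everything stays over $k$ (the extension $k'$ is absorbed into the definition of $E_V$ and $E_W$), so you avoid the paper's step ``it suffices to check this after scalar extension to $k'$,'' which tacitly invokes descent of closed subschemes along $T_{k'} \rightarrow T$. The one point you should make explicit is that the coordinate map $\calMor_{\phi',\,W}(X, U) \rightarrow E_W$ is well defined as a map of functors, i.e.\ that a $T$-point has a \emph{unique} coordinate representative in $\bigl((k' \otimes_k W) \otimes_k {\rm H}^0(T, \calO_T)\bigr)^d$; this is the injectivity, locally on $T$, of $(k' \otimes_k W) \otimes_k {\rm H}^0(T, \calO_T) \rightarrow {\rm H}^0(X_{T_{k'}}, \calO_{X_{T_{k'}}})$ furnished by flat base change, and it is also exactly what justifies the ``if and only if'' in your fibre-product identification. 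Since this injectivity is implicit in the construction in the proof of Lemma \ref{Mor_phi,V}, to which you appeal, both your main argument and your cancellation variant are complete.
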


\begin{proof}
Let $T$ be a $k$-scheme, and consider a $T$-point $g \colon X_T \rightarrow U_T$ of $\calMor_{\phi',\,W}(X, U)$. We must show that the locus where $T$ maps into $\calMor_{\phi',\,V}(X, U)$ is a closed subscheme of $T$ (more precisely, there is a closed subscheme $S$ of $T$ such that a $T$-scheme $T'$ factors through $S$ precisely when $g|_{T'} \in \calMor_{\phi',\,V}(X, U)$). It suffices to check this after scalar extension to $k'$. Choosing a $k$-basis for $V$ and completing it to a $k$-basis for $W$ by adding the elements $\{e_i\}_{i \in I}$, the condition that a morphism factor through $\calMor_{\phi',\,V}(X, U)$ is that the coefficients of the $e_i$ vanish. This defines a closed subscheme of $T$.
\end{proof}

We are now ready to construct the moduli spaces $\calMor(X, U)^+$ for $U$ wound unipotent. Recall the notion of the maximal geometrically reduced closed subscheme of a locally finite type $k$-scheme \cite[Lem.\,C.4.1]{cgp}.

\begin{theorem}
\label{Mor(X,U)^+}
For a field $k$, a geometrically reduced $k$-scheme $X$ of finite type, and a wound unipotent $k$-group scheme $U$, there is a unique subfunctor $\calMor(X, U)^+ \subset \calMor(X, U)$ with the following two properties:
\begin{itemize}
\item[(i)] The inclusion $\calMor(X, U)^+ \subset \calMor(X, U)$ induces an equality on $T$-points for every geometrically reduced $k$-scheme $T$.
\item[(ii)] The functor $\calMor(X, U)^+$ is represented by a smooth $k$-group scheme.
\end{itemize}
Furthermore, the scheme in (ii), which we also denote $\calMor(X, U)^+$, is a finite type unipotent $k$-group scheme with wound identity component.
\end{theorem}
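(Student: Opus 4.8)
The plan is to use the boundedness result (Proposition \ref{mor(x,U)bounded}) to pin down a single representable approximation to $\calMor(X, U)$ and then cut out its smooth part. First I would fix a finite extension $k'/k$ and an isomorphism $\phi'\colon U_{k'} \xrightarrow{\sim} \A^d_{k'}$, and invoke Proposition \ref{mor(x,U)bounded} to produce a finite-dimensional $k$-subspace $V \subset {\rm{H}}^0(X, \calO_X)$ such that every $k_s$-morphism $X_{k_s} \to U_{k_s}$ is bounded by $V$ (independence of the chosen trivialization being guaranteed by Lemma \ref{boundednessindofisom}). By Lemma \ref{Mor_phi,V} the subfunctor $M_V := \calMor_{\phi',\,V}(X, U) \subset \calMor(X, U)$ is then represented by a finite type $k$-scheme that is a closed subscheme of a vector group. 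I would define $\calMor(X, U)^+$ to be the maximal geometrically reduced closed subscheme $(M_V)^{\mathrm{grd}}$ of $M_V$ \cite[Lem.\,C.4.1]{cgp}, viewed as a subfunctor of $\calMor(X, U)$ via the closed immersions $(M_V)^{\mathrm{grd}} \hookrightarrow M_V \hookrightarrow \calMor(X, U)$.

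The crux is property (i): that $M_V$, and hence $(M_V)^{\mathrm{grd}}$, captures \emph{all} morphisms over geometrically reduced test schemes. Since $\calMor(X, U)$ and $M_V$ are locally of finite presentation, it suffices to treat finite type geometrically reduced $T$. Given $f \in \calMor(X, U)(T)$, the finitely many global sections defining $f$ show that $f \in M_W(T)$ for some finite-dimensional $W \supseteq V$, and by Lemma \ref{MorphiVclosed} the inclusion $M_V \hookrightarrow M_W$ is a closed embedding. Now the $k_s$-points of $T$ are Zariski dense, and each specializes $f$ to an element of $\Mor_{k_s}(X_{k_s}, U_{k_s})$, which is bounded by $V$ by our choice of $V$; thus the classifying map $T \to M_W$ carries a dense set of points into the closed subscheme $M_V$. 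As $T$ is reduced, its set-theoretic image lies in $M_V$ and $f$ factors through $M_V$, so $M_V(T) = \calMor(X, U)(T)$. The universal property of the maximal geometrically reduced closed subscheme then yields $(M_V)^{\mathrm{grd}}(T) = M_V(T) = \calMor(X, U)(T)$ for every geometrically reduced $T$, which is (i); uniqueness is immediate from Yoneda applied to the category of geometrically reduced $k$-schemes.

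For property (ii) I would first upgrade $P := (M_V)^{\mathrm{grd}}$ to a group scheme. Because $P$ is geometrically reduced, so are $P$ and $P \times_k P$, so by (i) the multiplication, inversion and unit maps of the group functor $\calMor(X, U)$, evaluated on these test schemes, land in $P$; this endows $P$ with a $k$-group scheme structure, the group axioms being inherited from $\calMor(X, U)$ since $P \hookrightarrow \calMor(X, U)$ is a monomorphism of functors. Being a geometrically reduced group scheme of finite type over a field, $P$ is automatically smooth, and it is of finite type since it is closed in $M_V$. This establishes (ii) (together with (i) from the previous paragraph).

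It remains to identify $P$ as a finite type unipotent group with wound identity component. Affineness is free: $P$ is a closed subscheme of the vector group containing $M_V$. For unipotence I would argue on points: since $U$ is unipotent, $U(k_s)$ consists of unipotent elements and so has bounded $p$-power exponent, whence every element of $P(k_s) = \Mor_{k_s}(X_{k_s}, U_{k_s})$ (pointwise product in $U$) has $p$-power order and is therefore a unipotent element of the affine group $P_{k_s}$; as the unipotent locus is closed and $P(k_s)$ is dense in the smooth scheme $P_{k_s}$, every geometric point of $P$ is unipotent, so $P$ is unipotent. Finally, for woundness of $P^0$ it suffices to show there is no nontrivial homomorphism $\Ga \to P$: such a homomorphism is a morphism $g\colon X \times \A^1 \to U$ which, over the base $X$, is a homomorphism $\Ga_X \to U_X$, and restricting to a generic point $\eta$ of $X$, whose residue field is separable over $k$ because $X$ is geometrically reduced, the group $U_{k(\eta)}$ is still wound by \cite[Prop.\,B.3.2]{cgp}, forcing $g_\eta$ to be trivial; then $g$ is trivial by reducedness of $X$ and separatedness of $U$. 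I expect the main obstacle to be the verification of property (i) — that passing to the bounded model $M_V$ and then to its maximal geometrically reduced subscheme loses no morphisms over geometrically reduced bases — since this is what makes $\calMor(X, U)^+$ canonical and is precisely where the density of $k_s$-points and the boundedness of Proposition \ref{mor(x,U)bounded} are indispensable.
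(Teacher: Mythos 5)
Your proposal is correct and follows essentially the same route as the paper: the same bounded model $\calMor_{\phi',\,V}(X, U)$ obtained from Proposition \ref{mor(x,U)bounded} and Lemma \ref{Mor_phi,V}, the same reduction to finite type $T$ followed by the density-of-$k_s$-points plus closed-embedding argument (Lemma \ref{MorphiVclosed}) for property (i), and the same passage to the maximal geometrically reduced closed subscheme, which is automatically a smooth group scheme, for property (ii). The only (immaterial) variations are in the concluding verifications: the paper deduces unipotence from the fact that the group is $p^n$-torsion and proves woundness of the identity component by restricting the associated map $X \times \A^1 \to U$ to the dense set of $k_s$-points of $X$, whereas you use pointwise unipotence of elements together with closedness of the unipotent locus, and for woundness restrict to the generic points of $X$ (invoking separability of the function fields and \cite[Prop.\,B.3.2]{cgp}) combined with the homomorphism criterion for woundness.
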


The statement that a finite type smooth $k$-group scheme is unipotent, in characteristic $p$, amounts to the statement that its identity component is unipotent, and its \'etale component group is of $p$-power order.

\begin{proof}
Choose a $k'$-scheme isomorphism $\phi'\colon U_{k'} \xrightarrow{\sim} \A^d_{k'}$ for some finite extension field $k'/k$. By Proposition \ref{mor(x,U)bounded}, there is a finite-dimensional $k$-vector space $V \subset {\rm{H}}^0(X, \calO_X)$ such that for every $k_s$-morphism $X_{k_s} \rightarrow U_{k_s}$, upon extending scalars via $k'$ and using the isomorphism $\phi'$, the associated $d$-tuple of global sections of $X_{k_s \otimes_k k'}$ all lie in $(k_s \otimes_k k') \otimes_k V$. That is, every $k_s$-morphism $X_{k_s} \rightarrow U_{k_s}$ defines a $k_s$-point of $\calMor_{\phi',\,V}(X, U)$. 

We now claim that, for any geometrically reduced $k$-scheme $T$, a $T$-morphism $X_T \rightarrow U_T$ defines a $T$-point of $\calMor_{\phi',\,V}(X, U) \subset \calMor(X, U)$. It suffices to treat the case of affine $T$, and because $U$ is of finite type, by writing any geometrically reduced $k$-algebra as a filtered direct limit of geometrically reduced $k$-algebras of finite type, we may assume that $T$ is of finite type. Then a $T$-morphism $X_T \rightarrow U_T$ defines a $T$-point of $\calMor_{\phi',\,W}(X, U)$ for some finite-dimensional $W \subset {\rm{H}}^0(X, \calO_X)$, and we may enlarge $W$ in order to assume that $V \subset W$. The $k_s$-points of $T$ factor through the subscheme $i \colon \calMor_{\phi',\,V}(X, U) \rightarrow \calMor_{\phi',\,W}(X, U)$. Since $T$ is geometrically reduced, $T(k_s)$ is Zariski dense in $T$. Because $i$ is a closed embedding by Lemma \ref{MorphiVclosed}, it follows that $T$ itself factors through $\calMor_{\phi',\,V}(X, U)$, as claimed.

We therefore see that the inclusion $\calMor_{\phi',\,V}(X, U) \subset \calMor(X, U)$ induces an equality on the category of geometrically reduced $k$-schemes. Because maps from geometrically reduced $k$-schemes of finite type into $\calMor_{\phi',\,V}(X, U)$ factor uniquely through its maximal geometrically reduced closed subscheme $\calMor_{\phi',\,V}(X, U)^\sharp \subset \calMor(X, U)$, we see that this finite type, geometrically reduced $k$-scheme agrees with the functor $\calMor(X, U)$ on geometrically reduced $k$-schemes. It follows that it is a $k$-group scheme, hence smooth, because it is geometrically reduced.

To show that $\calMor(X, U)^+$ is unipotent, we note that it is $p^n$-torsion for some $n > 0$, because $U$ is. It only remains to show that its identity component is wound. To show this, suppose given a $k$-scheme morphism $f\colon \A^1_k \rightarrow \calMor(X, U)^+$ such that $f(0) = 1$. We must show that $f = 1$. By the very definition of $\calMor(X, U)^+$, such an $f$ is the same as a $k$-morphism $g\colon X \times \A^1 \rightarrow U$ such that $g|_{X \times 0} = 1$. For each $x \in X(k_s)$, $g|_{x \times \A^1}$ defines a $k_s$-map $\A^1_{k_s} \rightarrow U_{k_s}$ sending $0$ to $1$. Because $U_{k_s}$ is wound, it follows that $g|_{x \times \A^1} = 1$. Since $X$ is geometrically reduced, $X(k_s)$ is Zariski dense in $X$, so it follows that $g = 1$.
\end{proof}

\section{Constructing the moduli spaces in general}
\label{constructiongmoduligeneralsection}

In this section, we construct the schemes $\calMor(X, G)^+$ in general (that is, beyond the case in which $G$ is wound unipotent, which was treated in the previous section). For technical reasons, it will be convenient to first deal with moduli spaces of {\em pointed} morphisms.

\begin{definition}
For a pair of pointed $k$-schemes $(X, x)$ and $(Y, y)$ (that is, $X$ and $Y$ are $k$-schemes, and $x \in X(k), y \in Y(k)$), the functor
\[
\calMor((X, x), (Y, y))\colon \{\mbox{$k$-schemes}\} \rightarrow \{\mbox{sets}\}
\]
is defined by the formula $T \mapsto \Mor_T((X_T, x_T), (Y_T, y_T))$ -- that is, $T$ is sent to the set of $T$-morphisms $f\colon X_T \rightarrow Y_T$ such that $f(x_T) = y_T$.
\end{definition}

We begin by treating pointed morphisms into semiabelian varieties. Of particular importance, from a purely technical standpoint, will be the fact that these moduli spaces are \'etale.

\begin{theorem}
\label{pointedmorsemiabelianvar}
For a semiabelian variety $S$ over a field $k$, and a connected and geometrically reduced $k$-scheme $X$ of finite type equipped with a point $x \in X(k)$, there is a unique subfunctor $\calMor((X, x), (S, 0))^+ \subset \calMor((X, x), (S, 0))$ with the following two properties:
\begin{itemize}
\item[(i)] The inclusion $\calMor((X, x), (S, 0))^+ \subset \calMor((X, x), (S, 0))$ is an equality on $T$-points for every geometrically reduced $k$-scheme $T$.
\item[(ii)] The functor $\calMor((X, x), (S, 0))^+$ is represented by a smooth $k$-group scheme.
\end{itemize}
Furthermore, the scheme described in (ii), which we also denote $\calMor((X, x), (S, 0))^+$, is an \'etale $k$-group scheme with finitely-generated group of $k_s$-points.
\end{theorem}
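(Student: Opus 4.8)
The uniqueness of $\calMor((X,x),(S,0))^+$ is formal, exactly as for Theorem~\ref{maintheorem}: a subfunctor satisfying (i) is determined on geometrically reduced $k$-schemes, so Yoneda applied to that category forces two functors satisfying (i) and (ii) to coincide. I would therefore concentrate on constructing the functor and verifying its properties. A useful preliminary observation is that, because $X$ is connected and carries the $k$-rational point $x$, it is automatically geometrically connected; hence $X_{k_s}$ is a single connected, geometrically reduced, finite type $k_s$-scheme, and I may compute everything over $k_s$ equivariantly for $\Gal(k_s/k)$.

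The heart of the matter is to produce the correct Galois module. Set $M := \Mor((X_{k_s},x),(S_{k_s},0))$. I claim that $M$ is a finitely generated abelian group carrying a continuous $\Gal(k_s/k)$-action. To see this I use the defining extension $1 \to \mathbf{T} \to S \to A \to 1$ of an abelian variety $A$ by a torus $\mathbf{T}$, together with the left-exactness of the additive functor $\calMor((X,x),-)$, to obtain an exact sequence
\[
0 \longrightarrow \Mor((X_{k_s},x),(\mathbf{T}_{k_s},1)) \longrightarrow M \longrightarrow \Mor((X_{k_s},x),(A,0)).
\]
A pointed $k_s$-morphism into $\mathbf{T} \cong \Gm^r$ is an $r$-tuple of global units trivial at $x$, so the leftmost group is finitely generated by Rosenlicht's unit theorem (the units on a geometrically connected, geometrically reduced finite type scheme, modulo constants, form a finitely generated group). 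For the rightmost group, maps into an abelian variety are governed by the (generalized) Albanese: any pointed morphism $(X_{k_s},x)\to(A,0)$ factors through $\mathrm{Alb}(X_{k_s})$, so $\Mor((X_{k_s},x),(A,0)) = \Hom(\mathrm{Alb}(X_{k_s}),A)$, which is finitely generated because homomorphisms between semiabelian varieties form a finitely generated group. Hence $M$, an extension of a subgroup of a finitely generated group by a finitely generated group, is finitely generated. Let $E$ denote the étale $k$-group scheme (locally of finite type) associated to the finitely generated Galois module $M$, so that $E(k_s)=M$; being étale it is smooth, and it has finitely generated group of $k_s$-points by construction.

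There is a natural transformation $\iota\colon E \to \calMor((X,x),(S,0))$: a $T$-point of $E$ is a locally constant $M$-valued function on $T$, which étale-locally on $T$ is a fixed $m \in M$ defined over a finite separable extension $k'/k$; the associated $k'$-morphism $X_{k'}\to S_{k'}$, base changed to $T_{k'}$, glues by Galois descent to a $T$-morphism. I would then define $\calMor((X,x),(S,0))^+ := \iota(E)$. Since sections of the étale sheaf attached to $M$ factor through $T_{\mathrm{red}}$ and $S$ is separated, $\iota$ is a monomorphism of functors on \emph{all} $k$-schemes; thus $\calMor((X,x),(S,0))^+$ is genuinely a subfunctor of $\calMor((X,x),(S,0))$ represented by the smooth (indeed étale) group scheme $E$, which gives (ii) and the finiteness assertions. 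It remains to establish (i), namely that $\iota_T$ is \emph{surjective} for every geometrically reduced $T$.

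This surjectivity is a rigidity statement — that a family of pointed morphisms over a connected geometrically reduced base is constant — and it is the main obstacle. Because agreement of two $T$-morphisms into the separated scheme $S$ is a closed condition and $T(k_s)$ is Zariski dense in the geometrically reduced $T$ (the mechanism used throughout the paper), it suffices to show that the classifying map $T(k_s)\to M$, $t\mapsto f_t$, is locally constant. I would prove this by the same torus/abelian-variety dévissage: for the toric part it is the relative form of Rosenlicht's theorem (the ``type'' of a unit on the fiber $X_t$ is locally constant in $t$, and units modulo base units form an étale sheaf); for the abelian-variety part one first reduces to $X$ smooth, extends the family $f\colon X_T\to A$ to a smooth compactification $\overline{X}_T$ using that rational maps from smooth varieties to abelian varieties are everywhere defined, and then invokes the classical rigidity of abelian schemes — that homomorphisms $\mathrm{Alb}(\overline{X})_T \to A_T$ over a connected reduced base are locally constant. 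The technical crux is precisely this abelian-variety sub-case for non-proper $X$: controlling the family version of the extension to a compactification and the passage to the relative Albanese. Once these two rigidity statements are in hand, the extension $1\to\mathbf{T}\to S\to A\to 1$ assembles them and shows that $\iota_T$ is bijective on geometrically reduced $T$, giving (i) and completing the proof.
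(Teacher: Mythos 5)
Your overall architecture---descend to $k=k_s$, build an \'etale group scheme $E$ out of the group $M=\Mor((X_{k_s},x),(S_{k_s},0))$, inject $E$ into $\calMor((X,x),(S,0))$, and reduce everything to the rigidity statement that a pointed family over a connected geometrically reduced base is constant in the base variable---is the same as the paper's, and your toric ingredients are sound (Rosenlicht's unit theorem on the product $X\times T$ gives $u(x,t)=v(x)w(t)$, and pointedness kills the $t$-dependence). The genuine gap is in the abelian-variety half of your rigidity d\'evissage. First, your plan to ``extend the family $f\colon X_T\to A$ to a smooth compactification $\overline{X}_T$'' needs resolution of singularities, which is unavailable in positive characteristic once $\dim X\geq 4$. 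Second, and in every characteristic: Weil's extension theorem (a rational map from a \emph{regular} scheme to an abelian variety is everywhere defined) is being applied to $\overline{X}\times T$, and this scheme is regular only when $T$ is smooth---but $T$ is merely geometrically reduced, and you never reduce to smooth $T$. The extension statement genuinely fails over non-regular sources: for the affine cone $V$ over an elliptic curve $E$, the projection $V\setminus\{\mathrm{vertex}\}\to E$ does not extend across the vertex, even though $V$ is normal. Third, the ``rigidity of abelian schemes'' you invoke presupposes that the family $\overline{X}\times T\to A$ factors through $\mathrm{Alb}(\overline{X})\times T$, i.e.\ a relative Albanese property, which in characteristic $p$ is delicate because $\Pic^0$ of a smooth projective variety can be non-reduced (this last point is avoidable: Mumford's rigidity lemma applied directly to the proper family $\overline{X}\times T\to T$ does the job without mentioning the Albanese). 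Similar remarks apply to your finite-generation argument, which quietly uses the existence of the Albanese for non-proper, possibly singular varieties.

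These defects can be repaired---first reduce to $T$ smooth by a density/chaining argument, then slice $X$ by curves (smooth compactifications of curves do exist over $\overline{k}$), then apply Mumford rigidity to $\overline{C}\times T$---but none of that is in your proposal, and it is considerably more work than what the paper does. The paper avoids the torus/abelian-variety d\'evissage, all compactifications, and all smoothness hypotheses by citing Rosenlicht's results on ``toroidal groups'' (= semiabelian varieties) for arbitrary quasi-projective varieties: Theorem 2 of \cite{rosenlicht} says that any morphism $X\times T\to S$ is a sum $g(x)+h(t)$ of morphisms of the two factors, which together with the two pointedness conditions instantly forces $f=0$; and Theorem 1 of \cite{rosenlicht} gives the finite generation of $M$ modulo constants in one stroke. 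You should either quote these theorems or carry out the reductions sketched above; as written, the abelian-variety case of your rigidity step does not go through.
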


\begin{proof}
We first note that, because $X$ is connected and equipped with a $k$-point, it is geometrically connected. Further, we are free by Galois descent to prove the theorem over $k_s$, so we may assume that $k$ is separably closed. In that case, the theorem asserts in particular that $\calMor((X, x), (S, 0))^+$ is constant. If we construct a constant $k$-scheme representing the restriction of the functor $\calMor((X, x), (S, 0))$ to the category of all {\em geometrically reduced} $k$-schemes, then -- because it is constant -- it is clearly a subfunctor of the functor $\calMor((X, x), (S, 0))$ on the category of {\em all} $k$-schemes. We therefore show that this restricted functor is represented by a constant $k$-group scheme with finitely-generated group of $k$-points.

If we construct a $k$-scheme representing the functor $\calMor((X, x), (S, 0))$ on the category of {\em affine} geometrically reduced $k$-schemes, then it also does so on the category of all geometrically reduced $k$-schemes, because this functor is a Zariski sheaf. Since both the functor $\calMor((X, x), (S, 0))$ restricted to the category of geometrically reduced $k$-schemes and the functor represented by an \'etale (or even a locally finite type) $k$-scheme commute with filtered direct limits of rings, it suffices to prove that the restriction of the functor $\calMor((X, x), (S, 0))$ to the category of geometrically reduced $k$-schemes {\em of finite type} is represented by a constant group scheme with finitely-generated group of $k$-points.

The statement that the restriction of the functor $\calMor((X, x), (S, 0))$ to the category of all geometrically reduced $k$-schemes of finite type is represented by a constant $k$-scheme is equivalent to the statement that, for any nonempty, geometrically connected, and geometrically reduced $k$-scheme $T$ of finite type, and any $k$-morphism $f\colon X \times T \rightarrow S$ such that $f|_{x \times T} = 0$, there is a $k$-morphism $g\colon X \rightarrow S$ such that $f = g\circ \pi_X$, where $\pi_X\colon X \times T \rightarrow X$ is the projection. In order to prove this, in turn, we are free to extend scalars to $\overline{k}$ and thereby assume (for the sake of proving this claim) that $k$ is algebraically closed. Choosing a point $t \in T(k)$ and modifying $f$ by $f|_{X \times t}$, we may assume that $f|_{X \times t} = 0$, and we wish to show that $f = 0$. 

First consider the case in which $X$ and $T$ are (geometrically) integral. Because $S$ is separated, we are free to shrink $X$ and $T$ and thereby assume that they are quasi-projective. The result then follows from \cite[Th.\,2]{rosenlicht}. Now consider the case in which $T$ is integral, but $X$ is only reduced and connected. Let $X_1, \dots, X_n$ be the irreducible components of $X$, and suppose that $x \in X_{i_0}$. Then because $X_{i_0}$ is integral, we deduce that $f|_{X_{i_0} \times T} = 0$. For any component $X_j$ intersecting $X_{i_0}$, let $x' \in X_j \cap X_{i_0}$ be a point of intersection. Then $f|_{x' \times T} = 0$, so once again by the integral case of the lemma, we find that $f|_{X_j \times T} = 0$. Continuing in this manner, and because $X$ is connected, we find that $f$ vanishes on all of $X \times T$. The deduction of the general case from the case when $T$ is integral proceeds via a similar argument. This completes the proof that $\calMor((X, x), (S, 0))^+$ is represented by an \'etale $k$-scheme.

The statement that the constant $k$-scheme $\calMor((X, x), (S, 0))^+$ has finitely-generated group of $k$-points simply says that the group $\Mor((X, x), (S, 0))$ is finitely-generated, and this may be checked over $\overline{k}$, so we may once again assume that $k$ is algebraically closed. The desired assertion is equivalent to the statement that the group of $k$-morphisms $X \rightarrow S$ is finitely-generated modulo constant maps.
Let $X_1, \dots, X_n$ denote the irreducible components of $X$. Then we have an inclusion
\[
\calMor(X, S) \hookrightarrow \prod_{i=1}^n \calMor(X_i, S),
\]
and this map remains injective when $\calMor(X, S)$ and each $\calMor(X_i, S)$ are replaced by their quotients modulo constants, because $X$ is connected. We thereby reduce to the case in which $X$ is (geometrically) integral. Because $S$ is separated, we are free to shrink $X$ and thereby assume that it is quasi-projective. The result then follows from \cite[Th.\,1]{rosenlicht}.
\end{proof}

Similar to the often-used smooth-\'etale site, we now define the geometrically reduced \'etale site of a field.

\begin{definition}
\label{geometricallyreducedetalesite}
Let $k$ be a field. The {\em geometrically reduced \'etale site} of $k$ is the site whose underlying category is the category of all geometrically reduced $k$-schemes, and whose covers are \'etale covers.
\end{definition}

The following two technical lemmas will prove useful.

\begin{lemma}
\label{sesgeometalesite}
Given a sequence
\[
1 \longrightarrow G' \longrightarrow G \xlongrightarrow{f} E
\]
of smooth group schemes over a field $k$, with $E$ \'etale, if the sequence is exact as a sequence of sheaves on the geometrically reduced \'etale site of $k$, then it is exact as a sequence of $k$-group schemes.
\end{lemma}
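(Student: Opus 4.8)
The plan is to identify $G'$ with the scheme-theoretic kernel of $f$ and then upgrade an isomorphism of sheaves to an isomorphism of schemes via Yoneda on the geometrically reduced \'etale site. First I would observe that, since $E$ is \'etale over $k$, its identity section $\Spec(k) \hookrightarrow E$ is an open (and closed) immersion; hence the scheme-theoretic kernel $K := \ker(f) = f^{-1}(0)$ is an open subscheme of $G$, and therefore smooth, being open in the smooth $k$-scheme $G$. It is moreover a $k$-subgroup scheme of $G$. The assertion that the sequence is exact as a sequence of $k$-group schemes is precisely the assertion that the given map $G' \to G$ identifies $G'$ with $K$ as subgroup schemes of $G$.

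Next I would produce the comparison map. The composite $G' \to G \xrightarrow{f} E$ is trivial as a morphism of sheaves on the geometrically reduced \'etale site (this is implicit in our being given an exact \emph{sequence}, so that the image of $G' \to G$ is contained in the kernel of $f$). Since $G'$ is smooth, it is geometrically reduced, hence is itself an object of the site; evaluating the composite at the tautological $G'$-point $\mathrm{id}_{G'}$ shows that $f \circ (G' \to G)$ is the trivial morphism of schemes, so $G' \to G$ factors uniquely through a homomorphism of $k$-group schemes $\iota \colon G' \to K$. I would then reinterpret the sheaf-exactness of $1 \to G' \to G \xrightarrow{f} E$ in terms of $\iota$: exactness at $G'$ says $G' \to G$ is a monomorphism of sheaves, so $\iota$ is a monomorphism; exactness at $G$ says that the image subsheaf of $G' \to G$ equals the kernel sheaf of $f$, and since $K$ represents this kernel sheaf on the site, this says exactly that $\iota$ is an epimorphism of sheaves. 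As a morphism of sheaves that is simultaneously a monomorphism and an epimorphism is an isomorphism, $\iota$ restricts to an isomorphism of sheaves on the geometrically reduced \'etale site.

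Finally I would descend this to schemes by Yoneda. Both $G'$ and $K$ are smooth, hence geometrically reduced, hence objects of the geometrically reduced \'etale site, and their functors of points restrict to sheaves on this site by \'etale descent for morphisms. Consequently $\Hom(G', K) = h_K(G') = \Hom_{\text{sheaves}}(h_{G'}, h_K)$, and likewise with the roles reversed, so the isomorphism of sheaves $\iota$ and its inverse arise from mutually inverse morphisms of schemes; that is, $\iota \colon G' \xrightarrow{\sim} K$ is an isomorphism of $k$-group schemes, which is exactly the exactness of $1 \to G' \to G \xrightarrow{f} E$ as a sequence of $k$-group schemes. The main obstacle is the conceptual point that \emph{both} ends of the relevant portion of the sequence are geometrically reduced — $G'$ because it is smooth, and $K$ because the \'etaleness of $E$ forces the kernel to be open in the smooth group $G$ — since this is precisely what makes the restricted Yoneda lemma available to turn the sheaf isomorphism into a scheme isomorphism; the remaining ingredients (openness of the kernel, and representable functors being sheaves for the \'etale topology) are routine.
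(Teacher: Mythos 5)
Your proof is correct and follows essentially the same strategy as the paper: identify $G'$ with the scheme-theoretic kernel $\ker(f)$, show that $\ker(f)$ is smooth using the \'etaleness of $E$, and then upgrade the isomorphism of sheaves $G' \xrightarrow{\sim} \ker(f)$ on the geometrically reduced \'etale site to an isomorphism of $k$-group schemes via Yoneda on the category of geometrically reduced $k$-schemes. The only (cosmetic) difference is in the smoothness step: the paper notes that $G^0 \subset \ker(f)$, so that $\ker(f)^0 = G^0$ is smooth and hence $\ker(f)$ is smooth, whereas you observe that the identity section of the \'etale group $E$ is an open immersion, so that $\ker(f)$ is open in the smooth group $G$ --- both are valid one-line arguments.
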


\begin{proof}
The exactness as a sequence of sheaves on the geometrically reduced \'etale site implies that the map $\psi \colon G' \rightarrow \ker(f)$ induces isomorphisms on $T$-points for all geometrically reduced $k$-schemes $T$. This would imply that $\psi$ is an isomorphism of $k$-group schemes (rather than just of sheaves on the geometrically reduced \'etale site) provided that $\ker(f)$ was smooth. So it suffices to prove this smoothness. In fact, the \'etaleness of $E$ implies that $G^0 \subset \ker(f)$, hence $\ker(f)^0 = G^0$ is smooth, therefore so is $\ker(f)$.
\end{proof}

\begin{lemma}
\label{codomainconstant}
Given an exact sequence
\[
1 \longrightarrow G \longrightarrow \mathscr{F} \xlongrightarrow{\phi} E
\]
of group sheaves on the geometrically reduced \'etale site of a field $k$, with $G$ a smooth $k$-group scheme with affine identity component and $E$ an \'etale $k$-group scheme, then $\im(\phi)$ is an \'etale $k$-group scheme, $\mathscr{F}$ is represented by a smooth $k$-group scheme, and the induced sequence
 \[
1 \longrightarrow G \longrightarrow \mathscr{F} \xlongrightarrow{h} \im(\phi) \longrightarrow 1
\]
is a short-exact sequence of $k$-group schemes.
\end{lemma}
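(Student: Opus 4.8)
The plan is to construct $\mathscr{F}$ as a $k$-scheme by exhibiting it, étale-locally on the base, as a disjoint union of torsors under $G$, and then descending this construction along the étale cover trivializing $E$. First I would observe that, since $E$ is étale over the separably closed or general field $k$, after base change to $k_s$ the group scheme $E_{k_s}$ is constant, say associated to an abstract (discrete) group $\Gamma$. The extension then becomes, over $k_s$, an extension of the constant group $\Gamma$ by the smooth $k_s$-group scheme $G_{k_s}$ in the category of sheaves on the geometrically reduced étale site. The key point is that over $k_s$ one can trivialize the $E$-factor and write $\mathscr{F}_{k_s}$ set-theoretically as a disjoint union $\coprod_{\gamma \in \Gamma} \mathscr{F}_\gamma$, where $\mathscr{F}_\gamma$ is the preimage of the section $\gamma \in E_{k_s}(k_s) = \Gamma$.

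The main work is to show that each fiber $\mathscr{F}_\gamma$ is represented by a $k_s$-scheme, namely a $G_{k_s}$-torsor. For this I would use that $\mathscr{F}_\gamma$, as a sheaf on the geometrically reduced étale site, is a torsor under $G_{k_s}$ (this is immediate from the exactness of the sequence, since left-multiplication by $G$ acts simply transitively on each fiber of the surjection $\mathscr{F} \to E$). The hard part will be representability of this torsor: a priori a torsor under a smooth group scheme need only be a sheaf, and one must produce an actual scheme. Here is where the hypothesis that $G^0$ is affine is essential. Since $G^0$ is smooth, connected, and affine, torsors under it over a field are themselves affine schemes (by descent along the étale-local triviality of such torsors, or because $\mathrm{H}^1$ of a field in the fppf topology with affine coefficients is computed by actual torsors which are affine); and since $G/G^0$ is a finite (étale) group scheme, extending a $G^0$-torsor to a $G$-torsor only introduces finitely many components, keeping the total space a scheme of finite type. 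I would therefore argue that each $\mathscr{F}_\gamma$ is represented by a smooth affine-over-a-finite-scheme, hence a smooth $k_s$-scheme, and that $\mathscr{F}_{k_s} = \coprod_\gamma \mathscr{F}_\gamma$ is a smooth $k_s$-scheme.

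Having represented $\mathscr{F}_{k_s}$ by a smooth $k_s$-group scheme (the group structure comes from the sheaf-theoretic group structure on $\mathscr{F}$, which is automatically a morphism of schemes once the underlying scheme is constructed), I would descend back to $k$. The Galois group $\Gal(k_s/k)$ acts on the constructed scheme $\mathscr{F}_{k_s}$ compatibly with its action on $G_{k_s}$ and $E_{k_s}$, because $\mathscr{F}$ itself is defined over $k$ as a sheaf; the descent datum is effective since $\mathscr{F}_{k_s}$ is quasi-projective over $k_s$ (being, in each fiber, affine over a finite scheme, with only the discrete group $\Gamma$ indexing components, and the Galois action permuting components through its action on $\Gamma = E(k_s)$, which has open stabilizers). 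Thus $\mathscr{F}$ is represented by a smooth $k$-group scheme. Finally, once $\mathscr{F}$ is a smooth $k$-group scheme, the given sequence of sheaves on the geometrically reduced étale site, having representable terms all of whose $T$-points agree for geometrically reduced $T$, is automatically a short exact sequence of $k$-group schemes: surjectivity of $\mathscr{F} \to E$ holds because it is an epimorphism of sheaves with smooth representable source and target, and exactness in the middle follows from the identification of $G$ with $\ker(\mathscr{F} \to E)$, which is smooth because $E$ is étale (so that $\mathscr{F}^0 = G^0 \subset \ker$, forcing $\ker$ to be smooth, exactly as in Lemma \ref{sesgeometalesite}). The anticipated main obstacle is precisely the representability of the $G$-torsors $\mathscr{F}_\gamma$, which is where the affineness of $G^0$ must be invoked to guarantee that the torsor is a genuine scheme rather than merely an fppf or étale sheaf.
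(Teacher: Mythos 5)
Your plan follows the same overall route as the paper's proof: pass to $k_s$, decompose $\mathscr{F}$ along the (now constant) group $E_{k_s}$ into fibers that are $G$-torsors, represent them, descend to $k$, and deduce exactness as a sequence of group schemes from Lemma \ref{sesgeometalesite}. However, the two justifications you single out as load-bearing are both off, and the second one fails in exactly the generality in which the lemma is applied in this paper.

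First, representability of the fibers $\mathscr{F}_\gamma$ over $k_s$ costs nothing and does not use affineness of $G^0$: on the geometrically reduced \'etale site, every \'etale cover of $\Spec(k_s)$ admits a section, so sheaf-surjectivity of $\mathscr{F} \to E$ already yields a $k_s$-point of each fiber, and $\mathscr{F}_\gamma$ is the \emph{trivial} torsor, isomorphic to $G_{k_s}$. (Also, $G/G^0$ need not be finite---in the applications the \'etale component groups are typically infinite, e.g.\ finitely generated groups such as $\Z^n$---though this only costs you the words ``finite type.'') Where affineness of $G^0$ is truly indispensable is the descent from $k_s$ back to $k$, and there your justification is incorrect: $\mathscr{F}_{k_s}$ is \emph{not} quasi-projective when $E(k_s)$ is infinite or $G$ has infinitely many components, being an infinite disjoint union of copies of $G^0_{k_s}$, hence not even quasi-compact. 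The paper's remedy is a d\'evissage your plan lacks: replace $G$ by $G^0$ and $E$ by the \'etale extension $\mathscr{F}/G^0$ of $E$ by $G/G^0$, so that $\mathscr{F}_{k_s} \to E_{k_s}$ becomes an affine \emph{morphism}, and then invoke effectivity of fpqc descent for relatively affine schemes over the descended base $E$. Your alternative---descent via Galois orbits of affine components---can be repaired into a proof, but you would then need the stabilizer of each \emph{component} of $\mathscr{F}_{k_s}$ to be open, not just the stabilizers of elements of $\Gamma = E(k_s)$, which is all your parenthetical addresses; this amounts to proving $\mathscr{F}(k_s) = \varinjlim \mathscr{F}(k')$ over finite separable subextensions $k'/k$, a continuity statement that does hold here (by a diagram chase using the extension structure together with $G(k_s) = \varinjlim G(k')$ and $E(k_s) = \varinjlim E(k')$) but is not automatic for a sheaf on this site and is absent from your plan. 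For comparison, the paper's own treatment of the $k_s$-step runs through the observation that any subsheaf of an \'etale group scheme on this site is constant, applied to the image of $\mathscr{F} \to E$; your split-covers shortcut is a legitimate substitute for that lemma, and is the one point where your outline is genuinely simpler.
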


\begin{proof}
We may extend scalars and thereby assume that $k = k_s$. We may also replace $G$ by $G^0$ and $E$ by the extension of $E$ by $G/G^0$ and thereby assume that $G$ itself is affine. Due to the effectivity of fpqc descent for relatively affine schemes and Lemma \ref{sesgeometalesite}, the only things that we have to prove are that the image of the map $\mathscr{F} \rightarrow E$ is a constant $k$-subgroup scheme of $E$ and that $h$ is surjective as a map of $k$-group schemes. In fact, any subsheaf $\mathscr{G}$ of $E$ on the geometrically reduced \'etale site of $k$ is the constant $k$-group scheme associated to a subgroup of $E(k)$. This follows from the fact that, for $1 \neq e \in E(k)$, if $e \in \mathscr{G}(T)$ for some geometrically reduced finite type $T/k$, then $T(k_s) \neq \emptyset$, so pulling back $e$ along a $k'$-point for some finite separable field extension $k'/k$ implies that $e \in \mathscr{G}(k')$, and then the sheaf property implies that $e \in \mathscr{G}(k)$. For the surjectivity of $h$ as a map of $k$-group schemes (that is, as a map of fppf sheaves), due to the smoothness of $\im(\phi)$ it suffices to prove surjectivity on $k$-points, which holds because $k = k_s$ and $h$ is surjective as a map of sheaves on the geometrically reduced \'etale site.
\end{proof}

We shall have occasion to use the following definition.

\begin{definition}
\label{almostunipotentdef}
We say that a finite type group scheme $G$ over a field $k$ is {\em almost unipotent} if the smooth connected $\overline{k}$-group $(G_{\overline{k}})_{\rm{red}}^0$ is unipotent.
\end{definition}

Almost unipotent groups enjoys the expected permanence properties.

\begin{proposition}
\label{almostunippermanence}
Let $k$ be a field. Then a finite type $k$-group scheme $G$ is almost unipotent if and only if it is affine and $G_{\overline{k}}$ contains no nontrivial $\overline{k}$-torus. Furthermore,
\begin{itemize}
\item[(i)] $($closure under subgroups$)$ a closed $k$-subgroup scheme of an almost unipotent $k$-group is almost unipotent;
\item[(ii)] $($closure under quotients$)$ given a $k$-homomorphism $G \rightarrow H$ of finite type $k$-groups that is surjective as a map of topological spaces, if $G$ is almost unipotent, then $H$ is also almost unipotent;
\item[(iii)] $($closure under extensions$)$ given an exact sequence
\[
1 \longrightarrow G' \longrightarrow G \longrightarrow G'' \longrightarrow 1
\]
of finite type $k$-groups with $G'$ and $G''$ almost unipotent, then $G$ is also almost unipotent.
\end{itemize}
\end{proposition}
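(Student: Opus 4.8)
The plan is to first establish the stated characterization and then deduce the three permanence properties from it, since the description ``affine with no nontrivial $\overline{k}$-torus'' is manifestly better behaved under the relevant operations than the definition in terms of $(G_{\overline{k}})_{\red}^0$. The easy direction of the characterization is the converse implication: if $G$ is affine and $G_{\overline{k}}$ contains no nontrivial torus, then $(G_{\overline{k}})_{\red}^0$ is a smooth connected affine $\overline{k}$-group (being a closed subgroup of the affine group $G_{\overline{k}}$) containing no nontrivial torus, and a smooth connected affine group over an algebraically closed field is unipotent precisely when it contains no nontrivial torus; hence $G$ is almost unipotent.

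The substantive direction, and the step I expect to be the principal obstacle, is showing that almost unipotence forces affineness. Here I would argue as follows. By hypothesis $(G_{\overline{k}})_{\red}^0$ is unipotent, hence affine. Since $G_{\overline{k}}$ is of finite type, $(G_{\overline{k}})_{\red}$ is a finite disjoint union of translates of its identity component, each isomorphic as a scheme to $(G_{\overline{k}})_{\red}^0$ and therefore affine; a finite disjoint union of affine schemes is affine, so $(G_{\overline{k}})_{\red}$ is affine. Because affineness is insensitive to nilpotents (a scheme of our sort is affine as soon as its reduction is), $G_{\overline{k}}$ is affine, and faithfully flat descent along $\Spec(\overline{k}) \to \Spec(k)$ then yields that $G$ itself is affine. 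Torus-freeness is immediate afterwards: any nontrivial torus $T \subset G_{\overline{k}}$ is smooth and connected, hence contained in $(G_{\overline{k}})_{\red}^0$, contradicting the fact that a unipotent group contains no nontrivial torus. This completes the characterization.

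Granting it, properties (i) and (iii) follow formally. For (i), a closed $k$-subgroup scheme of an affine group is affine, and a subgroup of a torus-free group is torus-free. For (iii), the extension $G$ of the affine group $G''$ by the affine group $G'$ is affine, since $G \to G''$ is a $G'$-torsor and hence an affine morphism over the affine base $G''$; and if $T \subset G_{\overline{k}}$ were a nontrivial torus, its image in $G''_{\overline{k}}$ would be a torus, hence trivial as $G''$ is torus-free, so $T \subset G'_{\overline{k}}$, forcing $T = 1$ because $G'$ is torus-free.

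Property (ii) I would instead handle directly at the level of reduced identity components, since the hypothesis there is only topological surjectivity and not a genuine quotient map. Surjectivity on underlying spaces is stable under base change, so $G_{\overline{k}} \to H_{\overline{k}}$ is topologically surjective; restricting to reduced subschemes (legitimate since $\overline{k}$ is perfect, so these are smooth closed subgroup schemes) gives a surjective homomorphism of smooth $\overline{k}$-groups $(G_{\overline{k}})_{\red} \to (H_{\overline{k}})_{\red}$. Such a homomorphism carries the identity component onto the identity component (the image of $(G_{\overline{k}})_{\red}^0$ is connected of finite index in $(H_{\overline{k}})_{\red}$, hence equals its identity component), so $(H_{\overline{k}})_{\red}^0$ is the image of the unipotent group $(G_{\overline{k}})_{\red}^0$ and is therefore itself unipotent. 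Hence $H$ is almost unipotent, as desired.
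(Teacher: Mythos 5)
Your proposal is correct and takes essentially the same route as the paper: both establish the characterization by reducing to the smooth connected case over $\overline{k}$ (affineness being insensitive to nilpotents, components, and base field extension) and invoking Borel's criterion that a smooth connected affine group with no nontrivial torus is unipotent; both then deduce (i) and (iii) formally from the characterization, and both prove (ii) by passing to $(G_{\overline{k}})_{\red} \twoheadrightarrow (H_{\overline{k}})_{\red}$, noting surjectivity on identity components, and using that a quotient of a unipotent group is unipotent.
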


\begin{proof}
First we prove the first assertion. The group $G$ is affine if and only if $G_{\overline{k}}$ is (e.g., by the cohomological criterion for affineness), so we may assume that $k = \overline{k}$. Furthermore, $G$ is affine if and only if $G_{\rm{red}}^0$ is. Finally, the $k = \overline{k}$-tori of $G$ are all contained in $G_{\rm{red}}^0$. To prove the first assertion, we are therefore free to assume that $G$ is smooth and connected. A unipotent group is affine and contains no nontrivial torus. Conversely, if the smooth connected $G$ is affine and contains no nontrivial tori, then it is unipotent \cite[Ch.\,IV, Cor.\,11.5(2)]{borelalggroups}. This proves the first assertion of the proposition. It remains to prove (i)-(iii). The assertions (i) and (iii) follow immediately from the first assertion, together (in the case of (iii)) with the fact that an extension of affine group schemes is affine. For (ii), the map $G \rightarrow H$ induces a surjective map on $\overline{k}$-points, hence a surjection of smooth finite type $\overline{k}$-group schemes $(G_{\overline{k}})_{\rm{red}} \twoheadrightarrow (H_{\overline{k}})_{\rm{red}}$. It follows that we also obtain a surjection on identity components. The unipotence of $(G_{\overline{k}})_{\rm{red}}^0$ therefore implies that of $(H_{\overline{k}})_{\rm{red}}^0$.
\end{proof}

\begin{proposition}
\label{extsemiabalmunip}
If $G$ is a smooth connected solvable group scheme over a field $k$, then $G$ may be written as an extension
\[
1 \longrightarrow G' \longrightarrow G \longrightarrow S \longrightarrow 1
\]
of finite type $k$-group schemes with $G'$ almost unipotent and $S$ a semiabelian variety.
\end{proposition}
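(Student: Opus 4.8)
The plan is to let $S$ be the maximal semiabelian quotient of $G$ over $k$ and to set $G' := \ker(G \twoheadrightarrow S)$; the real content is the verification that $G'$ is almost unipotent. To construct $S$ I would argue as follows. The semiabelian quotients $q_i\colon G \twoheadrightarrow S_i$ form a nonempty family, and since $G$ is Noetherian the intersection of their (closed, normal) kernels is already a finite subintersection $G' := \bigcap_{i=1}^{r}\ker(q_i)$. The induced map $G \to \prod_{i=1}^{r} S_i$ has kernel $G'$, so $G/G'$ embeds into the semiabelian variety $\prod_i S_i$; being smooth and connected (a quotient of $G$) and a subgroup of a semiabelian variety — hence commutative and containing no copy of $\Ga$ — it is itself semiabelian. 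Thus $S := G/G'$ is semiabelian, and every $q_i$ factors through it, so it is the maximal semiabelian quotient.

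By Definition \ref{almostunipotentdef} it remains to check that $(G'_{\overline{k}})^0_{\mathrm{red}}$ is unipotent. Over the perfect field $\overline{k}$, solvability of $G$ together with Chevalley's theorem shows that the maximal semiabelian quotient of $G_{\overline{k}}$ is $G_{\overline{k}}/\mathscr{R}_u$, where $\mathscr{R}_u := \mathscr{R}_u(G_{\overline{k}})$ is the unipotent radical: the quotient $G_{\overline{k}}/\mathscr{R}_u$ is an extension of an abelian variety by a torus (with trivial conjugation action, hence semiabelian), and any semiabelian quotient, admitting no nonzero homomorphism from the unipotent group $\mathscr{R}_u$, factors through it. Since $S_{\overline{k}}$ is such a quotient, $\mathscr{R}_u \subseteq G'_{\overline{k}}$, and $K := G'_{\overline{k}}/\mathscr{R}_u$ is a subgroup scheme of the semiabelian variety $G_{\overline{k}}/\mathscr{R}_u$. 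As $\mathscr{R}_u$ is smooth connected unipotent, $G'$ is almost unipotent if and only if $K$ is finite, i.e. if and only if $\dim S = \dim(G_{\overline{k}}/\mathscr{R}_u) =: D$.

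The main obstacle is precisely this equality of dimensions — that the dimension of the maximal semiabelian quotient is insensitive to the a priori inseparable extension $k \to \overline{k}$. As $\dim S \leq D$ is automatic, I must produce a semiabelian quotient of $G$ over $k$ of dimension $D$, and here I would invoke the Frobenius-twist descent of the introduction. The radical $\mathscr{R}_u$ is defined over $k_{\perf} = \varinjlim_m k^{1/p^m}$, hence descends to a normal smooth connected unipotent subgroup of $G_{k^{1/p^m}}$ for some finite $m$, with semiabelian quotient $\Sigma$ of dimension $D$. Transporting $\Sigma$ and its quotient map along the field isomorphism $k^{1/p^m} \xrightarrow{\sim} k$, $x \mapsto x^{p^m}$ — i.e. through the Cartesian square of the introduction — gives a semiabelian quotient $G^{(p^m)} \twoheadrightarrow \Sigma''$ over $k$ with $\dim\Sigma'' = D$, and precomposing with the faithfully flat relative Frobenius $G \to G^{(p^m)}$ exhibits $\Sigma''$ as a semiabelian quotient of $G$ over $k$ of dimension $D$. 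Hence $\dim S \geq D$, so $\dim S = D$, $K$ is finite, and $G'$ is almost unipotent, yielding the desired extension $1 \to G' \to G \to S \to 1$.

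Alternatively one could reduce at the outset to commutative $G$: since $S$ is commutative, $G \to S$ factors through $G/\mathscr{D}G$, and $\mathscr{D}G$ — smooth, connected, and (being unipotent over $\overline{k}$) unipotent over $k$ — is almost unipotent, so the general case follows from the commutative one by Proposition \ref{almostunippermanence}(iii). In either formulation I expect the dimension-descent step to be the crux, everything else being formal.
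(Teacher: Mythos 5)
Your proof is correct, and you have correctly identified the crux: producing a semiabelian quotient of $G$ \emph{over $k$} of full dimension, via descent of the unipotent radical from $k_{\perf}$ to a finite level $k^{1/p^m}$, transport to $G^{(p^m)}$ through the isomorphism $k^{1/p^m} \simeq k$, and precomposition with the finite faithfully flat relative Frobenius $G \rightarrow G^{(p^m)}$. That engine is exactly the paper's, but your architecture around it is genuinely different. The paper never constructs a canonical quotient: it argues by d\'evissage, using the finiteness of the Frobenius kernel together with closure of almost unipotence under extensions (Proposition \ref{almostunippermanence}(iii)) to say ``it suffices to prove the proposition for $G^{(p^n)}$,'' thereby reducing first to the case where $G$ is an extension of an abelian variety by a linear algebraic group, then to the affine case, and then running the Frobenius trick a second time on the torus-by-unipotent structure of the affine part. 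You instead build the maximal semiabelian quotient $S$ over $k$ once and for all (the Noetherian intersection argument is fine), reduce almost-unipotence of its kernel to the single numerical statement $\dim S = \dim(G_{\overline{k}}/\mathscr{R}_u)$, and apply the Frobenius trick only once, to the full unipotent radical $\mathscr{R}_u(G_{\overline{k}})$ rather than in two stages. Your route buys a canonical $S$ with a universal property (every semiabelian quotient of $G$ factors through it), which is strictly more than the proposition asserts; the paper's route is shorter and needs neither the maximal-quotient formalism nor any structure theory of subgroups of semiabelian varieties, since the quotient is exhibited explicitly at each stage.

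Two presentational points. First, one justification you give is wrong as stated, although the fact it supports is true and standard: a smooth connected subgroup $H$ of a semiabelian variety is not semiabelian ``because it is commutative and contains no copy of $\Ga$'' --- that implication fails over imperfect fields, where any commutative wound unipotent group (e.g.\ $W_a$ of Example \ref{exofwoundgp}) is commutative and contains no copy of $\Ga$ yet is not semiabelian. The correct argument: if $T$ is the toric part of the ambient semiabelian variety, then $(H \cap T)^0_{\rm{red}}$ is a subtorus, and $H/(H \cap T)^0_{\rm{red}}$ is smooth, connected, and finite over its image in the abelian quotient, hence proper, hence an abelian variety; so $H$ is an extension of an abelian variety by a torus. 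Second, your spreading-out step --- that normality, smoothness, connectedness, unipotence of $\mathscr{R}_m$, and semiabelian-ness of the quotient all persist at some finite level $k^{1/p^m}$ --- should be pinned to the standard limit formalism (a torus, for instance, is detected over $\overline{k}$, and properness and flatness descend along the faithfully flat extension $k_{\perf}/k^{1/p^m}$). Neither point is a mathematical gap; the proof goes through.
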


In fact, Proposition \ref{extsemiabalmunip} holds without the assumption that $G$ is smooth and connected, but we do not require this greater generality.

\begin{proof}
By Chevalley's Theorem, $G_{k_{\perf}}$ is an extension of an abelian variety by a linear algebraic group, hence the same holds over $G_{k^{1/p^n}}$ for some $n > 0$. Via the field extension $k \rightarrow k$, $x \mapsto x^{p^n}$, the $k$-isomorphism $k \rightarrow k^{1/p^n}$, $x \mapsto x^{1/p^n}$, identifies the $k$-group $G^{(p^n)}$ with $G_{k^{1/p^n}}$, so it follows that $G^{(p^n)}$ is an extension {\em over $k$} of an abelian variety by a linear algebraic $k$-group. The $n$-fold Frobenius morphism $G \rightarrow G^{(p^n)}$ is a surjective $k$-homomorphism with finite kernel. By Proposition \ref{almostunippermanence}(iii), therefore, it suffices to prove the proposition for $G^{(p^n)}$, so we may assume that $G$ is an extension of an abelian variety by a linear algebraic group, and we may then assume that $G$ is linear algebraic, provided we prove that $G'$ may be chosen to be a characteristic $k$-subgroup of $G$ (to ensure normality in the larger group which is an extension of an abelian variety by $G$). Over $k_{\perf}$, $G$ is an extension of a torus by a smooth connected unipotent group, so once again this holds for some $G^{(p^m)}$. Therefore, because $\ker(F_{G/k}^{(p^m)})$ and the unipotent radical of $G^{(p^m)}$ are characteristic subgroups, we are done.
\end{proof}

We now prove the existence of the moduli spaces $\calMor((X, x), (G, 1))^+$ of {\em pointed morphisms}.

\begin{theorem}
\label{modulipointedmorphisms}
For a field $k$, a connected and geometrically reduced finite type $k$-scheme $X$ equipped with a $k$-point $x \in X(k)$, and a solvable, finite type $k$-group scheme $G$ not containing a $k$-subgroup scheme $k$-isomorphic to $\Ga$, there is a unique subfunctor $\calMor((X, x), (G, 1))^+ \subset \calMor((X, x), (G, 1))$ with the following two properties:
\begin{itemize}
\item[(i)] The inclusion $\calMor((X, x), (G, 1))^+ \subset \calMor((X, x), (G, 1))$ induces an equality on $T$-points for every geometrically reduced $k$-scheme $T$.
\item[(ii)] The functor $\calMor((X, x), (G, 1))^+$ is represented by a smooth $k$-group scheme.
\end{itemize}
Furthermore, the scheme in (ii), which we also denote $\calMor((X, x), (G, 1))^+$, has wound unipotent identity component, and its \'etale component group has finitely-generated group of $k_s$-points.
\end{theorem}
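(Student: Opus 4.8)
The uniqueness is formal: properties (i) and (ii) pin down $\calMor((X,x),(G,1))^+$ on geometrically reduced $k$-schemes, and a smooth $k$-scheme is determined by its functor of points on geometrically reduced $k$-schemes, so I would only address existence. First I would reduce to the case that $G$ is smooth and connected. Since $G^{\sm}(k_s)=G(k_s)$ \cite[Lem.\,C.4.1]{cgp} and $k_s$-points are Zariski dense in a geometrically reduced scheme, any $k$-morphism from a geometrically reduced scheme into $G$ factors through the maximal smooth $k$-subgroup scheme $G^{\sm}$; as $G^{\sm}$ is again solvable and contains no copy of $\Ga$, I may replace $G$ by $G^{\sm}$. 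For connectedness, $X$ is geometrically connected (being connected with a rational point) and $\pi_0(G)$ is \'etale, so for every $k$-scheme $T$ the composite $X_T\to G_T\to \pi_0(G)_T$ has geometrically connected fibers landing in the identity section, which is open and closed; hence every pointed morphism factors through $G^0$, and I may assume $G$ is smooth, connected, solvable, and $\Ga$-free.

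Next I would run a d\'evissage using Proposition \ref{extsemiabalmunip}, writing $G$ as an extension $1\to G'\to G\xrightarrow{\pi} S\to 1$ with $G'$ almost unipotent and $S$ semiabelian. Composition with $\pi$ gives a left-exact sequence of functors on \emph{all} $k$-schemes
\[
1\longrightarrow \calMor((X,x),(G',1))\longrightarrow \calMor((X,x),(G,1))\xrightarrow{\pi_*}\calMor((X,x),(S,0)),
\]
since $G'=\ker\pi$. A pointed morphism from a geometrically reduced scheme into $G'$ factors (as above) through the maximal smooth $k$-subgroup scheme of $G'$, and then, $X$ being connected and pointed, through its identity component $U:=(G'^{\,\sm})^0$; by Definition \ref{almostunipotentdef} the group $U$ is smooth connected unipotent, and as a $k$-subgroup of the $\Ga$-free group $G$ it is wound. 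Thus on geometrically reduced schemes $\calMor((X,x),(G',1))$ agrees with $\calMor((X,x),(U,1))$, and I set $K^+:=\ker\!\big(\ev_x\colon \calMor(X,U)^+\to U\big)$, where $\calMor(X,U)^+$ is the finite type unipotent $k$-group scheme with wound identity component of Theorem \ref{Mor(X,U)^+}. The map $\ev_x$ admits the scheme-theoretic section given by constant morphisms $U\hookrightarrow \calMor(X,U)^+$, so $\calMor(X,U)^+\cong K^+\times U$ as $k$-schemes and $K^+$ is smooth, with wound unipotent identity component and finite component group. The semiabelian end is controlled by Theorem \ref{pointedmorsemiabelianvar}: $\calMor((X,x),(S,0))^+$ is an \'etale $k$-group scheme with finitely generated group of $k_s$-points.

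I would then amalgamate the two pieces on the geometrically reduced \'etale site of $k$ (Definition \ref{geometricallyreducedetalesite}). Let $\mathscr E\subseteq \calMor((X,x),(S,0))^+$ be the image subsheaf of $\pi_*$; as a subsheaf of an \'etale group scheme it is \'etale with finitely generated group of $k_s$-points. The restriction $\mathscr F$ of $\calMor((X,x),(G,1))$ to geometrically reduced schemes then fits into a short exact sequence
\[
1\longrightarrow K^+\longrightarrow \mathscr F\longrightarrow \mathscr E\longrightarrow 1
\]
of sheaves on this site: left exactness is the kernel identification above, and surjectivity onto $\mathscr E$ holds \'etale-locally because sections of the \'etale sheaf $\mathscr E$ are separably locally constant and hence lift to honest pointed morphisms into $G$. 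Since $K^+$ is a smooth $k$-group scheme with affine (indeed unipotent) identity component and $\mathscr E$ is \'etale, Lemma \ref{codomainconstant} shows $\mathscr F$ is represented by a smooth $k$-group scheme $M$ and that the sequence is exact as $k$-group schemes. Reading off invariants, $M^0=(K^+)^0$ is wound unipotent, while $\pi_0(M)$ is an extension of $\mathscr E$ by the finite group $\pi_0(K^+)$ and hence has finitely generated group of $k_s$-points.

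It remains --- and this is the delicate point flagged in the introductory remarks --- to realize $M$ as a subfunctor of $\calMor((X,x),(G,1))$ on \emph{all} $k$-schemes, not merely on geometrically reduced ones. The universal pointed family over $M$ (which is geometrically reduced, being smooth) defines by Yoneda a homomorphism $\iota\colon M\to \calMor((X,x),(G,1))$, and I would show it is a monomorphism as follows. Evaluating at $\mathrm{id}_M\in M(M)$ and using that $M$ is geometrically reduced shows that $\pi_*\circ\iota$ equals the composite $M\twoheadrightarrow \mathscr E\hookrightarrow \calMor((X,x),(S,0))$; hence $\ker\iota\subseteq \ker(\pi_*\iota)=K^+$. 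But $\iota|_{K^+}$ is exactly the inclusion of the genuine subfunctor $K^+=\calMor((X,x),(U,1))^+\subseteq \calMor(X,U)^+\subseteq \calMor(X,U)\hookrightarrow \calMor(X,G)$ of functors on all $k$-schemes, which is a monomorphism; therefore $\ker\iota=1$, so $M$ is a subfunctor of $\calMor((X,x),(G,1))$ on all $k$-schemes, establishing (i) and (ii). The main obstacle throughout is precisely this final fusion: one must combine the smooth unipotent kernel $K^+$ with the \'etale semiabelian cokernel $\mathscr E$ into a single smooth group scheme whose kernel direction is the \emph{smooth} $K^+$ (rather than the possibly non-smooth full kernel $\calMor((X,x),(G',1))$), and which remains a subfunctor of $\calMor$ over arbitrary, possibly non-reduced, bases --- exactly what Lemma \ref{codomainconstant} and the ambient left-exact sequence on all schemes are built to accomplish.
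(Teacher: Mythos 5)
Your proposal is correct and follows essentially the same route as the paper's proof: reduction to $G$ smooth and connected, the d\'evissage $1 \to G' \to G \to S \to 1$ of Proposition \ref{extsemiabalmunip}, the wound unipotent and semiabelian pointed cases via Theorems \ref{Mor(X,U)^+} and \ref{pointedmorsemiabelianvar}, amalgamation through Lemma \ref{codomainconstant} on the geometrically reduced \'etale site, and a Yoneda argument to upgrade the representing scheme to a subfunctor of $\calMor((X,x),(G,1))$ on all $k$-schemes. The only (harmless) deviations are that you observe the kernel $K^+$ is automatically smooth via the constant-section splitting $\calMor(X,U)^+ \cong K^+ \rtimes U$, where the paper simply passes to the maximal smooth $k$-subgroup scheme of the kernel, and that your appeals to Zariski density of $k_s$-points implicitly require the paper's reduction to finite type $T$ via filtered limits.
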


\begin{proof}
This result is true when $G$ is a semiabelian variety, by Theorem \ref{pointedmorsemiabelianvar}. Next we prove it when $G = U$ is wound unipotent. In this case, the analogue of the theorem for unpointed morphisms holds by Theorem \ref{Mor(X,U)^+}. Consider the $k$-group homomorphism $\calMor(X, U)^+ \rightarrow U$ which sends a morphism $f$ to $f(x)$. The kernel $W$ of this homomorphism is a finite type unipotent $k$-group scheme admitting no copy of $\Ga$ and such that the functor $h_W$ defined by $W$ agrees with $\calMor((X, x), (U, 1))$ on geometrically reduced $k$-schemes. The maximal smooth $k$-subgroup scheme of $W$ is then a subfunctor of $\calMor((X, x), (U, 1))$ which also agrees with it on geometrically reduced $k$-schemes.

We now turn to the general case. First note that $X$ is geometrically connected, because it is connected and contains a $k$-point. In conjunction with Galois descent, this allows us to replace $k$ by $k_s$ and assume that $k$ is separably closed. Consider the restriction $\calMor((X, x), (G, 1))_{{\rm{gr}}}$ of the functor $\calMor((X, x), (G, 1))$ to the category of geometrically reduced $k$-schemes. We will first prove that $\calMor((X, x), (G, 1))_{\rm{gr}}$ is representable by a smooth group scheme with wound identity component and finitely-generated component group. In order to show this, it suffices to show that such representability holds on the category of affine geometrically reduced $k$-schemes, and by writing arbitrary geometrically reduced $k$-algebras as filtered direct limits of finite type ones, on the category of geometrically reduced finite type $k$-schemes. We therefore prove the desired representability for the restriction of $\calMor((X, x), (G, 1))_{{\rm{gr}}}$ to the category of geometrically reduced $k$-schemes of finite type.

We claim that 
\begin{equation}
\label{modulipointedmorphismspfeqn1}
\calMor((X, x), (G, 1))_{\rm{gr}} = \calMor((X, x), (G^{\rm{sm}}, 1))_{\rm{gr}},
\end{equation}
where $G^{\rm{sm}} \subset G$ is the maximal smooth $k$-subgroup scheme. Indeed, this follows from the fact that a map from a finite type geometrically reduced $k$-scheme into $G$ factors through $G^{\rm{sm}}$ \cite[Lem.\,C.4.1, Rem.\,C.4.2]{cgp}. We may therefore replace $G$ by $G^{\rm{sm}}$. We also claim that
\[
\calMor((X, x), (G^{\rm{sm}}, 1))^+ = \calMor((X, x), ((G^{\rm{sm}})^0, 1))^+.
\]
Indeed, it suffices to show that the two functors agree on $k$-points, since any geometrically reduced $k$-scheme of finite type has Zariski dense set of $k$-points (because $k = k_s$), and for $k$-points, the assertion follows from the fact that $X$ is connected. In proving the theorem for a given $G$, we are therefore free to replace $G$ by $(G^{\rm{sm}})^0$. We may therefore assume that $G$ is smooth and connected.

By Proposition \ref{extsemiabalmunip}, we have an exact sequence
\begin{equation}
\label{modulipointedmorphismspfeqn7}
1 \longrightarrow G' \longrightarrow G \longrightarrow S \longrightarrow 1
\end{equation}
with $G'$ almost unipotent and $S$ a semiabelian variety. This induces an exact sequence of sheaves on the geometrically reduced \'etale site of $k$
\[
1 \longrightarrow \calMor((X, x), (G', 1))_{\rm{gr}} \longrightarrow \calMor((X, x), (G, 1))_{\rm{gr}} \longrightarrow \calMor((X, x), (S, 0))_{\rm{gr}}.
\]
By Theorem \ref{pointedmorsemiabelianvar}, $\calMor((X, x), (S, 0))_{\rm{gr}}$ is a constant $k$-group scheme with finitely-generated group of $k$-points. Lemma \ref{codomainconstant} therefore implies that, in order to prove the representability of $\calMor((X, x), (G, 1))_{\rm{gr}}$ by a smooth group of the desired sort (wound identity component and finitely-generated component group), it suffices to prove that of $\calMor((X, x), (G', 1))_{\rm{gr}}$. As we have seen above, it suffices in turn to replace $G$ by $((G')^{\rm{sm}})^0$, so we may assume (for the purpose of proving the representability of $\calMor((X, x), (G, 1))_{\rm{gr}}$ by a scheme of the desired shape) that $G = U$ is a wound unipotent $k$-group, which case has already been treated.

We therefore have a smooth $k$-group scheme $\calMor((X, x), (G, 1))^+$ which represents the functor $\calMor((X, x), (G, 1))_{\rm{gr}}$. The universal morphism $$(X \times \calMor((X, x), (G, 1))^+, x) \rightarrow (G \times \calMor((X, x), (G, 1))^+, 1)$$ then defines a morphism of functors $\calMor((X, x), (G, 1))^+ \rightarrow \calMor((X, x), (G, 1))$, and we wish to show that this morphism is an inclusion. As we saw previously, $\calMor((X, x), (G, 1))^+$ is unchanged upon replacing $G$ by $(G^{\rm{sm}})^0$, so, thanks to the inclusion $$\calMor((X, x), ((G^{\rm{sm}})^0, 1)) \hookrightarrow \calMor((X, x), (G, 1)),$$ we see that the assertion holds if $(G^{\rm{sm}})^0$ is wound unipotent and that, in order to prove it for general $G$, we are free to replace $G$ by $(G^{\rm{sm}})^0$ and assume that $G$ is smooth and connected. 

By Proposition \ref{extsemiabalmunip}, we have an exact sequence as in (\ref{modulipointedmorphismspfeqn7}) with $G'$ almost unipotent and $S$ a semiabelian variety. Applying Theorem \ref{pointedmorsemiabelianvar} and Lemma \ref{sesgeometalesite}, we obtain the following exact commutative diagram of group functors on the category of all $k$-schemes:
\[
\begin{tikzcd}
1 \arrow{r} & \calMor((X, x), (G', 1))^+ \arrow{r} \arrow[d, hookrightarrow] & \calMor((X, x), (G, 1))^+ \arrow{r} \arrow{d} & \calMor((X, x), (S, 0))^+ \arrow[d, hookrightarrow] \\
1 \arrow{r} & \calMor((X, x), (G', 1)) \arrow{r} & \calMor((X, x), (G, 1)) \arrow{r} & \calMor((X, x), (S, 0))
\end{tikzcd}
\]
Because the left and right vertical arrows are inclusions, it follows that the same holds for the middle arrow. This completes the proof of the theorem.
\end{proof}

We are now prepared to prove Theorem \ref{maintheorem}, whose statement we recall.

\begin{theorem}[Theorem $\ref{maintheorem}$]
\label{maintheoremtake2}
For a field $k$, a geometrically reduced $k$-scheme $X$ of finite type, and a solvable $k$-group scheme $G$ of finite type not containing a $k$-subgroup scheme $k$-isomorphic to $\Ga$, let $$\calMor(X, G)\colon \{\mbox{$k$-schemes}\} \rightarrow \{\mbox{groups}\}$$ denote the functor defined by the formula $T \mapsto \Mor_T(X_T, G_T)$. Then there is a unique subfunctor $\calMor(X, G)^+ \subset \calMor(X, G)$ with the following two properties:
\begin{itemize}
\item[(i)] The inclusion $\calMor(X, G)^+(T) \subset \calMor(X, G)(T)$ is an equality for all geometrically reduced $k$-schemes $T$.
\item[(ii)] The functor $\calMor(X, G)^+$ is represented by a smooth $k$-group scheme.
\end{itemize}
Furthermore, letting $\calMor(X, G)^+$ also denote the scheme in (ii), then the \'etale component group of $\calMor(X, G)^+$ has finitely-generated group of $k_s$-points. If $G = U$ is wound unipotent, then $\calMor(X, U)^+$ is a smooth, finite type, unipotent $k$-group scheme with wound identity component.
\end{theorem}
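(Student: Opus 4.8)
The final sentence, concerning $G = U$ wound unipotent, is exactly Theorem~\ref{Mor(X,U)^+}, which already treats arbitrary geometrically reduced finite type $X$; by the uniqueness forced by (i) together with Yoneda's Lemma it coincides with the group scheme produced by the general construction below, and so inherits the asserted unipotence and woundness of its identity component. Thus the work lies in the general solvable case, and since all the genuine geometric input (boundedness, and the pointed semiabelian and pointed solvable cases) is already in hand, the remaining argument is the essentially formal passage from \emph{pointed} to \emph{unpointed} morphisms.

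The plan is as follows. By Galois descent, exactly as at the start of the proof of Theorem~\ref{modulipointedmorphisms}, I would first reduce to $k = k_s$; since the subfunctor sought is pinned down by (i), any construction over $k_s$ is automatically $\Gal(k_s/k)$-equivariant and descends, and it remains a subfunctor of the functor $\calMor(X, G)$, which is defined over $k$. Over $k_s$ I would then split off connected components: writing $X = \bigsqcup_i X_i$ gives $\calMor(X, G) = \prod_i \calMor(X_i, G)$ on all $k$-schemes, and a finite product of smooth group schemes with finitely generated \'etale component groups is again of this form, so it suffices to treat $X$ connected. Discarding the trivial case $X = \emptyset$, the density of $X(k_s) = X(k)$ in $X$ lets me choose a point $x \in X(k)$.

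The heart of the matter is the evaluation homomorphism $\ev_x\colon \calMor(X, G) \to G$, $f \mapsto f(x)$, a homomorphism of functors on all $k$-schemes that is split by the inclusion $c\colon G \to \calMor(X, G)$ sending $g$ to the constant morphism at $g$, and whose kernel is $\calMor((X, x), (G, 1))$. Restricting to the geometrically reduced \'etale site and using that every morphism from a geometrically reduced scheme into $G$ factors through the maximal smooth subgroup $G^{\sm}$ \cite[Lem.\,C.4.1, Rem.\,C.4.2]{cgp}, the image of $\ev_x$ is exactly $G^{\sm}$ and $c$ restricts to a section, whence
\[
\calMor(X, G)_{{\rm{gr}}} \;=\; \calMor((X, x), (G, 1))_{{\rm{gr}}} \rtimes G^{\sm}.
\]
By Theorem~\ref{modulipointedmorphisms} the first factor is represented by a smooth group with wound unipotent identity component and finitely generated \'etale component group, and $G^{\sm}$ is smooth of finite type. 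I would check that the conjugation action of $G^{\sm}$ on $\calMor((X, x), (G, 1))^+$ is algebraic: it is the classifying morphism of $(y, g, n) \mapsto g\, F(y, n)\, g^{-1}$ for the universal pointed morphism $F$, and this lands in $\calMor((X, x), (G, 1))^+$ because its source is smooth, hence geometrically reduced. The semidirect product is therefore represented by a smooth $k$-group scheme, which I define to be $\calMor(X, G)^+$.

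It then remains to verify the stated properties. Property (i) is immediate, since both factors satisfy it and $G^{\sm}(T) = G(T)$ for geometrically reduced $T$, while (ii) is the representability just obtained. That $\calMor(X, G)^+$ is a subfunctor of $\calMor(X, G)$ on \emph{all} $k$-schemes — the robustness emphasized after Theorem~\ref{maintheorem} — follows because the map $(n, g) \mapsto n\cdot c(g)$ into $\calMor(X, G)$ is injective on every $T$: one recovers $g = h(x)$ and $n = h\cdot c(h(x))^{-1}$ from the image $h$, using only that $\calMor((X, x), (G, 1))^+ \hookrightarrow \ker(\ev_x) \subset \calMor(X, G)$ and $G^{\sm} \hookrightarrow G \xrightarrow{c} \calMor(X, G)$ are inclusions on all schemes. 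Finally, $\pi_0(\calMor(X, G)^+)$ is built from $\pi_0(\calMor((X, x), (G, 1))^+)$, whose $k_s$-points are finitely generated, and the finite group $\pi_0(G^{\sm})$, so its group of $k_s$-points is finitely generated. The main (and essentially the only) obstacle is organizational: arranging the split evaluation sequence so that its quotient is $G^{\sm}$ rather than $G$ — which is precisely what permits direct representability of the semidirect product, bypassing the \'etale-quotient hypothesis of Lemma~\ref{codomainconstant} — and confirming that the conjugation action is a morphism of schemes and that the resulting object is a subfunctor on all $k$-schemes.
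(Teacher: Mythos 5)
Your proposal is correct and follows essentially the same route as the paper: Galois descent to $k = k_s$, reduction to connected nonempty $X$ with a chosen point $x$, and then realization of $\calMor(X, G)^+$ via the split evaluation sequence as a semidirect product of the pointed moduli space $\calMor((X, x), (G, 1))^+$ of Theorem \ref{modulipointedmorphisms} with the group itself, with the wound unipotent case quoted from Theorem \ref{Mor(X,U)^+}. The only difference is cosmetic: you form $\calMor((X, x), (G, 1))^+ \rtimes G^{\sm}$ directly (verifying algebraicity of the conjugation action via the universal family), whereas the paper takes $(G \ltimes \calMor((X, x), (G, 1))^+)^{\sm}$; these agree, since the maximal smooth subgroup scheme of the latter is precisely the former, and your version makes explicit a verification the paper leaves implicit.
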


\begin{proof}
The case when $G = U$ is wound unipotent is Theorem \ref{Mor(X,U)^+}. For the general case, we may by Galois descent assume that $k = k_s$. Let $X_1, \dots, X_n$ be the connected components of $X$. Then $\calMor(X, G) = \prod_{i=1}^n \calMor(X_i, G)$, so we may assume that $X$ is (geometrically, because $k = k_s$) connected. The theorem is trivial if $X = \emptyset$, so we may assume that $X \neq \emptyset$. Because $k = k_s$ and $X$ is geometrically reduced, $X(k) \neq \emptyset$. Choose some $x \in X(k)$. Consider the moduli space $\calMor((X, x), (G, 1))^+$ constructed in Theorem \ref{modulipointedmorphisms}, and let $\calMor(X, G)_{\rm{gr}}$ denote the restriction of $\calMor(X, G)$ to the category of geometrically reduced $k$-schemes. The exact sequence
\[
1 \longrightarrow \calMor((X, x), (G, 1))^+ \longrightarrow \calMor(X, G)_{\rm{gr}} \xlongrightarrow{f \mapsto f(x)} G \longrightarrow 1 
\]
of sheaves on the geometrically reduced \'etale site of $k$ splits: the map sending $g$ to the constant map $X \rightarrow g$ yields a section $G \rightarrow \calMor(X, G)^+$. We therefore have an equality of functors on the category of geometrically reduced $k$-schemes
\[
\calMor(X, G)_{\rm{gr}} = G \ltimes \calMor((X, x), (G, 1))^+.
\]
Furthermore, the scheme on the right is a subfunctor of $\calMor(X, G)$ because the scheme $\calMor((X, x), (G, 1))^+$ is a subfunctor of $\calMor((X, x), (G, 1))$. We therefore see that the scheme $\calMor(X, G)^+ := (G \ltimes \calMor((X, x), (G, 1))^+)^{{\rm{sm}}}$ is a subfunctor of $\calMor(X, G)$ which agrees with $\calMor(X, G)$ on the subcategory of geometrically reduced $k$-schemes. Furthermore, because the component group of $\calMor((X, x), (G, 1))^+$ has finitely-generated group of $k$-points, the same holds for $\calMor(X, G)^+$.
\end{proof}

For completeness, we also construct the moduli spaces $\calMor((X, x), (G, 1))^+$ described in Theorem \ref{modulipointedmorphisms}, but this time without connectedness assumptions on $X$.

\begin{theorem}
\label{modulipointedmorphismsgeneral}
For a field $k$, a geometrically reduced finite type $k$-scheme $X$ equipped with a $k$-point $x \in X(k)$, and a solvable, finite type $k$-group scheme $G$ not containing a $k$-subgroup scheme $k$-isomorphic to $\Ga$, there is a unique subfunctor $\calMor((X, x), (G, 1))^+ \subset \calMor((X, x), (G, 1))$ with the following two properties:
\begin{itemize}
\item[(i)] The inclusion $\calMor((X, x), (G, 1))^+ \subset \calMor((X, x), (G, 1))$ induces an equality on $T$-points for every geometrically reduced $k$-scheme $T$.
\item[(ii)] The functor $\calMor((X, x), (G, 1))^+$ is represented by a smooth $k$-group scheme.
\end{itemize}
Furthermore, the scheme in (ii), which we also denote $\calMor((X, x), (G, 1))^+$, has the property that its \'etale component group has finitely-generated group of $k_s$-points.
\end{theorem}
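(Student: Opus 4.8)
The plan is to reduce the general (possibly disconnected) case to the two cases already established: pointed morphisms out of a \emph{connected} source, which is Theorem \ref{modulipointedmorphisms}, and \emph{unpointed} morphisms out of an arbitrary geometrically reduced source, which is Theorem \ref{maintheoremtake2}. Since $X$ is of finite type over a field, it has finitely many connected components $X_1, \dots, X_n$, each open and closed in $X$, hence each geometrically reduced and of finite type; the $k$-point $x$ lies in exactly one of them, which I relabel $X_1$. Because $X_1$ is connected and carries the rational point $x$, it is a connected geometrically reduced finite type $k$-scheme with a $k$-point, so Theorem \ref{modulipointedmorphisms} applies to $(X_1, x)$ and furnishes $\calMor((X_1, x), (G, 1))^+$ with all the asserted properties.

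First I would record the elementary decomposition of the functor of pointed morphisms. For any $k$-scheme $T$, base change commutes with disjoint unions, so $X_T = \coprod_j (X_j)_T$, and a $T$-morphism $X_T \to G_T$ is the same as a tuple of $T$-morphisms $(X_j)_T \to G_T$. The pointing condition $f(x_T) = 1$ constrains only the factor indexed by $j = 1$, so there is a functorial group isomorphism
\[
\calMor((X, x), (G, 1)) \;=\; \calMor((X_1, x), (G, 1)) \times \prod_{j = 2}^{n} \calMor(X_j, G),
\]
an identity of group functors on the category of \emph{all} $k$-schemes. I would then define
\[
\calMor((X, x), (G, 1))^+ \;:=\; \calMor((X_1, x), (G, 1))^+ \times \prod_{j = 2}^{n} \calMor(X_j, G)^+,
\]
where the first factor comes from Theorem \ref{modulipointedmorphisms} and the remaining factors from Theorem \ref{maintheoremtake2} applied to the geometrically reduced finite type schemes $X_j$.

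All the required properties then follow factorwise. The displayed object is a subfunctor of $\calMor((X, x), (G, 1))$ because each factor is a subfunctor of the corresponding factor on the category of all $k$-schemes; it is represented by a smooth $k$-group scheme, since a finite product of smooth $k$-group schemes is one; and for geometrically reduced $T$ the inclusion is an equality, as this holds in each factor by property (i) of the two cited theorems. Finally, $\pi_0$ of a finite product is the product of the $\pi_0$'s, and a finite product of finitely generated groups is finitely generated, which yields the statement about the \'etale component group. Uniqueness is automatic from (i) and (ii) together with the Yoneda Lemma on the category of geometrically reduced $k$-schemes, exactly as in the previous theorems.

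The only point requiring care — and the one place the hypotheses genuinely enter — is the verification that the component $X_1$ is a legitimate input for Theorem \ref{modulipointedmorphisms}: that it is connected (clear, being a connected component), geometrically reduced (inherited from $X$ as an open subscheme), of finite type, and equipped with the rational point $x$. There is no serious obstacle beyond this bookkeeping; in particular, no Galois descent is needed here, in contrast with the connected case, because the disjoint-union decomposition and the resulting product formula are already defined over $k$, and the two cited theorems supply the needed $k$-group schemes directly.
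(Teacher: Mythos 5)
Your proposal is correct, but it takes a genuinely different route from the paper. The paper's own proof is two sentences: it takes the unpointed moduli space $\calMor(X, G)^+$ already constructed in Theorem \ref{maintheoremtake2} (which allows disconnected $X$), forms the kernel $W$ of the evaluation homomorphism $\calMor(X, G)^+ \rightarrow G$, $f \mapsto f(x)$, and defines $\calMor((X, x), (G, 1))^+$ to be the maximal smooth $k$-subgroup scheme of $W$; the smoothening step is forced because the kernel of a homomorphism of smooth group schemes need not be smooth. You instead split $X = \coprod_j X_j$ into connected components with $x \in X_1$ and set $\calMor((X, x), (G, 1))^+ := \calMor((X_1, x), (G, 1))^+ \times \prod_{j \geq 2} \calMor(X_j, G)^+$, quoting Theorem \ref{modulipointedmorphisms} for the first factor and Theorem \ref{maintheoremtake2} for the rest. (Amusingly, this component decomposition is exactly the device the paper uses inside its proof of Theorem \ref{maintheoremtake2} itself, just deployed at a different point of the logical chain.) Each approach has something to recommend it: the paper's is more compact and needs no case analysis, but it leaves to the reader the verification that the maximal smooth subgroup of $W$ satisfies (i), (ii) and, in particular, the component-group claim, which for a subgroup scheme of $\calMor(X,G)^+$ requires an argument (finite generation of the group of $k_s$-points is not automatically inherited by subgroups). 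Your construction gets smoothness for free (a finite product of smooth group schemes is smooth), verifies (i) and (ii) factorwise, and obtains the finitely generated $\pi_0(k_s)$ directly as a finite product of finitely generated groups, so it is arguably more self-contained modulo the two cited theorems; the one bookkeeping point you correctly isolate, that $X_1$ is a legitimate input for Theorem \ref{modulipointedmorphisms}, is indeed all that needs checking.
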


\begin{proof}
Let $W$ denote the kernel of the $k$-group scheme homomorphism $$\calMor(X, G)^+ \subset \calMor(X, G) \rightarrow G$$ sending $f$ to $f(x)$. Then $\calMor((X, x), (G, 1))^+$ is the maximal smooth $k$-subgroup scheme of $W$.
\end{proof}

\section{Moduli spaces of homomorphisms and geometric class field theory over a base}
\label{moduliofhomssection}

As an application of Theorem \ref{maintheoremtake2}, we will now construct moduli spaces of homomorphisms between group schemes and use them to show that one of the major results of geometric class field theory, Theorem \ref{albanese}, remains true after scalar extension to a geometrically reduced $k$-scheme, provided that the group $G$ does not contain a copy of $\Ga$ (for example, if $G$ is wound unipotent). We first construct the moduli spaces of homomorphisms.

\begin{theorem}
\label{modulispacesofhoms}
Let $G$ and $H$ be finite type $k$-group schemes over a field $k$, with $H$ smooth and $G$ solvable and not containing a $k$-subgroup scheme $k$-isomorphic to $\Ga$. Consider the functor
\[
\calHom(H, G)\colon \{\mbox{$k$-schemes}\} \rightarrow \{\mbox{sets}\}
\]
which sends $T$ to $\Hom_{\mbox{$T$-gps}}(H_T, G_T)$. Then there is a unique subfunctor $\calHom(H, G)^+ \subset \calHom(H, G)$ with the following two properties:
\begin{itemize}
\item[(i)] The inclusion $\calHom(H, G)^+ \subset \calHom(H, G)$ is an equality on $T$-points for every geometrically reduced $k$-scheme $T$.
\item[(ii)] The functor $\calHom(H, G)^+$ is represented by a geometrically reduced $k$-scheme locally of finite type.
\end{itemize}
Furthermore,
\begin{itemize}
\item[(1)] if $G$ is commutative, then $\calHom(H, G)^+$ is a smooth commutative $k$-group scheme whose identity component is wound unipotent and whose \'etale component group has finitely-generated group of $k_s$-points.
\item[(2)] if $S$ is a semiabelian variety, then $\calHom(H, S)^+$ is \'etale with finitely-generated group of $k_s$-points.
\item[(3)] if $U$ is wound unipotent, then $\calHom(H, U)^+$ is of finite type. If $U$ is commutative wound unipotent, then $\calHom(H, U)^+$ is finite type, smooth, and unipotent with wound identity component.
\end{itemize}
\end{theorem}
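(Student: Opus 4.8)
The plan is to realize $\calHom(H,G)$ as the vanishing locus of a ``multiplicativity defect'' inside the pointed-morphism functor $\calMor((H,1),(G,1))$, and to descend that locus to a scheme using the already-constructed pointed moduli spaces. Since $H$ is smooth (hence geometrically reduced and of finite type) with $1 \in H(k)$, Theorem \ref{modulipointedmorphismsgeneral} produces a smooth $k$-group scheme $M := \calMor((H,1),(G,1))^+$ representing $\calMor((H,1),(G,1))$ on geometrically reduced $k$-schemes, with wound unipotent identity component and finitely-generated component group. Applying the same theorem to the smooth pointed scheme $(H \times H, (1,1))$ gives $M_2 := \calMor((H\times H,1),(G,1))^+$. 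I regard both $\calMor$ functors as group functors under pointwise multiplication (this is legitimate even for noncommutative $G$, by composing with the multiplication of $G$), so that $M$ and $M_2$ are smooth $k$-group schemes whose identity section $e$ is the trivial morphism.

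Next I would introduce the natural transformation $c\colon \calMor((H,1),(G,1)) \to \calMor((H\times H,1),(G,1))$ sending a pointed $f$ to $c_f\colon (h,h') \mapsto f(h)f(h')f(hh')^{-1}$; this is again pointed, and $f$ is a homomorphism exactly when $c_f$ is trivial. Applying $c$ to the universal pointed morphism $U\colon H \times M \to G$ yields an $M$-point $c_U$ of $\calMor((H\times H,1),(G,1))$. The crucial point is that $M$ is smooth, hence geometrically reduced, so $c_U$ factors through $M_2$ and thereby defines an honest morphism of $k$-schemes $\tilde c\colon M \to M_2$. Since $M_2$ is separated, $e$ is a closed immersion, so $\tilde c^{-1}(e)$ is a closed subscheme of $M$, and I would set $\calHom(H,G)^+ := (\tilde c^{-1}(e))^\sharp$, the maximal geometrically reduced closed subscheme \cite[Lem.\,C.4.1]{cgp}; this is geometrically reduced and locally of finite type.

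The verification of (i) and (ii) is then formal: for geometrically reduced $T$ one has $M(T) = \calMor((H,1),(G,1))(T)$, and $\tilde c$ computes $c_f$, so $\tilde c^{-1}(e)(T)$ is precisely the set of homomorphisms, and such $T$-points factor through $(\tilde c^{-1}(e))^\sharp$. To see that $\calHom(H,G)^+$ is a subfunctor of $\calHom(H,G)$ on the category of \emph{all} $k$-schemes, I would pull back $U$ along an arbitrary $T$-point of $(\tilde c^{-1}(e))^\sharp$: compatibility of $c_U$ with base change forces the resulting pointed morphism $H_T \to G_T$ to have vanishing defect, hence to be a homomorphism. Uniqueness follows from (i), (ii) and Yoneda on geometrically reduced schemes, exactly as for Theorem \ref{maintheoremtake2}. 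The main obstacle is this step: one must guarantee that the homomorphism condition is encoded by a genuine morphism of schemes whose fiber captures homomorphisms over \emph{every} test scheme, and the device that makes this work is exactly the smoothness (geometric reducedness) of the representing scheme $M$, which lets the universal coboundary $c_U$ descend to an $M_2$-valued point.

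Finally, the assertions (1)--(3) read off from the structure of $M$. When $G$ is commutative, $c$ is a homomorphism of group functors, so $\tilde c$ is a $k$-group homomorphism and $\calHom(H,G)^+ = (\ker \tilde c)^\sharp$ is the maximal smooth subgroup scheme of $\ker \tilde c \subset M$; its identity component is a smooth connected subgroup of the wound unipotent $M^0$, hence wound unipotent, and its component group maps to the finitely-generated $\pi_0(M)$ with finite kernel $\pi_0(\calHom(H,G)^+ \cap M^0)$, hence is finitely generated, giving (1). A semiabelian $S$ is commutative and has $\calMor((H,1),(S,0))^+$ étale with finitely-generated $k_s$-points (Theorem \ref{pointedmorsemiabelianvar}), so $\calHom(H,S)^+$ is an étale subgroup with finitely-generated $k_s$-points, giving (2). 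For wound unipotent $U$, $M$ is of finite type (Theorem \ref{Mor(X,U)^+}), so $\calHom(H,U)^+$, being closed in $M$, is of finite type; in the commutative wound unipotent case the smoothness and unipotence with wound identity component come from (1), giving (3).
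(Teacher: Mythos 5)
Your construction of $\calHom(H,G)^+$ and the verification of (i)--(ii) --- realizing homomorphisms as the locus where the multiplicativity defect $\tilde c$ hits the identity, passing to the maximal geometrically reduced closed subscheme, and checking the subfunctor property on \emph{all} $k$-schemes by base-changing the universal morphism --- is essentially the paper's construction (the paper phrases it with the unpointed schemes $\calMor(H,G)^+ \rightarrow \calMor(H\times H, G)^+$, an immaterial difference). The gap is in how you deduce (1) and (2).

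You assert that $M = \calMor((H,1),(G,1))^+$ has wound unipotent identity component, citing Theorem \ref{modulipointedmorphismsgeneral}; but that theorem only gives finite generation of the $k_s$-points of the component group. The wound-identity-component statement is Theorem \ref{modulipointedmorphisms}, which requires the source to be \emph{connected}, and it genuinely fails for disconnected $H$: take $H = \Z/2\Z$ and $G = S$ an abelian variety. A pointed morphism $(H,1) \rightarrow (S,0)$ is just an arbitrary point of $S$ (its value on the non-identity component), so $M \cong S$, which is neither unipotent nor \'etale. Hence your key step in (1) (``a smooth connected subgroup of the wound unipotent $M^0$'') and your appeal to Theorem \ref{pointedmorsemiabelianvar} in (2) (also a statement about connected sources) both collapse exactly when $H$ is disconnected. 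The conclusions (1) and (2) are still true, but proving them is where the real work lies: the paper splits $H$ into $H^0$ and the finite \'etale group $E = H/H^0$, gets the connected case from Theorems \ref{modulipointedmorphisms} and \ref{pointedmorsemiabelianvar}, handles \'etale $E$ by a separate torsion argument (reduce to $E = \Z/N\Z$; homomorphisms then land in $G[N]$, so for instance $\calHom(E,S)^+ = \calHom(E,S[N])^+$ is finite because $S[N]$ is), and then splices the two cases together with exact sequences of sheaves on the geometrically reduced \'etale site (Lemma \ref{sesgeometalesite}, together with Proposition \ref{extsemiabalmunip}). By contrast, your finite-typeness argument in (3) is fine, and in the wound unipotent case your identity-component argument can be repaired by working inside $\calMor(H,U)^+$, which has wound identity component for arbitrary geometrically reduced finite type source by Theorem \ref{Mor(X,U)^+}; but for general commutative $G$ the disconnectedness of $H$ is a genuine obstruction to your shortcut.
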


\begin{proof}
Consider the map $\calMor(H, G)^+ \rightarrow \calMor(H \times H, G)^+$ sending a map $f$ to the map $(h, h') \mapsto f(hh')f(h')^{-1}f(h)^{-1}$. (This map is defined by applying Yoneda's Lemma to the category of geometrically reduced $k$-schemes.) The preimage of $c_1 \in \calMor(H \times H, G)^+(k)$, the constant map to $1$, is a scheme $X$ such that the functor $h_X$ of morphisms into $X$ is isomorphic to $\calMor(H, G)^+$. As usual, we may replace $X$ by its maximal geometrically reduced subscheme to obtain a locally finite type, geometrically reduced $k$-scheme representing $\calMor(H, G)^+$. This yields a geometrically reduced $k$-scheme locally of finite type $\calHom(H, G)^+$ which defines a subfunctor of $\calHom(H, G)$ and satisfies (i). When $G$ is commutative, the functor $\calHom(H, G)^+$ defines a commutative group valued functor on the category of geometrically reduced $k$-schemes, hence is even a commutative $k$-group scheme, hence smooth. The finite generation of the $k_s$-points of the component group follows from the corresponding property of $\calMor(H, G)^+$ (Theorem \ref{maintheoremtake2}). Property (3) follows from the corresponding property for the scheme $\calMor(H, U)^+$ (Theorem \ref{Mor(X,U)^+}). It remains to prove that $\calHom(H, S)^+$ is \'etale when $S$ is semiabelian, and that $\calHom(H, G)^+$ has wound identity component in general (for commutative $G$).

First consider the case in which $H$ is connected. The functor $\calHom(H, G)^+$ is also represented by the maximal smooth $k$-subgroup scheme of the kernel of the map $$\calMor((H, 1), (G, 1))^+ \rightarrow \calMor(H \times H, G)^+$$ sending a map $f$ to the map $(h, h') \mapsto f(hh') - f(h) - f(h')$. In particular, the properties (1) and (2) follow from the corresponding properties of the group scheme $\calMor((H, 1), (G, 1))^+$ listed in Theorem \ref{modulipointedmorphisms} and Theorem \ref{pointedmorsemiabelianvar}.

Now suppose that $H = E$ is \'etale. By Galois descent, we may assume that $k = k_s$, hence that $E$ is constant. We may replace $E$ by $E/\mathscr{D}E$ and thereby assume that $E$ is commutative, and we may further assume that $E = \Z/N\Z$ for some $N \geq 0$. For (2), we note that $\calHom(E, S)^+ = \calHom(E, S[N])^+$, and $S[N]$ is finite because $S$ is semiabelian. Then $\calHom(E, S[N])^+$ is represented by the constant $k$-group scheme associated to the finite group $\Hom(E(k), S[N](k))$, which proves (2) when $H$ is \'etale.

Next we show that $\calHom(E, G)^+$ has wound unipotent identity component for commutative $G$. Once again, we have $\calHom(E, G)^+ = \calHom(E, G[N])^+$. We may replace $G[N]$ by its maximal smooth $k$-subgroup scheme (because this does not affect the restriction of $\calHom(E, G[N])$ to the category of geometrically reduced $k$-schemes, hence does not affect $\calHom(E, G[N])^+$) and thereby assume that $G$ is smooth and $N$-torsion. The exact sequence
\[
1 \longrightarrow \calHom(E, G^0)^+ \xlongrightarrow{\psi} \calHom(E, G)^+ \longrightarrow \calHom(E, G/G^0)^+,
\]
of sheaves on the geometrically reduced \'etale site of $k$ is an exact sequence of $k$-group schemes, by Lemma \ref{sesgeometalesite}. Therefore, in order to prove that $\calHom(E, G)^+$ has wound unipotent identity component, it suffices to show the same for $\calHom(E, G^0)^+$, so we may assume (for the purpose of showing that $\calHom(E, G)^+$ has wound unipotent identity component) that $G$ is smooth, connected, and $N$-torsion. It follows that $G$ is (wound) unipotent. Then, since $E = \Z/N\Z$, we see that $\calHom(E, G)^+ \simeq G$ is wound unipotent.

Finally, we show in general that $\calHom(H, G)^+$ has wound unipotent identity component, and that $\calHom(H, S)^+$ is \'etale. By Lemma \ref{sesgeometalesite} and the already-treated case when $H$ is connected, the sequence
\[
1 \longrightarrow \calHom(H/H^0, S)^+ \longrightarrow \calHom(H, S)^+ \xlongrightarrow{\beta} \calHom(H^0, S)^+
\]
of $k$-group schemes is exact. Using the already-treated cases when $H$ is connected or \'etale, it follows that $\calHom(H, S)^+$ is \'etale, which completes the proof of (2). It remains to prove that $\calHom(H, G)^+$ has wound identity component for general $G$. Choosing an exact sequence as in Proposition \ref{extsemiabalmunip}, we obtain by Lemma \ref{sesgeometalesite} again an exact sequence of $k$-group schemes
\[
1 \longrightarrow \calHom(H, G') \longrightarrow \calHom(H, G) \longrightarrow \calHom(H, S).
\]
We find therefore that $\calHom(H, G')$ and $\calHom(H, G)$ have the same identity component, so we are reduced  to the case when $G$ is almost unipotent. We are free to replace $G$ by its maximal smooth $k$-subgroup scheme, and thereby assume that $G$ is smooth unipotent. In the exact sequence of sheaves on the geometrically reduced \'etale site
\[
1 \longrightarrow \calHom(H, G^0)^+ \longrightarrow \calHom(H, G)^+ \longrightarrow \calHom(H, G/G^0)^+,
\]
the term on the right agrees with $\calHom(H/H^0, G/G^0)^+$, hence is \'etale. Applying Lemma \ref{sesgeometalesite} yet again, therefore, we may assume that $G$ is connected -- that is,  $G$ is wound unipotent, and in this case we are done by (3).
\end{proof}

\begin{example}
\label{endof1dimlgpsexample}
By a theorem of Russell \cite[Th.\,3.1]{russell}, any $1$-dimensional wound unipotent group (equivalently, any nontrivial $k$-form of $\Ga$) $U$ over a field $k$ has only finitely many $k_s$-endomorphisms. It follows that $\calEnd(U)^+ := \calHom(U, U)^+$ is finite \'etale.
\end{example}

To show that this entire theory is not vacuous, we will also give an example of two wound $k$-forms $V$ and $U$ of $\Ga$ such that $\calHom(V, U)^+$ is positive-dimensional. Before doing so, however, we require a couple of auxiliary results which will allow us to give an explicit description of all homomorphisms from a unipotent group into $\Ga$ over an arbitrary base.

\begin{lemma}$($Division by $p$-polynomials$)$
\label{divisionbyp-polynomials}
Let $A$ be a ring of characteristic $p$, and let $F \in A[X_1, \dots, X_n]$ be a $p$-polynomial. Suppose that the leading term in $X_{i_0}$ of $F$ is $uX_{i_0}^{p^{n_{i_0}}}$ for some $n_{i_0} \geq 0$ and some $u \in A^{\times}$ $($so in particular $X_{i_0}$ appears in $F$$)$. Then
\begin{itemize}
\item[(i)] for every polynomial $H \in A[X_1, \dots, X_n]$, there is a unique polynomial $H' \in A[X_1, \dots, X_n]$ such that $H \equiv H' \pmod{F}$ and ${\rm{deg}}_{X_{i_0}}(H') < p^{n_{i_0}}$;
\item[(ii)] if $H$ is a $p$-polynomial, then so is $H'$.
\end{itemize}
\end{lemma}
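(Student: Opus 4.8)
The plan is to treat this as a Euclidean division with respect to the single variable $X_{i_0}$, absorbing the remaining variables into the coefficient ring, and then to verify separately that the division process respects the class of $p$-polynomials. Write $R := A[X_i \mid i \neq i_0]$ and identify $A[X_1, \dots, X_n] = R[X_{i_0}]$. By hypothesis, viewed as a polynomial in $X_{i_0}$ over $R$, the element $F$ has degree $p^{n_{i_0}}$ and leading coefficient $u \in A^\times \subset R^\times$, a unit. For part (i), I would invoke the standard division algorithm for a divisor with invertible leading coefficient to produce $Q, H' \in R[X_{i_0}]$ with $H = QF + H'$ and $\deg_{X_{i_0}}(H') < p^{n_{i_0}}$. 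Uniqueness follows because $u$ is in particular a non-zero-divisor: if $H'$ and $H''$ both satisfy the degree bound and are congruent mod $F$, then $(H' - H'')$ is a multiple $(Q'' - Q')F$ of $F$ of $X_{i_0}$-degree $< p^{n_{i_0}}$, which forces $Q'' - Q' = 0$ since multiplying any nonzero element of $R[X_{i_0}]$ by $F$ raises its $X_{i_0}$-degree by exactly $p^{n_{i_0}}$.

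For part (ii), the key structural input is that, in characteristic $p$, Frobenius preserves both the class of $p$-polynomials and congruence modulo $F$. Decompose $F = u X_{i_0}^{p^{n_{i_0}}} + L + G$, where $L = \sum_{j < n_{i_0}} c_{i_0,\,j} X_{i_0}^{p^j}$ collects the lower $p$-power terms in $X_{i_0}$ and $G \in R$ is the $p$-polynomial in the remaining variables; both $L$ and $G$ are themselves $p$-polynomials. This gives the congruence $u X_{i_0}^{p^{n_{i_0}}} \equiv -(L + G) \pmod{F}$. I would record two elementary facts: first, the $A$-module of $p$-polynomials is closed under the map $P \mapsto P^{p}$, since $(\sum_k c_k X_{i_k}^{p^{j_k}})^{p} = \sum_k c_k^{p} X_{i_k}^{p^{j_k + 1}}$ by the freshman's dream; second, $a \equiv b \pmod F$ implies $a^{p} \equiv b^{p} \pmod F$, because $a^{p} - b^{p} = (a-b)^{p}$ is divisible by $F$. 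Raising the above congruence through $m$ applications of Frobenius then yields, for every $m \geq 0$, the relation $X_{i_0}^{p^{n_{i_0} + m}} \equiv -u^{-p^m}(L + G)^{p^m} \pmod{F}$, whose right-hand side is a $p$-polynomial of $X_{i_0}$-degree strictly less than $p^{n_{i_0}+m}$ (indeed $G^{p^m}$ omits $X_{i_0}$ entirely, while $L^{p^m}$ involves $X_{i_0}$ only up to the power $p^{n_{i_0}-1+m}$).

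With this reduction rule in hand, I would argue by induction on the top $X_{i_0}$-power occurring in $H$. If that power is already below $p^{n_{i_0}}$, then $H$ satisfies the degree bound and is its own remainder. Otherwise, replace the unique top term $a X_{i_0}^{p^{n_{i_0}+m}}$ using the congruence above; the result is a $p$-polynomial congruent to $H$ modulo $F$ with strictly smaller top $X_{i_0}$-power, and the inductive hypothesis finishes the reduction. This produces a $p$-polynomial $\widetilde{H} \equiv H \pmod F$ with $\deg_{X_{i_0}}(\widetilde{H}) < p^{n_{i_0}}$, and the uniqueness established in (i) identifies $\widetilde{H}$ with $H'$, proving that $H'$ is a $p$-polynomial. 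The only delicate point is part (ii), and within it the essential observation is the Frobenius-compatibility that keeps the reduction inside the class of $p$-polynomials; once that is isolated, part (i) is routine Euclidean division.
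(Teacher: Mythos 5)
Your proof is correct and follows essentially the same route as the paper: part (i) is the standard division algorithm in $R[X_{i_0}]$ with unit leading coefficient, and part (ii) reduces the top $X_{i_0}$-term of a $p$-polynomial by subtracting a constant multiple of the Frobenius power $(u^{-1}F)^{p^m}$, which is itself a $p$-polynomial congruent to $0 \pmod{F}$ --- your rewrite rule $X_{i_0}^{p^{n_{i_0}+m}} \equiv -u^{-p^m}(L+G)^{p^m}$ is exactly this subtraction, unwound. The only difference is presentational: you run the descent as an explicit induction on the top $X_{i_0}$-power, whereas the paper packages it as a minimal-degree-representative argument by contradiction.
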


\begin{proof}
The existence and uniqueness of $H'$ given in (i) are just the usual division algorithm in the ring $R[X_{i_0}]$, with $R := A[X_i \mid i \neq i_0]$, using the fact that the leading coefficient in $X_{i_0}$ is a unit of $R$. For the statement in (ii), let $G \in A[X_1, \dots, X_n]$ be a $p$-polynomial of minimal degree in $X_{i_0}$ such that $H \equiv G \pmod{F}$. We will show that $G = H'$ by showing that ${\rm{deg}}_{X_{i_0}}(G) < p^{n_{i_0}}$. 

Indeed, suppose to the contrary that ${\rm{deg}}_{X_{i_0}}(G) \geq p^{n_{i_0}}$. Then we may write $G = \sum_{i=1}^n \sum_{j=0}^{N_i} c_{i,\,j}X_i^{p^j}$ for some $N_i \geq -1$ with $N_{i_0} \geq n_{i_0}$ and $c_{i_0,\,N_{i_0}} \neq 0$. Write $F = \sum_{i=1}^n \sum_{j=0}^{n_i} b_{i,\,j}X_i^{p^j}$ with $b_{i_0,\, n_{i_0}} = u$. Then $$G - c_{i_0,\,N_{i_0}}(u^{-1}F)^{p^{N_{i_0} - n_{i_0}}}$$ is a $p$-polynomial congruent to $G$ modulo $F$ whose degree in $X_{i_0}$ is strictly smaller than that of $G$. This violates the minimality of ${\rm{deg}}_{X_{i_0}}(G)$ and completes the proof of the lemma.
\end{proof}

We now describe all homomorphisms into $\Ga$ from a group given as the vanishing locus of a $p$-polynomial.

\begin{proposition}
\label{mapsfromvanishingppolynintoGa}
Let $k$ be a field of characteristic $p > 0$, let $F \in k[X_1, \dots, X_n]$ be a $p$-polynomial, and suppose that ${\rm{deg}}_{X_{i_0}}(F) = p^{N_{i_0}}$ for some $N_{i_0} \geq 0$ and some $1 \leq i_0 \leq n$. Let $U := \{F = 0\} \subset \mathbf{G}_{a,\,k}^n$ be the $k$-group scheme defined as the vanishing locus of $F$. Then for any $k$-scheme $T$, every $T$-group homomorphism $U_T \rightarrow \mathbf{G}_{a,\,T}$ is of the form $G(X_1, \dots, X_n)$ for a unique $p$-polynomial $G \in {\rm{H}}^0(T, \calO_T)[X_1, \dots, X_n]$ with ${\rm{deg}}_{X_{i_0}}(G) < p^{N_{i_0}}$.
\end{proposition}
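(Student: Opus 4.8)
The plan is to reduce to the case of affine $T = \Spec A$ and then identify homomorphisms with additive functions on the coordinate ring, the central point being that additivity \emph{modulo $F$} is in fact genuine additivity once one passes to the canonical reduced representative supplied by Lemma \ref{divisionbyp-polynomials}. First I would reduce to $T = \Spec A$ affine: a $T$-morphism $U_T \to \mathbf{G}_{a,\,T}$ is a global section of $\calO_{U_T}$, and both the asserted representation and its uniqueness may be produced Zariski-locally on $T$ and then glued, the local data agreeing on overlaps by uniqueness. For affine $T$ we have $U_T = \Spec(A[X_1,\dots,X_n]/(F))$ and $U_T \times_T U_T = \Spec(A[X,Y]/(F(X), F(Y)))$, where I write $X = (X_1,\dots,X_n)$ and $Y = (Y_1,\dots,Y_n)$. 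Thus a $T$-morphism $U_T \to \mathbf{G}_{a,\,T}$ is a class $\overline{g} \in A[X]/(F)$, and it is a homomorphism precisely when
\[
g(X + Y) - g(X) - g(Y) \in (F(X), F(Y)) \subset A[X, Y].
\]
The hypothesis ${\rm{deg}}_{X_{i_0}}(F) = p^{N_{i_0}}$ means the coefficient $u$ of $X_{i_0}^{p^{N_{i_0}}}$ in $F$ is a nonzero element of $k$, hence a unit in $A$, so Lemma \ref{divisionbyp-polynomials} applies to $F \in A[X]$; I replace $g$ by its unique representative with ${\rm{deg}}_{X_{i_0}}(g) < p^{N_{i_0}}$.

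The crux is to upgrade the displayed congruence to the honest identity $g(X+Y) = g(X) + g(Y)$ in $A[X, Y]$. Since $F$ has unit leading coefficient in $X_{i_0}$, the ring $C := A[X]/(F)$ is free over $A$ with basis the monomials of $X_{i_0}$-degree $< p^{N_{i_0}}$ times arbitrary monomials in the remaining $X_i$; hence $A[X,Y]/(F(X), F(Y)) \simeq C \otimes_A C$ is free over $A$ on those monomials whose $X_{i_0}$- and $Y_{i_0}$-degrees are \emph{both} $< p^{N_{i_0}}$. Consequently, any element of $(F(X), F(Y))$ that is reduced in both $X_{i_0}$ and $Y_{i_0}$ (degree $< p^{N_{i_0}}$ in each) must vanish. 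Now $g(X)$ and $g(Y)$ are reduced in both variables, and $g(X+Y)$ is too: substituting $X_{i_0} + Y_{i_0}$ into a monomial $X_{i_0}^{e}$ with $e < p^{N_{i_0}}$ yields only monomials $X_{i_0}^{a} Y_{i_0}^{e-a}$ with $a, e-a < p^{N_{i_0}}$. Therefore $g(X+Y) - g(X) - g(Y)$ is reduced in both $X_{i_0}$ and $Y_{i_0}$, lies in $(F(X), F(Y))$, and so is identically zero.

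It remains to deduce that a genuinely additive $g$ over the $\F_p$-algebra $A$ is a $p$-polynomial. Writing $g = \sum_\alpha c_\alpha X^\alpha$ and comparing coefficients of $X^\beta Y^\gamma$ in $g(X+Y) = g(X) + g(Y)$, one finds for every monomial $X^\alpha$ of $g$ and every decomposition $\alpha = \beta + \gamma$ with $\beta, \gamma$ both nonzero that $c_\alpha \prod_i \binom{\alpha_i}{\beta_i} = 0$. If $\alpha$ is supported on at least two variables, choosing $\beta$ to be $\alpha$ restricted to a single variable of its support gives $\prod_i \binom{\alpha_i}{\beta_i} = 1$, forcing $c_\alpha = 0$; so $g = \sum_i g_i(X_i)$. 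In one variable, the same comparison together with the classical fact that $\binom{d}{e} \not\equiv 0 \pmod p$ for some $0 < e < d$ unless $d$ is a power of $p$ forces every surviving exponent to be a power of $p$, so each $g_i$, and hence $g$, is a $p$-polynomial; its $X_{i_0}$-degree is $< p^{N_{i_0}}$ by construction. Uniqueness of such a representation is exactly Lemma \ref{divisionbyp-polynomials}(i), since two reduced representatives of the same homomorphism are congruent modulo $F$.

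The main obstacle is the middle step: promoting additivity modulo $F$ to genuine additivity. This is precisely what the freeness of $C \otimes_A C$ and the degree bookkeeping under the substitution $X_{i_0} \mapsto X_{i_0} + Y_{i_0}$ are arranged to deliver; once honest additivity is in hand, the passage to a $p$-polynomial is the standard binomial-coefficient computation over an $\F_p$-algebra.
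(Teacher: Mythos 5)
Your proof is correct, and it takes a genuinely different route from the paper's. The paper reduces existence to an extension problem: it suffices to show that every $A$-homomorphism $U_A \rightarrow \mathbf{G}_{a,\,A}$ extends to a homomorphism $\mathbf{G}_{a,\,A}^n \rightarrow \mathbf{G}_{a,\,A}$. It proves this first over $A_{k_{\perf}}$, where $U_{k_{\perf}}$ becomes a vector group and hence a direct factor of $(\mathbf{G}_{a,\,k_{\perf}})^n$ by \cite[Cor.\,B.2.7, Cor.\,B.1.12]{cgp}, and then descends to $A$ by a remainder-comparison trick: the reduced representative (in the sense of Lemma \ref{divisionbyp-polynomials}) of the resulting $p$-polynomial over $A_{k_{\perf}}$ coincides, by the uniqueness in that lemma, with the reduced representative of an ordinary polynomial representative defined over $A$, so it has coefficients in $A$ and is a $p$-polynomial by part (ii) of the lemma. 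You instead work entirely inside the Hopf algebra: homomorphisms are classes additive modulo $(F(X), F(Y))$; the $A$-freeness of $A[X,Y]/(F(X),F(Y))$ on the doubly reduced monomials, together with the observation that reducedness in $X_{i_0}$ survives the substitution $X_{i_0} \mapsto X_{i_0}+Y_{i_0}$, upgrades the congruence to an honest identity for the reduced representative; and the binomial-coefficient computation identifies genuinely additive polynomials over an $\F_p$-algebra with $p$-polynomials. What your route buys: it is elementary and self-contained (no structure theory from \cite{cgp}, no base change to $k_{\perf}$), and it never uses smoothness or connectedness of $U$ --- hypotheses the proposition does not impose but which the paper's citation of \cite[Cor.\,B.2.7]{cgp} nominally requires (your argument covers, e.g., $F = X_1^p$ or $F = X_1^p - X_1$ with no extra care). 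What the paper's route buys: Lemma \ref{divisionbyp-polynomials}(ii) absorbs your additivity analysis, making the written proof shorter, and the descent-by-uniqueness-of-remainders device is reusable. Two small points to patch in your write-up: the exponent $\alpha = 0$ admits no decomposition into two nonzero pieces, so the constant term must be killed separately (additivity forces $g(0) = 0$, or simply note that a homomorphism respects the identity sections); and when you invoke that $\binom{d}{e} \not\equiv 0 \pmod{p}$ for some $0 < e < d$ whenever $d$ is not a $p$-power, you should add that this value is then a unit of $A$ because $A$ is an $\F_p$-algebra, which is exactly what lets you conclude $c_d = 0$.
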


\begin{proof}
Thanks to the uniqueness, the assertion is Zariski local on $T$, so we may assume that $T = {\rm{Spec}}(A)$ is affine. The uniqueness of the $p$-polynomial $G$ associated to a given homomorphism follows from the uniqueness assertion in Lemma \ref{divisionbyp-polynomials}. It remains to prove existence. Since an $A$-group homomorphism $\mathbf{G}_{a,\,A}^n \rightarrow \mathbf{G}_{a,\,A}$ is given by a $p$-polynomial, and applying the division lemma \ref{divisionbyp-polynomials}, we see that the assertion we must prove is that any $A$-homomorphism $U_A \rightarrow \mathbf{G}_{a,\,A}$ extends to a homomorphism $\mathbf{G}_{a\,A}^n \rightarrow \mathbf{G}_{a,\,A}$ (via the inclusion $U \hookrightarrow \Ga^n$ coming from the definition of $U$ as the vanishing locus of $F$).

We first note that the assertion is true after passing to $T_{k_{\perf}}$. Indeed, by \cite[Cor.\,B.2.7]{cgp}, 
$U_{k_{\rm{perf}}}$ is a vector group, so by \cite[Cor.\,B.1.12]{cgp}, $U_{k_{\perf}} \subset (\mathbf{G}_{a,\,k_{\perf}})^n$ is a $k_{\perf}$-group direct factor, hence every homomorphism from $U_{A_{k_{\perf}}}$ into $\mathbf{G}_{a,\,A_{k_{\perf}}}$ extends to 
one from $(\mathbf{G}_{a,\,A_{k_{\perf}}})^n$. We therefore obtain a $p$-polynomial $G \in A_{k_{\perf}}[X_1, \dots, X_n]$ defining our homomorphism $\phi$. On the other hand, because $\phi$ is defined over $A \subset A_{k_{\perf}}$, there is a polynomial (not necessarily a $p$-polynomial) $H \in A[X_1, \dots, X_n]$ such that $G \equiv H \pmod{F}$. Applying the division Lemma \ref{divisionbyp-polynomials}, we obtain remainders $G' \in A_{k_{\perf}}[X_1, \dots, X_n]$ and $H' \in A[X_1, \dots, X_n]$ both of degree $< p^{N_{i_0}}$ in $X_{i_0}$ such that $G'$ is a $p$-polynomial and $G \equiv G' \pmod{F}$ and $H \equiv H' \pmod{F}$. It follows that $G'$ and $H'$ are both remainders for $G$ modulo $F$ over $A_{k_{perf}}[X_1, \dots, X_n]$ with respect to $X_{i_0}$, hence -- by the uniqueness assertion of Lemma \ref{divisionbyp-polynomials} -- $G' = H'$. It follows that $G' \in A[X_1, \dots, X_n]$ is a $p$-polynomial {\em over $A$} which defines $\phi$. This completes the proof of the proposition.
\end{proof}

\begin{example}
\label{Homschemeexample}
We now compute an example of a positive-dimensional Hom scheme between $1$-dimensional wound unipotent groups. Let $k$ be an imperfect field of characteristic $p$, let $a \in k - k^p$, and consider the groups
\[
V := \{X^{p^2} - X + aY^{p^2} = 0\} \subset \Ga^2,
\]
\[
U := \{X^p - X + aY^p = 0\} \subset \Ga^2.
\]
We first note that $V$ and $U$ are smooth because they are defined by $p$-polynomials with nonzero linear part. Next, the map $V \rightarrow \Ga$, $(X, Y) \mapsto X$, is surjective with connected kernel, hence $V$ is connected. Similar considerations with the map $U \rightarrow \Ga$, $(X, Y) \mapsto X$, show that $U$ is connected. Finally, $U$ and $V$ are wound because the $p$-polynomials defining them have principal parts with no nontrivial zeroes over $k$ \cite[Lem.\,B.1.7]{cgp}. We will compute $\calHom(V, U)^+$.

Consider the group
\[
W := \{X^{p^2} - X + aY^p = 0\} \subset \Ga^2.
\]
Similar arguments to those above show that $W$ is wound unipotent (though all we will require for the calculation to follow is that $W$ is smooth). We have a bi-additive map
\[
b\colon W \times V \rightarrow U
\]
given by the formula
\[
b((X', Y'), (X, Y)) := (X(X')^p + X^pX', XY' + X'Y^p).
\]
This map defines a $k$-group homomorphism $\phi_b\colon W \rightarrow \calHom(V, U)^+$. We will show that, for $p > 2$, $\phi_b$ is an isomorphism. This amounts to showing that $\phi_b$ induces an isomorphism on $T$-points for every geometrically reduced, finite type $k$-scheme (or even just every smooth $k$-scheme) $T$.

So suppose given such a $T$. We first check that $\phi_b$ is injective on $T$-points. By Proposition \ref{mapsfromvanishingppolynintoGa}, every $T$-homomorphism $V_T \rightarrow \mathbf{G}_{a,\,T}$ is given by a {\em unique} $p$-polynomial $G \in {\rm{H}}^0(T, \calO_T)[X, Y]$ such that ${\rm{deg}}_X(G) \leq p$. The uniqueness implies that an element $(X', Y') \in W(T)$ defines the zero homomorphism $V_T \rightarrow U_T$ via $b$ if and only if $X' = Y' = 0$.

Now we check surjectivity. Suppose given a $T$-homomorphism $\psi\colon V_T \rightarrow U_T$. Applying Proposition \ref{mapsfromvanishingppolynintoGa} to each coordinate of the composition $V_T \xrightarrow{\psi} U_T \hookrightarrow \mathbf{G}_{a,\,T}^2$, we see that $\psi$ is of the form
\[
(X, Y) \mapsto (cX + dX^p + F(Y), eX + fX^p + G(Y))
\]
for some $c, d, e, f \in {\rm{H}}^0(T, \calO_T)$ and some $p$-polynomials $F, G \in {\rm{H}}^0(T, \calO_T)[Y]$. The fact that this lands in $U_T$ says that
\[
(cX + dX^p + F(Y))^p - (cX + dX^p + F(Y)) + a(eX + fX^p + G(Y))^p = 0,
\]
or eliminating terms containing $X^{p^2}$ by using the equation for $(X, Y) \in V$,
\[
(d^p - c + af^p)X + (c^p - d + ae^p)X^p + (F(Y)^p - F(Y) + aG(Y)^p - (ad^p +a^2f^p)Y^{p^2}) = 0.
\]
Using the uniqueness aspect of Proposition \ref{mapsfromvanishingppolynintoGa} for maps $V_T \rightarrow \mathbf{G}_{a,\,T}$, we conclude from the above equation that
\begin{equation}
\label{Homschemeexampleeqn1}
c = d^p + af^p
\end{equation}
\begin{equation}
\label{Homschemeexampleeqn2}
d = c^p + ae^p
\end{equation}
\begin{equation}
\label{Homschemeexampleeqn3}
F(Y)^p - F(Y) + aG(Y)^p - (ad^p + a^2f^p)Y^{p^2} = 0.
\end{equation}
If $F(Y) \neq 0$, then let $g$ be the leading coefficient of $F$. Then equation (\ref{Homschemeexampleeqn3}) implies that
\[
g^p + ar_1^p + a^2r_2^p = 0
\]
for some $r_1, r_2 \in {\rm{H}}^0(T, \calO_T)$. Assume from now on that $p > 2$. Then it follows that, for every $t \in T(k_s)$, one has $g(t) = 0$. Because $T$ is geometrically reduced of finite type over $k$, $T(k_s)$ is Zariski dense in $T$, so we conclude that $g = 0$, a contradiction. It follows that in fact $F(Y) = 0$. Using equation (\ref{Homschemeexampleeqn3}) again, we deduce that 
\begin{equation}
\label{Homschemeexampleeqn4}
a(fY^p)^p = (G(Y) - dY^p)^p.
\end{equation}
We conclude that $f(t) = 0$ for every $t \in T(k_s)$, hence that $f = 0$. Therefore, because $T$ is reduced, equation (\ref{Homschemeexampleeqn4}) implies that
\[
G(Y) = dY^p.
\]
On the other hand, because $f = 0$, equation (\ref{Homschemeexampleeqn1}) implies that $c = d^p$, and substituting this into (\ref{Homschemeexampleeqn2}), we find that $d = d^{p^2}  + ae^p$. That is, $(d, e) \in W(T)$. We therefore find that $\psi$ is given by the formula
\[
(X, Y) \mapsto (d^pX + dX^p, eX + dY^p) = b((d, e), (X, Y)).
\]
We therefore see that $\psi = \phi_b((d, e)) \in \phi_b(W(T))$. It follows that $\phi_b$ is surjective on $T$-points. We therefore conclude that $\phi_b$ is an isomorphism when $p > 2$, as claimed.

When $p = 2$, one can also compute $\calHom(V, U)^+$. Let
\[
W_2 := \{X^4 + X + aY^2 + a^2Z^8 = 0\} \subset \Ga^3.
\]
The map $W_2 \rightarrow \Ga^2$, $(X, Y, Z) \mapsto (X, Z)$, is surjective with connected kernel, hence $W_2$ is connected. It is also smooth, and it is wound because its defining $p$-polynomial has principal part with no nontrivial zeroes over $k$ \cite[Lem.\,B.1.7]{cgp}. We have a bi-additive map
\[
b_2\colon W_2 \times V \rightarrow U
\]
given by the formula
\[
b_2((X', Y', Z'), (X, Y)) := (X(X')^2 + aX(Z')^4 + X^2X' + aY^2(Z')^2, XY' + X^2(Z')^2 + YZ' + Y^2X').
\]
A calculation similar to the one above for $p \neq 2$, but slightly more involved, shows that $b_2$ induces a $k$-group isomorphism $W_2 \xrightarrow{\sim} \calHom(V, U)^+$. We leave the details to the interested reader.
\end{example}

Recall Theorem \ref{albanese}, which asserts that generalized Jacobians of curves have an Albanese type property with respect to maps into commutative group schemes. We would like to show that, for curves and (certain) groups starting life over a field, this remains true after base change by a geometrically reduced $k$-scheme. We first require a lemma.

\begin{lemma}
\label{containGasepble}
Let $K/k$ be a separable extension of fields, and let $G$ be a solvable, finite type $k$-group scheme. Then $G$ admits a $k$-group inclusion from $\Ga$ if and only if $G_K$ admits a $K$-group inclusion from $\mathbf{G}_{a,\,K}$.
\end{lemma}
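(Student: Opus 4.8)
The plan is to prove the biconditional by peeling $G$ down to a single smooth connected unipotent group $U$, obtained by a construction that commutes with the separable base change $K/k$, and then to reduce everything to the fact that woundness is insensitive to separable extensions \cite[Prop.\,B.3.2]{cgp}. The forward implication is in any case trivial, since a $k$-subgroup inclusion $\Ga \hookrightarrow G$ base-changes to a $K$-subgroup inclusion $\mathbf{G}_{a,\,K} \hookrightarrow G_K$; the content is to show that each of the two conditions in the statement is equivalent to a single assertion about $U$.

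First I would reduce to the case that $G$ is smooth and connected. A copy of $\Ga$ (resp. of $\mathbf{G}_{a,\,K}$) is smooth and connected, and any $k$-morphism from a geometrically reduced scheme into $G$ factors through the maximal smooth $k$-subgroup scheme $G^{\sm}$ \cite[Lem.\,C.4.1, Rem.\,C.4.2]{cgp}; hence $G$ contains $\Ga$ if and only if $(G^{\sm})^0$ does, and similarly over $K$. The key point here is that these operations commute with the extension $K/k$: for a smooth $K$-scheme $T$, separability of $K/k$ makes $T$ geometrically reduced over $k$, so $\Hom_K(T,(G^{\sm})_K)=\Hom_k(T,G)=\Hom_K(T,G_K)$, whence $(G^{\sm})_K=(G_K)^{\sm}$; and the formation of the identity component commutes with any field extension. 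Replacing $G$ by $(G^{\sm})^0$, I may therefore assume $G$ is smooth connected (and still solvable).

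Next I would strip off the semiabelian part. By Proposition \ref{extsemiabalmunip} I may write $1 \to G' \to G \to S \to 1$ with $G'$ almost unipotent and $S$ semiabelian; base-changing yields $1 \to G'_K \to G_K \to S_K \to 1$ with $S_K$ again semiabelian and $G'_K=\ker(G_K\to S_K)$, since kernels commute with base change. Because a unipotent group admits no nonzero homomorphism into an abelian variety or into a torus, one has $\Hom(\Ga,S)=0$ and $\Hom(\mathbf{G}_{a,\,K},S_K)=0$; hence any copy of the additive group in $G$ (resp. $G_K$) has trivial image in $S$ (resp. $S_K$) and so lies in $G'$ (resp. $G'_K$). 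Thus $G$ contains $\Ga$ iff $G'$ does, and likewise over $K$. I then set $U:=((G')^{\sm})^0$. As in the previous paragraph $U$ is smooth connected, and it is unipotent: over $\overline{k}$ it is a smooth connected subgroup of $(G'_{\overline{k}})^{0}_{\red}$, which is unipotent by almost-unipotence of $G'$. A copy of $\Ga$ in $G'$, being smooth and connected, lands in $U$, so $G'$ contains $\Ga$ iff $U$ does; and since $(G^{\sm})$ and identity components commute with the separable base change, $U_K=((G'_K)^{\sm})^0$, so the identical chain over $K$ shows that $G_K$ contains $\mathbf{G}_{a,\,K}$ iff $U_K$ does.

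Finally I would conclude using woundness. Now $U$ is smooth connected unipotent over $k$, so by \cite[Prop.\,B.3.2]{cgp} (and the equivalence, recalled in the introduction, between containing a copy of the additive group and not being wound) one has: $G$ contains $\Ga$ iff $U$ is not $k$-wound, and $G_K$ contains $\mathbf{G}_{a,\,K}$ iff $U_K$ is not $K$-wound. The decisive input \cite[Prop.\,B.3.2]{cgp} asserts precisely that, for the separable extension $K/k$, the group $U$ is $k$-wound if and only if $U_K$ is $K$-wound, and combining the displayed equivalences closes the argument. I expect the only real subtlety to be the bookkeeping of Steps one and two — verifying that passage to $G^{\sm}$, to identity components, to the kernel $G'$, and the semiabelian property all commute with the (possibly transcendental) separable extension $K/k$ — whereas the genuine mathematical content is entirely the separable-invariance of woundness, which is quoted from \cite{cgp}.
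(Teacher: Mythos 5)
Your proof is correct, but its core runs along a genuinely different track from the paper's. The two arguments share the outer reductions --- the forward direction is immediate, one replaces $G$ by $(G^{\sm})^0$ using the fact that formation of the maximal smooth subgroup commutes with separable extensions \cite[Lem.\,C.4.1]{cgp}, and one invokes Proposition \ref{extsemiabalmunip} --- but they diverge afterwards. The paper keeps only the weaker consequence that $G$ is an extension of an abelian variety by an \emph{affine} group, hence reduces to the smooth connected affine case; since a solvable affine group may still contain a torus, it must then appeal to pseudo-reductive theory: the quotient of $G$ by its $k$-unipotent radical is solvable pseudo-reductive, hence commutative \cite[Prop.\,1.2.3]{cgp}, pseudo-reductivity is preserved by separable extensions \cite[Prop.\,1.1.9(1)]{cgp}, and woundness invariance \cite[Prop.\,B.3.2]{cgp} disposes of the unipotent radical. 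You instead use Proposition \ref{extsemiabalmunip} at full strength: the kernel $G'$ is almost unipotent, so $U:=((G')^{\sm})^0$ is smooth connected \emph{unipotent}; every copy of the additive group in $G$ (over $k$ or over $K$) is trapped inside $U$ because $\Hom(\Ga,S)=0$ for $S$ semiabelian and because passage to $(\cdot)^{\sm}$ and to identity components commutes with the separable extension $K/k$; the lemma then collapses to the single input that woundness is insensitive to separable extensions. What your route buys is economy --- no pseudo-reductive theory at all, and both implications fall out of one chain of equivalences --- whereas the paper's route is what one is forced into after remembering only affineness of the kernel. Your compressed steps are all legitimate: $(G^{\sm})_K=(G_K)^{\sm}$ is precisely the separable-extension compatibility asserted in \cite[Lem.\,C.4.1]{cgp}, and unipotence of $U$ does follow, as you say, from almost unipotence of $G'$ checked over $\overline{k}$.
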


\begin{proof}
The only if direction is clear, so we concentrate on the if direction. First consider the case in which $G$ is smooth, connected, and affine. Assume that $G$ admits no copy of $\Ga$ over $k$. Let $U \trianglelefteq G$ be the $k$-unipotent radical of $G$ -- that is, the maximal smooth connected normal unipotent $k$-subgroup of $G$. Then $G/U$ is solvable pseudo-reductive, hence commutative \cite[Prop.\,1.2.3]{cgp}, and $(G/U)_K$ is still pseudo-reductive \cite[Prop.\,1.1.9(1)]{cgp}, hence admits no nonzero $K$-homomorphism from $\Ga$. Since $U \subset G$, $U$ must be wound, hence $U_K$ is still wound. Since $G$ is an extension of $G/U$ by $U$, it follows that $G$ admits no nonzero homomorphisms from $\Ga$ over $K$.

Now consider the general case. Because $G$ admits a copy of $\Ga$ if and only if its maximal smooth $k$-subgroup scheme does, and because the formation of the maximal smooth subgroup commutes with separable extension of the ground field \cite[Lem.\,C.4.1]{cgp}, in order to prove the lemma for $G$, it is equivalent to prove it for its maximal smooth $k$-subgroup scheme. Similarly, we may replace a given $G$ by its identity component. In particular, we may assume that $G$ is smooth and connected. Proposition \ref{extsemiabalmunip} then furnishes an exact sequence
\[
1 \longrightarrow G' \longrightarrow G \longrightarrow A \longrightarrow 1
\]
with $G'$ affine and $A$ an abelian variety. Then $G$ admits a copy of $\Ga$ if and only if $G'$ does, and similarly over $K$. We may therefore replace $G$ with $G'$, and, as discussed above, we may replace $G'$ with the identity component of its maximal smooth $k$-subgroup, which is affine. Thus we have reduced to the already-treated smooth connected affine case.
\end{proof}

We now prove that the Albanese property for generalized Jacobians of curves asserted in Theorem \ref{albanese} remains true after geometrically reduced base change.

\begin{theorem}
\label{modulifromcurvesequalsfromjacobians}
Let $X$ be a smooth curve over a field $k$, with regular compactification $\overline{X}$, and let $x \in X(k)$. Let $D \subset \overline{X}$ be a divisor with support $\overline{X}\backslash X$. Then for any finite type commutative $k$-group scheme $G$ not containing a $k$-subgroup scheme $k$-isomorphic to $\Ga$, the natural $k$-group homomorphism
\[
i_x^*\colon \calHom(\Jac_{D}(\overline{X}), G)^+ \rightarrow \calMor((X, x), (G, 1))^+
\]
is a $k$-group isomorphism.
\end{theorem}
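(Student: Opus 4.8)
The plan is to realize $i_x^*$ as a homomorphism of the smooth $k$-group schemes $\calHom(\Jac_D(\overline{X}), G)^+$ (smooth and commutative by Theorem \ref{modulispacesofhoms}(1)) and $\calMor((X,x),(G,1))^+$ (Theorem \ref{modulipointedmorphisms}), and to prove that it is simultaneously a monomorphism and surjective on $k_s$-points. A monomorphism of $k$-group schemes locally of finite type is a closed immersion, and a surjective closed immersion onto a reduced scheme is an isomorphism; so once I know the image is closed, meets every $k_s$-point of the target, and that these are Zariski dense, I am done. At the outset I may replace $G$ by its maximal smooth $k$-subgroup scheme $G^{\mathrm{sm}}$: every $T$-morphism out of the smooth schemes $X$ or $\Jac_D(\overline X)$ factors through $G^{\mathrm{sm}}$ for geometrically reduced $T$ by \cite[Lem.\,C.4.1, Rem.\,C.4.2]{cgp}, so both the Hom- and Mor-functors (hence their $+$-schemes and the map $i_x^*$) are unchanged, while $G^{\mathrm{sm}}$ remains commutative and omits $\Ga$.

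For the monomorphism I use that the generalized Jacobian is generated by the image of the Abel map: there is an integer $N$ and a surjection $m\colon (X \times X)^N \to \Jac_D(\overline X)$ sending $((y_j, z_j))_j$ to $\sum_j (i_x(y_j) - i_x(z_j))$. Because $\Jac_D(\overline X)$ is reduced and $m$ is surjective, $m$ is scheme-theoretically dominant, and this property is preserved by the flat base change $k \to R$ for every $k$-algebra $R$. Hence for any homomorphism $\psi\colon \Jac_D(\overline X)_R \to G_R$ with $\psi\circ (i_x)_R = 0$, commutativity of $G$ gives $\psi \circ m_R = 0$, and scheme-theoretic dominance of $m_R$ together with separatedness of $G$ forces $\psi = 0$. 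Thus $i_x^*$ is injective on $R$-points for every $R$, i.e.\ it has trivial kernel, so it is a monomorphism and therefore a closed immersion.

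The surjectivity on $k_s$-points is the crux, and is where the omission of $\Ga$ from $G$ enters decisively. On $k_s$-points the $+$-functors agree with the honest functors, so I must factor an arbitrary pointed $k_s$-morphism $f\colon (X_{k_s}, x) \to (G_{k_s}, 1)$ through $i_x$ by a $k_s$-homomorphism out of $\Jac_D(\overline X)_{k_s} = \Jac_{D_{k_s}}(\overline X_{k_s})$, using that $\overline X_{k_s}$ remains the regular compactification since $k_s/k$ is separable. Theorem \ref{albanese} supplies a divisor $D'$ with support $\overline X_{k_s} \setminus X_{k_s}$ and a homomorphism $\psi'\colon \Jac_{D'}(\overline X_{k_s}) \to G_{k_s}$ with $\psi' \circ i_x = f$; enlarging $D'$ I may assume $D' \geq D_{k_s}$. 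By Lemma \ref{thickendivisor} the kernel $U$ of $\Jac_{D'}(\overline X_{k_s}) \twoheadrightarrow \Jac_{D_{k_s}}(\overline X_{k_s})$ is split unipotent, so $\psi'(U)$ is a split unipotent subgroup of $G_{k_s}$; but $G_{k_s}$ omits $\Ga$ by Lemma \ref{containGasepble} applied to the separable extension $k_s/k$, while a nontrivial split unipotent group contains $\Ga$, whence $\psi'|_U = 0$. Therefore $\psi'$ descends to the desired $\psi\colon \Jac_{D_{k_s}}(\overline X_{k_s}) \to G_{k_s}$ with $\psi \circ i_x = f$.

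Finally I combine the two facts: $i_x^*$ is a closed immersion whose set-theoretic image is closed and contains all $k_s$-points of $\calMor((X,x),(G,1))^+$, and these are Zariski dense since that scheme is smooth, hence geometrically reduced, over $k$. Thus the image is everything, and a surjective closed immersion onto the reduced scheme $\calMor((X,x),(G,1))^+$ is an isomorphism. The main obstacle is precisely the surjectivity step, and within it the descent from $\Jac_{D'}$ to the prescribed $\Jac_D$: it is only the omission of $\Ga$ from $G$ (propagated to $k_s$ via Lemma \ref{containGasepble}), together with the split unipotence of the thickening kernel (Lemma \ref{thickendivisor}), that lets one discard the extra thickening and land in the generalized Jacobian attached to the given divisor $D$.
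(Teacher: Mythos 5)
Your proof is correct and follows essentially the same route as the paper: a monomorphism statement coming from the fact that $i_x$ generates $\Jac_D(\overline{X})$, surjectivity checked on $k_s$-points via Theorem \ref{albanese} together with Lemma \ref{thickendivisor} and Lemma \ref{containGasepble}, and the conclusion assembled from smoothness and Zariski density of $k_s$-points. The only real divergence is how the injectivity step is justified: the paper derives the generation statement internally from the uniqueness clause of Theorem \ref{albanese} (if $H \subsetneq \Jac_D(\overline{X})$ were the subgroup generated by $i_x$, the zero map $X \to \Jac_D(\overline{X})/H$ would admit two distinct factorizations), whereas you import the classical fact that the generalized Jacobian is generated by the Abel image and then, via the surjection $m\colon (X\times X)^N \to \Jac_D(\overline{X})$ and scheme-theoretic dominance preserved under the flat base change $k \to R$, obtain triviality of the kernel on $R$-points for \emph{every} $k$-algebra $R$. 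Your version spells out why generation over $k$ controls homomorphisms over an arbitrary base --- a point the paper's ``it follows that the map of functors is an inclusion'' leaves implicit --- at the cost of citing the classical generation result that the paper's trick renders unnecessary; both are legitimate, and the surjectivity argument (enlarge $D$ to $D'$, kill the split unipotent kernel because $G_{k_s}$ omits $\Ga$) is identical to the paper's.
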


\begin{proof}
First of all, we claim that the map $i_x$ generates $\Jac_{D}(\overline{X})$ as a $k$-group scheme. We could show this directly, but it is easier just to use Theorem \ref{albanese}, and in particular the uniqueness of the map $\psi$ in that proposition. Indeed, let $H \subset \Jac_{D}(\overline{X})$ be the $k$-subgroup generated by $i_x$. If $H \neq \Jac_{D}(\overline{X})$, then the zero map $X \rightarrow \Jac_{D}(\overline{X})/H$ admits two distinct factorizations through homomorphisms from $\Jac_{D}(\overline{X})$ -- namely, the zero map and the projection map $\Jac_{D}(\overline{X}) \rightarrow \Jac_{D}(\overline{X})/H$. This shows that in fact $i_x$ generates $\Jac_{D}(\overline{X})$, as claimed. It follows that the map of functors $\calHom(\Jac_{D}(\overline{X}), G) \rightarrow \calMor((X, x), (G, 1))$ induced by $i_x$ is an inclusion, hence the same holds for the induced map $i_x^*$ of subfunctors.

It only remains to show that $i_x^*$ is surjective as a map of $k$-groups. Because the group $\calMor((X, x), (G, 1))^+$ is smooth, it suffices to show that $i_x^*$ is surjective on $k_s$-points. That is, we must show that a morphism $f\colon (X_{k_s}, x) \rightarrow (G_{k_s}, 0)$ of pointed $k_s$-schemes extends to a $k_s$-group homomorphism $\Jac_D(\overline{X})_{k_s} \rightarrow G_{k_s}$. By Theorem \ref{albanese}, this is true (for the map $f$) if we replace $D$ by some possibly larger divisor $D'$ with the same support. By Lemma \ref{thickendivisor}, the natural map $\Jac_{D'}(\overline{X}) \rightarrow \Jac_D(\overline{X})$ has split unipotent kernel. Because $G$ admits no $k$-subgroup $k$-isomorphic to $\Ga$, the same holds over $k_s$ by Lemma \ref{containGasepble}. Therefore, the $k_s$-homomorphism $\Jac_{D'}(\overline{X})_{k_s} \rightarrow G_{k_s}$ factors through a homomorphism $\Jac_D(\overline{X})_{k_s} \rightarrow G_{k_s}$.
\end{proof}

\begin{corollary}
\label{albanesebase}
Let $X$ be a smooth curve over a field $k$, with regular compactification $\overline{X}$, and let $x \in X(k)$. Let $D \subset \overline{X}$ be a divisor with support $\overline{X}\backslash X$. Let $G$ be a finite type commutative $k$-group scheme not containing a $k$-subgroup scheme $k$-isomorphic to $\Ga$, and let $T$ be a geometrically reduced $k$-scheme. Then for any pointed $T$-morphism $f \colon (X_T, x_T) \rightarrow (G_T, 0_T)$, there is a unique $T$-group scheme homomorphism $\psi\colon \Jac_D(\overline{X})_T \rightarrow G_T$ such that the following diagram commutes:
\[
\begin{tikzcd}
X_T \arrow{r}{i_x} \arrow{dr}[swap]{f} & \Jac_{D}(\overline{X})_T \arrow[d, dashrightarrow, "\exists! \psi"] \\
& G_T
\end{tikzcd}
\]
\end{corollary}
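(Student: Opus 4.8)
The plan is to read off the corollary from Theorem~\ref{modulifromcurvesequalsfromjacobians} by evaluating the isomorphism of $k$-group schemes $i_x^*\colon \calHom(\Jac_{D}(\overline{X}), G)^+ \rightarrow \calMor((X, x), (G, 1))^+$ on $T$-points and using the defining agreement of the two $+$-functors with the honest functors on geometrically reduced schemes. First I note that a pointed $T$-morphism $f\colon (X_T, x_T) \rightarrow (G_T, 0_T)$ is by definition exactly a $T$-point of $\calMor((X, x), (G, 1))$. Since $X$ is a smooth curve (hence geometrically reduced and of finite type) equipped with the $k$-point $x$, and $G$ is a finite type commutative, hence solvable, $k$-group scheme containing no copy of $\Ga$, property~(i) of Theorem~\ref{modulipointedmorphismsgeneral} applies to the geometrically reduced $k$-scheme $T$ and gives $\calMor((X, x), (G, 1))^+(T) = \calMor((X, x), (G, 1))(T)$. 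Thus $f$ is in fact a $T$-point of $\calMor((X, x), (G, 1))^+$.

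Next, because $i_x^*$ is an isomorphism of $k$-group schemes, Yoneda's Lemma gives a bijection on $T$-points for every $k$-scheme $T$; in particular there is a unique $\psi \in \calHom(\Jac_{D}(\overline{X}), G)^+(T)$ with $i_x^*(\psi) = f$. Here I use that $\Jac_{D}(\overline{X})$ is a smooth commutative finite type $k$-group scheme and that $G$ is as above, so that property~(i) of Theorem~\ref{modulispacesofhoms} yields $\calHom(\Jac_{D}(\overline{X}), G)^+(T) = \calHom(\Jac_{D}(\overline{X}), G)(T)$; hence this $\psi$ is a genuine $T$-group scheme homomorphism $\Jac_{D}(\overline{X})_T \rightarrow G_T$. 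Unwinding the definition of $i_x^*$ as precomposition with $i_x$, the relation $i_x^*(\psi) = f$ says precisely that $\psi \circ (i_x)_T = f$, i.e.\ the displayed triangle commutes. The uniqueness of such a $\psi$ among all $T$-group homomorphisms follows from the injectivity of $i_x^*$ on $T$-points, once one observes (again by geometric reducedness of $T$ and property~(i) of Theorem~\ref{modulispacesofhoms}) that any $T$-homomorphism making the triangle commute automatically lies in $\calHom(\Jac_{D}(\overline{X}), G)^+(T)$.

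Since every step is formal, there is no genuine obstacle here: the substantive work has already been carried out in establishing the scheme-theoretic isomorphism of Theorem~\ref{modulifromcurvesequalsfromjacobians} together with the representability-and-agreement statements of Theorems~\ref{modulipointedmorphismsgeneral} and~\ref{modulispacesofhoms}. The only point requiring a moment's care is the bookkeeping: one must invoke the equality of the $+$-functor with the full $\calMor$- and $\calHom$-functors at both ends of $i_x^*$ (which is exactly where the hypothesis that $T$ be geometrically reduced enters), so that the purely functorial bijection supplied by the isomorphism $i_x^*$ translates into the asserted statement about actual morphisms and actual group homomorphisms over $T$.
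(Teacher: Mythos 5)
Your proposal is correct and takes exactly the route the paper intends: the corollary is stated without a separate proof precisely because it is the evaluation of the $k$-group scheme isomorphism of Theorem \ref{modulifromcurvesequalsfromjacobians} on $T$-points, which is what you carry out. Your write-up correctly supplies the only non-automatic bookkeeping, namely the two invocations of property (i) (of Theorems \ref{modulipointedmorphismsgeneral} and \ref{modulispacesofhoms}) identifying the $+$-subfunctors with the honest $\calMor$- and $\calHom$-functors on the geometrically reduced scheme $T$, both for existence and to see that an arbitrary competing homomorphism in the uniqueness step already lies in $\calHom(\Jac_{D}(\overline{X}), G)^+(T)$.
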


\appendix

\section{Non-solvable groups contain $\Ga$}

The purpose of this section is to prove that smooth, connected, non-solvable groups over separably closed fields contain a copy of $\Ga$. While we never use this result in the present paper, it explains the restriction to solvable groups in the main theorem \ref{maintheorem}.

We require a lemma.

\begin{lemma}
\label{torusautomoprhisms}
Let $k$ be a field, $T$ a $k$-torus, and $U$ a smooth connected unipotent $k$-group equipped with a $T$-action such that ${\rm{Lie}}(U)^T = 0$. Then for all $t \in T(k)$ outside of some closed subscheme $Z \subsetneq T$, the map $\phi_t\colon U \rightarrow U$ defined by the formula $u \mapsto (t\cdot u)u^{-1}$ is a $k$-scheme automorphism of $U$.
\end{lemma}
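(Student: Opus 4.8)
The plan is to reduce the statement to an explicit triangular back-substitution by choosing good coordinates on $U$. Since $T$ is a split torus and $U$ is split unipotent, the structure theory of torus actions on split unipotent groups (see \cite{cgp}) provides a $T$-stable filtration $1 = U_0 \subsetneq U_1 \subsetneq \dots \subsetneq U_n = U$ by smooth connected $k$-subgroups with each $U_i/U_{i-1}$ being $T$-equivariantly isomorphic to $\Ga$ carrying a linear $T$-action through a character $\chi_i$. Reading off the weights of $T$ on $\Lie(U)$ from this filtration and using the hypothesis $\Lie(U)^T = 0$, every $\chi_i$ is nontrivial. As a building block I would first record that for $U = \Ga$ with $T$ acting by a nontrivial character $\chi$, the map $\phi_t(u) = (t\cdot u)u^{-1} = (\chi(t)-1)u$ is an automorphism exactly when $\chi(t) \neq 1$, i.e.\ for $t$ outside the proper closed subgroup $\ker(\chi)$.

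Next I would promote this to all of $U$ by introducing coordinates. Because $T$ is linearly reductive, the coordinate ring $k[U]$ is a completely reducible $T$-representation, so the triangular coordinate functions coming from the filtration can be chosen to be $T$-eigenvectors. Concretely, I would produce $x_1, \dots, x_n \in k[U]$ giving a $k$-scheme isomorphism $U \xrightarrow{\sim} \A^n$ with $U_i = \{x_{i+1} = \dots = x_n = 0\}$, with group law of triangular shape $x_i(ab) = x_i(a) + x_i(b) + P_i(x_{<i}(a); x_{<i}(b))$, and with $x_i(t\cdot u) = \chi_i(t)\, x_i(u)$. The existence of such simultaneously triangular and diagonal coordinates (lifting an eigen-coordinate on each subquotient through the exactness of $T$-isotypic projection) is the technical heart of the argument.

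Granting these coordinates, the computation is immediate and forms the payoff. Using the triangular group law together with $x_i(u^{-1}) = -x_i + Q_i(x_{<i})$, the $i$-th coordinate of $\phi_t(u) = (t\cdot u)u^{-1}$ works out to
\[
x_i(\phi_t(u)) = (\chi_i(t) - 1)\, x_i + R_i(x_1, \dots, x_{i-1}),
\]
where $R_i$ depends only on the lower coordinates (and on $t$). Thus, setting $Z := \bigcup_{i} \ker(\chi_i)$ --- a proper closed subscheme of $T$, being a finite union of proper closed subgroups of the irreducible scheme $T$ --- for $t \notin Z$ every leading coefficient $\chi_i(t) - 1$ lies in $k^{\times}$, and $\phi_t$ is a triangular polynomial endomorphism of $\A^n$ with invertible diagonal. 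Such a map is inverted by back-substitution: one solves for $x_1$, then $x_2$ in terms of $x_1$, and so on, producing a two-sided inverse morphism defined over $k$. Hence $\phi_t$ is a $k$-scheme automorphism of $U$ for all $t \in T(k)$ outside $Z$, as desired.

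I expect the main obstacle to be the coordinate setup in the second paragraph --- namely arranging a single coordinate system that is at once triangular for the group law and diagonal for the $T$-action; once this is in place, both the nontriviality of the characters (from $\Lie(U)^T = 0$) and the final inversion are routine.
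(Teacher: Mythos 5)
Your overall strategy---d\'evissage to the rank-one computation $\phi_t(u) = (\chi(t)-1)u$ on $\Ga$-subquotients, then excluding the finite union $\bigcup_i \ker(\chi_i)$---is the same one the paper uses, but the paper implements it without coordinates: it inducts on $\dim(U)$ using a $T$-stable \emph{central} extension $1 \to U' \to U \to U'' \to 1$ with $U' \simeq \Ga$, notes that the hypothesis $\Lie(U)^T = 0$ passes to $U'$ and $U''$, and then observes that $\phi_t$ being a scheme automorphism of $U'$ and of $U''$ \emph{formally} forces it to be one of $U$: centrality of $U'$ gives $\phi_t(uu') = \phi_t(u)\,\phi_t(u')$ for $u' \in U'$, so $\phi_t$ is a morphism of $U'$-torsors over $\phi_t|_{U''}$, twisted by the automorphism $\phi_t|_{U'}$. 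Your base case, your derivation of nontriviality of the $\chi_i$ from $\Lie(U)^T = 0$, and your final back-substitution step are all fine; the difference is that you route everything through an explicit coordinate system that is simultaneously triangular for the group law and diagonal for the $T$-action, and that is where your argument has a genuine gap.

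The gap is in the step you yourself flag as the technical heart: producing the coordinates by ``lifting an eigen-coordinate on each subquotient through the exactness of $T$-isotypic projection.'' Exactness of the isotypic decomposition does let you lift eigenvectors of $k[U_{i}]$ to eigenvectors of $k[U]$ of the same weight, but such a lift need not give a coordinate system at all, let alone a triangular one. Concretely, take $U = \Ga^2$ with coordinates $(a,b)$, $T = \Gm$ acting linearly by $t\cdot(a,b) = (ta,\, t^{1-p}b)$ (all weights nontrivial, so $\Lie(U)^T = 0$), and $U_1 = \{b=0\}$ with eigen-coordinate $x_1 = a$ of weight $1$. Then $\tilde{x}_1 := a + a^p b$ is also a weight-$1$ eigenvector of $k[U]$ restricting to $x_1$ on $U_1$, yet $(a,b) \mapsto (\tilde{x}_1, b)$ is a finite degree-$p$ map over the locus $b \neq 0$ (it is even \'etale there, since $a^p b$ has vanishing $a$-derivative in characteristic $p$), hence is not an automorphism of $\A^2$. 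So the isotypic projection selects lifts, but not lifts with the property you need; to repair this you must lift \emph{group-theoretic} structure rather than functions---e.g.\ produce $T$-equivariant sections of the successive torsors $U_i \to U_i/U_{i-1}$, or invoke the linearization theory for torus actions on unipotent groups as in \cite[\S 2.1, 2.3]{cgp}---and, relatedly, your $T$-stable filtration must be \emph{central} (not merely normal) with linearized $\Ga$-quotients for the group law to have the triangular shape you use, which is itself a nontrivial input requiring a precise reference. None of this is fatal, since the coordinate statement is true; but as written it is not proved, and the paper's inductive, torsor-theoretic argument is exactly the device that avoids having to prove it.
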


\begin{proof}
We may extend scalars to $k_s$ and thereby assume that $T$ is split. In particular, the action of $T$ on ${\rm{Lie}}(U)$ breaks up as a direct sum of characters. We first treat the case in which $U$ is commutative. Let $Z\subset T$ be the union of the kernels of the finitely many characters appearing in the $T$-representation ${\rm{Lie}}(U)$. Because these characters are all nontrivial by hypothesis, $Z \neq T$. We claim that $\phi_t$ is an automorphism of $U$ for $t \in (T\backslash Z)(k)$.

Indeed, since $U$ is smooth and connected, and $\phi_t$ is a $k$-homomorphism (because $U$ is commutative), it is equivalent to show that $U^t = \ker(\phi_t)$ is trivial. Let $S \subset T$ be the Zariski closure of the subgroup generated by $t$. Then $U^S = U^t$, and by \cite[Props.\,2.1.12(3), A.8.10(2)]{cgp}, $U^S$ is smooth and connected. Furthermore, ${\rm{Lie}}(U^S) = {\rm{Lie}}(U)^S = 0$, the last equality because the characters of $T$ appearing in ${\rm{Lie}}(U)$ are all nontrivial when applied to $t \in S$. It follows that $U^S = 0$. This completes the proof for commutative $U$.

Now we treat the general case by induction on ${\rm{dim}}(U)$. The case ${\rm{dim}}(U) = 0$ is trivial, so assume that $U$ is nontrivial. Because $U$ is nilpotent, for some $n \geq 0$ the $n$th central subgroup $U' := \mathscr{D}_n(U)$ is a nontrivial smooth, connected, central, {\em characteristic} $k$-subgroup of $U$. Because it is characteristic, it is invariant under the $T$-action on $U$. Let $U'' := U/U'$. Our assumption that ${\rm{Lie}}(U)$ has no nonzero $T$-invariants is inherited by $U'$ and $U''$. Therefore, by induction and the already-treated commutative case, there are closed subschemes $Z', Z'' \subsetneq T$ such that $\phi_t$ induces $k$-scheme automorphisms of $U'$, respectively $U''$, for $t \notin Z'$, respectively $Z''$. It follows formally that $\phi_t$ is a scheme-theoretic automorphism of $U$ for $t \notin Z := Z' \cup Z''$.
\end{proof}

\begin{proposition}
\label{nonsolvablecontainsGa}
A smooth connected non-solvable group $G$ over a separably closed field $k$ admits an injective $k$-group homomorphism from $\Ga$.
\end{proposition}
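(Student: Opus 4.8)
The plan is to reduce the statement to the production of a single nontrivial smooth connected unipotent $k$-subgroup of $G$. Indeed, over the separably closed field $k$ any such subgroup is split \cite[Cor.\,B.2.7]{cgp}, so it contains a copy of $\Ga$, and composing inclusions yields the desired $\Ga \hookrightarrow G$. First I would reduce to the case that $G$ is affine: by Chevalley's Theorem $G$ is an extension $1 \to L \to G \to A \to 1$ of an abelian variety $A$ by a smooth connected affine $k$-group $L$, and since $A$ is commutative (hence solvable), while an extension of a solvable group by a solvable group is solvable, the non-solvability of $G$ forces $L$ to be non-solvable. As a copy of $\Ga$ in $L$ is one in $G$, I may assume $G = L$ is linear.

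Next I would fix a maximal torus $T \subseteq G$, which is split since $k$ is separably closed. A Cartan subgroup $Z_G(T)$ is nilpotent, hence solvable, so the non-solvability of $G$ gives $Z_G(T) \subsetneq G$ and therefore $\dim \Lie(G)^T = \dim Z_G(T) < \dim G$. Thus the adjoint action of $T$ on $\Lie(G)$ has at least one nonzero weight. Choosing a cocharacter $\lambda \colon \Gm \to T$ in sufficiently general position, I would form the associated contracting subgroup $U := U_G(\lambda) = \{\, g \in G : \lim_{s \to 0} \lambda(s)\,g\,\lambda(s)^{-1} = 1 \,\}$, a smooth connected unipotent $k$-subgroup of $G$ normalized by $T$ whose Lie algebra is the sum of the positive-weight spaces of $\lambda$. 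For generic $\lambda$ this Lie algebra is nonzero, so $U \ne 1$, and since every weight occurring in it is nonzero we have $\Lie(U)^T = 0$.

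The pair $(T, U)$ now satisfies the hypotheses of Lemma \ref{torusautomoprhisms}: $U$ is split unipotent, carries a $T$-action, and $\Lie(U)^T = 0$. The lemma therefore applies, and for $t \in T(k)$ outside a proper closed subscheme the map $\phi_t \colon u \mapsto (t \cdot u) u^{-1}$ is a scheme automorphism of $U$. In particular $\phi_t$ is surjective, so every point of $U$ has the form $(t\cdot u)u^{-1} = [t, u]$; as the $T$-action is conjugation inside $G$, this exhibits $U$ as lying in the derived group $\mathscr{D}G$ (consistent with non-solvability) and confirms that $U$ is a genuine nontrivial split unipotent subgroup of $G$. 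Such a $U$ contains a copy of $\Ga$, which completes the proof.

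I expect the main obstacle to be the construction of the unipotent subgroup $U$ in positive characteristic together with the verification that $\Lie(U)^T = 0$. The difficulty is concentrated in the reductive case: when $R_{u}(G) = 1$ the group $G$ is reductive and, being non-solvable, has positive semisimple rank, so the sought-after $\Ga$ cannot be found among normal subgroups and must instead be extracted from a non-normal unipotent subgroup such as $U_G(\lambda)$ (equivalently, the unipotent radical of a Borel). Controlling the torus action on this subgroup---precisely the invariant-free condition $\Lie(U)^T = 0$---is what makes Lemma \ref{torusautomoprhisms} available and drives the argument.
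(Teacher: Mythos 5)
There is a genuine gap: your proof only treats the case of a \emph{perfect} field, while the proposition is stated over a separably closed field, which in characteristic $p$ may well be imperfect --- and that imperfect case is precisely where all the difficulty lies. Your very first reduction invokes Chevalley's Theorem, but Chevalley's Theorem requires $k$ perfect; over an imperfect (even separably closed) field, a smooth connected group need not be an extension of an abelian variety by a smooth connected affine subgroup. The structure result that is available in characteristic $p$ (\cite[Th.\,A.3.9]{cgp}, and the one the paper uses) goes the other way: $G$ is a \emph{central} extension $1 \to S \to G \xrightarrow{\pi} L \to 1$ with $S$ a semiabelian \emph{subgroup} and $L$ a smooth connected affine \emph{quotient}. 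So the affine piece sits on the wrong side for your argument: a split unipotent subgroup $U \subseteq L$ produced by the dynamic/open-cell method is a subgroup of the quotient, not of $G$, and one must still show that the central extension splits over $\pi^{-1}(U)$. That lifting is the real content of the paper's proof: it uses the multiplicativity of the $S$-torsor $\alpha$ given by $G \to L$, applies Lemma \ref{torusautomoprhisms} to the map $\phi_t(u) = tut^{-1}u^{-1}$ to conclude $\phi_t^*(i_U^*\alpha) = 0$ and hence $i_U^*\alpha = 0$ (a scheme-theoretic section), and then upgrades this to a group-theoretic section by a cocycle argument resting on the constancy of maps from affine spaces to abelian varieties and on Rosenlicht's unit theorem. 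None of this appears in your proposal; your only use of Lemma \ref{torusautomoprhisms} (to place $U$ inside $\mathscr{D}G$) is superfluous, since in the genuinely affine case the proof is already complete once $U \neq 1$ is exhibited.

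A secondary but related error: your opening reduction asserts that over a separably closed field every nontrivial smooth connected unipotent subgroup is split, citing \cite[Cor.\,B.2.7]{cgp}. That corollary is about \emph{perfect} fields. Over an imperfect separably closed field there exist nontrivial wound unipotent groups (woundness is preserved by separable extensions such as $k \mapsto k_s$, \cite[Prop.\,B.3.2]{cgp}), and these contain no copy of $\Ga$ whatsoever. This particular slip happens to be harmless for your construction, because the dynamic subgroup $U_G(\lambda)$ is split unipotent by \cite[Prop.\,2.1.10]{cgp}, but it reflects the same underlying issue: separably closed does not imply perfect, and the proposition is essentially trivial to prove in the perfect case. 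Your argument, as written, is correct over perfect fields and is essentially the paper's ``perfect case'' paragraph; what is missing is everything the paper does afterwards.
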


\begin{proof}
Let us first recall the open cell decomposition associated to a split maximal torus. Let $H$ be a smooth connected affine $k$-group scheme, $T \subset H$ a (split) torus, and choose a cocharacter $\lambda\colon \Gm \rightarrow T$ such that $Z_H(\lambda) = Z_H(T)$. (All cocharacters lying in the complement of a finite set of hyperplanes in the cocharacter lattice  of $T$ have this property.) Then we have the open cell decomposition associated to $\lambda$: there are split unipotent $k$-subgroups $U^+, U^- \subset H$ that are normalized by $T$, and such that the multiplication map
\[
U^- \times Z_H(T) \times U^+ \rightarrow H
\]
is an open immersion \cite[Lemma 2.1.5, Props.\,2.1.10, 2.1.8(3)]{cgp}.

Now consider the case in which $k$ is perfect. Chevalley's Theorem furnishes an exact sequence
\[
1 \longrightarrow L \longrightarrow G \longrightarrow A \longrightarrow 1
\]
with $L$ smooth, connected, and affine, and $A$ an abelian variety. Over perfect fields, therefore, the proposition reduces to the affine case. So assume that $G$ is affine, and choose a maximal torus $T \subset G$. Since $T$ is maximal, $Z_G(T)/T$ admits no nontrivial torus, and is therefore unipotent \cite[Ch.\,IV, Cor.\,11.5(2)]{borelalggroups}. It follows that $Z_G(T)$ is solvable. Since $G$ is not solvable, we therefore have that $G \neq Z_G(T)$. Therefore, in the open cell decomposition associated to a ``generic'' cocharacter of $T$, the groups $U^+$ and $U^-$ cannot both be trivial. So $G$ contains a nontrivial split unipotent $k$-subgroup, hence contains a copy of $\Ga$. This completes the proof when $k$ is perfect.

Now assume that ${\rm{char}}(k) = p > 0$ (which in particular includes the imperfect case). Then $G$ may be written as a central extension
\begin{equation}
\label{nonsolvablecontainsGapfeqn1}
1 \longrightarrow S \longrightarrow G \xlongrightarrow{\pi} L \longrightarrow 1
\end{equation}
with $S$ a semiabelian variety and $L$ smooth, connected, and affine \cite[Th.\,A.3.9]{cgp}. The non-solvability of $G$ implies the same for $L$. The left multiplication action of $S$ on $G$ makes $G$ into an $S$-torsor over $L$. Further, it is a {\em multiplicative} $S$-torsor: it lies in the kernel of the map $$m^* - \pi_1^* - \pi_2^*\colon {\rm{H}}^1(L, S) \rightarrow {\rm{H}}^1(L \times L, S),$$ where $m, \pi_i\colon L \times L \rightarrow L$ are the multiplication and projection maps. 

Let $T \subset L$ be a maximal torus, and let $U^+$ and $U^-$ denote the split unipotent $k$-subgroups of $L$ appearing in the open cell decomposition associated to a ``generic'' cocharacter of $T$, as discussed above. Because $L$ is not solvable, at least one of these two groups is nontrivial. We will show that, for $U = U^+$ or $U^-$, the sequence (\ref{nonsolvablecontainsGapfeqn1}) splits when restricted to $\pi^{-1}(U) \subset G$. Consider for $t \in T(k)$ the sequence of maps
\[
U \xrightarrow{\gamma} T \times U \xrightarrow{\psi} L,
\]
where the first map is $u \mapsto (t, u)$ and the second is $(t, u) \mapsto tut^{-1}u^{-1}$. Because $U$ is normalized by $T$, this second map lands inside $U$. We claim that, for suitable $t \in T(k)$, the composed map $\phi_t\colon U \rightarrow U$, $u \mapsto tut^{-1}u^{-1}$, is a $k$-scheme automorphism. Indeed, we have ${\rm{Lie}}(Z_L(T)) = {\rm{Lie}}(L)^T$, where the invariants are with respect to the adjoint action of $T$. It follows that ${\rm{Lie}}(U)^T = 0$, so the claim follows from Lemma \ref{torusautomoprhisms}.

The multiplicativity of the $S$-torsor $\alpha$ given by $G \rightarrow L$ implies that the pullback of $\alpha$ along the product of maps is the sum of the pullbacks: given a scheme $X$ and maps $f, g \colon X \rightarrow L$, one has 
\[
(fg)^*(\alpha) = f^*(\alpha) + g^*(\alpha).
\] 
Indeed, this follows by factoring the map $fg\colon X \rightarrow L$ as the composition
\[
X \xrightarrow{f \times g} L \times L \xrightarrow{m} L,
\]
and then using the multiplicativity of $\alpha$ to conclude that
\begin{align*}
(fg)^*(\alpha) = (m \circ (f \times g))^*\alpha = (f \times g)^*(m^*(\alpha)) = (f \times g)^*(\pi_1^*(\alpha)) + (f \times g)^*((\pi_2)^*(\alpha)) \\
= (\pi_1 \circ (f \times g))^*(\alpha) + (\pi_2 \circ (f \times g))^*(\alpha) = f^*(\alpha) + g^*(\alpha).
\end{align*}
An easy induction shows that the same holds for any finite product of maps into $L$. Applying this when $X = L$ and $f = g$ is the constant map to $1_L$, we deduce that the restriction of $\alpha$ to the identity is trivial. Applying it when $f$ is the identity map of $L$ and $g = [-1]$ the inversion map of $L$, we deduce that $[-1]^*(\alpha) = -\alpha$.

In particular, because of the definition of $\psi$ as a product of maps and their inverses, $\psi^*(\alpha) = i_T^*\alpha + i_U^*\alpha - i_T^*\alpha - i_U^*\alpha = 0$, where $i_T\colon T \rightarrow L$ is the inclusion and $i_U$ is defined similarly. Pulling back further along $\gamma$, we find that $\phi_t^*(i_U^*(\alpha)) = 0$. Because $\phi_t$ is a $k$-scheme automorphism of $U$, we deduce that $i_U^*\alpha = 0$. That is, $\pi^{-1}(U) \rightarrow U$ admits a scheme-theoretic section. In order to complete the proof of the proposition, we must show that there is a group-theoretic section.

Write $\pi^{-1}(U) = S \times U$ as $S$-torsors over $U$. Since $S \subset \pi^{-1}(U)$ is central, the group law on $\pi^{-1}(U)$ then takes the form $(s_1, u_1)\cdot(s_2, u_2) = (s_1 + s_2 + h(u_1, u_2), u_1u_2)$ for some $h\colon U \times U \rightarrow S$. We claim that $f(u, v) = f(u, 1) + f(1, v) - f(1,1)$ for any map $f \colon U \times U \rightarrow S$ into a semiabelian variety. Indeed, letting $g(u, v) := f(u, v) - f(u, 1) - f(1, v) + f(1, 1)$, we have that $g(u, 1) = 0$ and $g(1, v) = 0$. It therefore suffices (is equivalent, in fact) to check that any $g$ satisfying these conditions is the $0$ map. Writing $S$ as an extension of an abelian variety by a torus, we are reduced to checking the assertion separately in the abelian variety and torus cases. In the abelian variety case, it follows from the fact that any map from affine space to an abelian variety is constant. In the torus case, it follows from the fact that affine space has no nonconstant units.

We therefore see that $h(u, v) = h(u, 1) + h(1, v) - h(1, 1)$. So we may write $h(u, v) = f_1(u) + f_2(v) + s$ for some $s \in S(k)$, and by modifying $s$ we may assume that $f_1(1) = f_2(1) = 0$. The associative law on $\pi^{-1}(U)$ yields the identity
\[
h(u, v) + h(uv, w) = h(v, w) + h(u, vw),
\]
or
\[
f_2(v) + f_1(uv) = f_1(v) + f_2(vw).
\]
Setting $v = 1$ yields $f_1(u) = f_2(w)$. Therefore, $f_1$ and $f_2$ are constant, hence so too is $h$, say $h = s_0 \in S(k)$. Then the map $U \rightarrow S \times U \simeq \pi^{-1}(U)$ given by $u \mapsto (-s_0, u)$ is a group-theoretic section to the map $\pi^{-1}(U) \rightarrow U$, as desired.
\end{proof}

\section{Explanation of hypotheses in the main theorem}
\label{explanationofhypsection}

The main result of the present paper, Theorem \ref{maintheorem}, asserts the existence of suitable moduli spaces of morphisms from schemes into algebraic groups. The theorem imposes certain assumptions on the schemes and the groups, as well as on the category on which the moduli space represents the functor of morphisms. The purpose of this section is to explain why these assumptions are the natural ones.

Let $X$ be a $k$-scheme of finite type, and let $G$ be a smooth connected $k$-group scheme. In general, it is completely hopeless to expect there to be a scheme representing (on any reasonably large category) the functor of morphisms from $X$ to $G$. For example, if we take $G = \Ga$, then, for a $k$-scheme $T$, a $T$-morphism from $X_T$ to $\mathbf{G}_{a,\,T}$ is just a global section of $X_T$. The collection of such sections for affine $X$ of positive dimension will typically be far too large to be representable; for example, if $X = \A^1$, then it is just the space of all polynomials with coefficients in $\Gamma(T, \calO_T)$. Without some sort of boundedness condition on these polynomials -- say, requiring the degree to be bounded -- this space is too large to represent by a scheme. Thus we should assume that our group $G$ does not contain a copy of $\Ga$. If, for example, $G = U$ is unipotent, then this is equivalent to requiring that $U$ be wound. 

But the problem with groups containing $\Ga$ causes a corresponding problem with the category of all $k$-schemes: it is simply too large. Indeed, while woundness is a property inherited by separable extensions of the field $k$ \cite[Prop.\,B.3.2]{cgp}, it is not generally inherited by inseparable extensions. In fact, over a perfect field every smooth connected unipotent group is split \cite[Thm.\,15.4(iii)]{borelalggroups}, so a nontrivial wound unipotent group acquires subgroups isomorphic to $\Ga$ over some finite purely inseparable extension. It is therefore hopeless, if one wishes to construct moduli spaces of morphisms into unipotent groups in any interesting generality, to construct them on the category of all $k$-schemes, or even those of finite type. Instead, we must restrict ourselves to those whose structure may be probed by separable points. A natural candidate is the category of smooth $k$-schemes. We instead deal with the somewhat larger category of geometrically reduced $k$-schemes.

In addition, we are required to impose certain restrictions on the source scheme $X$. Indeed, if $X$ is non-reduced, then there can be many morphisms from $X$ into the infinitesimal subgroups of $U$. For example, if $X = \A^1_Y$ with $Y$ non-reduced, and $U$ is a $k$-form of $\Ga$, then the kernel of the Frobenius isogeny of $U$ is isomorphic to $\alpha_p$, and a morphism from $X$ into $\alpha_p$ consists of a polynomial with coefficients lying in the ideal of $p$-nilpotents of $Y$ (i.e., those elements of $\Gamma(Y, \calO_Y)$ whose $p$th power is $0$). Again, without some bound on the degree of these polynomials there is no hope to obtain representability for so large a space. We are therefore led to assume in addition that $X$ is reduced.

Finally, the $\Ga$ problem also entails an additional restriction on the scheme $X$ beyond mere reducedness. Suppose that $K/k$ is a non-separable extension field, and $G = U$ is a wound unipotent group which acquires a $\Ga$ subgroup over $K$. Then for a $K$-scheme $Y$, a $k$-morphism into $U$ is the same as a $K$-morphism into $U\otimes_k K$. In particular, if $X$ is an integral $k$-scheme with function field $K$, then once again we run into the problem that the space of $k$-morphisms from $X$ into $U$ is usually too large to be representable (because the $\Ga$ inside $U_K$ will spread out to a $\Ga$ inside $U_Y$ for some open subscheme $Y$ of $X$). We therefore require that the function field of each irreducible component of the reduced $X$ be separable over $k$. Equivalently, we must (typically) restrict our attention to the situation in which $X$ is not merely reduced, but {\em geometrically} reduced in order to hope for representability of the space of morphisms from $X$ into $U$.

To summarize, the natural context in which it is reasonable to expect the functor sending a $k$-scheme $T$ to the group $\Mor_T(X_T, G_T)$ to be representable by a scheme is when we restrict $T$ to lie in the category of geometrically reduced $k$-schemes, $X$ to be geometrically reduced, and $G$ not to contain a copy of $\Ga$. Since we are working on the category of all geometrically reduced $k$-schemes, we in fact do not merely want $G$ to contain a copy of  $\Ga$ over $k$, but we wish for this to remain true over $k_s$. For smooth connected $G$, this implies that $G$ is solvable (Proposition \ref{nonsolvablecontainsGa}), hence the solvability assumption in Theorem \ref{maintheorem}.

\noindent \address
\vspace{.3 in}

\noindent \email

\end{document}